\documentclass[10pt]{amsart}
\addtolength\headheight{4pt}
\usepackage{amsfonts}
\usepackage{amssymb}
\usepackage{color}
\usepackage{amsmath}
\usepackage{bm}
\usepackage{graphicx}
\usepackage{float}
\usepackage[all,cmtip]{xy}
\setlength\parindent{0pt}
\setlength{\oddsidemargin}{5pt} \setlength{\evensidemargin}{5pt}
\setlength{\textwidth}{440pt}
\setlength{\topmargin}{-50pt}
\setlength{\textheight}{24cm}
\setlength{\parskip}{2.5mm plus0.4mm minus0.4mm}
\newenvironment{wideitemize}
{ \begin{itemize}
    \setlength{\itemsep}{3pt}
    \setlength{\parskip}{0pt}
    \setlength{\parsep}{0pt}     }
{ \end{itemize}                  } 
\newcommand{\D}{{\mathbb{D}}}
\renewcommand{\S}{{\mathbb{S}}}
\newcommand{\R}{{\mathbb{R}}}

\newcommand{\Rot}{\operatorname{Rot}}

\newcommand{\Engel}{{\mathfrak{E}}}

\newcommand{\Flags}{{\mathfrak{F}}}

\newcommand{\SW}{{\mathcal{W}}}
\newcommand{\SD}{{\mathcal{D}}}
\newcommand{\SE}{{\mathcal{E}}}

\newcommand{\SH}{{\mathcal{H}}}

\newcommand{\FF}{{\mathfrak{F}}}

\newcommand{\SF}{{\mathcal{F}}}

\newcommand{\SL}{{\mathcal{L}}}

\newcommand{\NS}{{\mathbb{S}}}

\newcommand{\Op}{{\mathcal{O}p}}

\newcommand{\Id}{{\operatorname{Id}}}

\newcommand{\wt}{\widetilde}

\newtheorem{proposition}{Proposition}
\newtheorem{theorem}[proposition]{Theorem}
\newtheorem{definition}[proposition]{Definition}
\newtheorem{lemma}[proposition]{Lemma}

\newtheorem{corollary}[proposition]{Corollary}

\newtheorem{example}[proposition]{Example}

\begin{document}

\title{Existence $h$--principle for Engel structures}

\author{Roger Casals}
\address{Instituto de Ciencias Matem\'aticas CSIC-UAM-UC3M-UCM, C. Nicol\'as Cabrera, 13-15, 28049 Madrid, Spain}
\email{casals.roger@icmat.es}

\author{Jos\'e Luis P\'erez}
\address{Instituto de Ciencias Matem\'aticas CSIC-UAM-UC3M-UCM, C. Nicol\'as Cabrera, 13-15, 28049 Madrid, Spain}
\email{joseluis.perez@icmat.es}

\author{\'Alvaro del Pino}
\address{Instituto de Ciencias Matem\'aticas CSIC-UAM-UC3M-UCM, C. Nicol\'as Cabrera, 13-15, 28049 Madrid, Spain}
\email{alvaro.delpino@icmat.es}

\author{Francisco Presas}
\address{Instituto de Ciencias Matem\'aticas CSIC-UAM-UC3M-UCM, C. Nicol\'as Cabrera, 13-15, 28049 Madrid, Spain}
\email{fpresas@icmat.es}

\subjclass[2010]{Primary: 53A40, 53D35.}
\date{\today}

\begin{abstract}
In this article we prove that the inclusion of the space of Engel structures of a smooth $4$--fold into the space of full flags of its tangent bundle induces surjections in all homotopy groups. In particular, we construct Engel structures representing any given full flag.
\end{abstract}

\maketitle

\section{Introduction}\label{sec:intro}
In this section we state the main result of the article, an existence h--principle for Engel structures. Let us first introduce the relevant mathematical context for this result and the necessary definitions in order to give a precise statement.

\subsection{Motivation}

Let $M$ be a smooth $n$--dimensional manifold. An $m$--distribution on $M$ is a smooth correspondence assigning an $m$--dimensional subspace in $T_pM$ to each point $p \in M$ -- equivalently, it is a smooth section of the grassmanian bundle $\mbox{Gr}_m(TM)\longrightarrow M$. The group of diffeomorphisms of the manifold acts in the space of distributions by push--forward, and E. Cartan studied conditions for this action to be locally transitive at the level of germs \cite{Car,Mo}. The distributions for which this action is locally transitive are called topologically stable, and he proved that such distributions necessarily conform to the inequality
$$  m (n-m) \leq n.$$

E.~Cartan's theorem states that there are only four classes of distributions that form an open subset in the space of distributions and are topologically stable. These are line fields, contact structures, even--contact structures and Engel structures (see \cite{VG} for a more detailed account). The common feature of these distributions is that they do not possess local invariants. The existence or non--existence of geometric global invariants is therefore a crucial factor for them to be mathematically relevant. This is the reason why the global theory of line fields -- Dynamical Systems -- is so rich. Likewise, the study of global invariants of contact structures has developed into the field of Contact Topology. That is, both line fields and contact structures posses geometric global invariants. In contrast, even--contact structures satisfy a complete $h$--principle (see \cite[Theorem 14.2.3]{EM},\cite{Gi,McD}) and hence their global structure is determined by their formal algebraic topology invariants, making them uninteresting from a Differential Geometry perspective.

Engel structures still await for an answer: it is not known whether they can classified only in terms of the underlying algebraic topology. In case the answer is affirmative, there will be no Engel geometry. If instead their classification is finer, Engel geometry should rise as a relevant area within Differential Geometry, as Contact Geometry has since the discovery of global contact invariants (see \cite{El} for a summary of the history). This article focuses on the study of Engel structures, which we now define.

Let $M$ be a smooth $4$--fold, an \textbf{Engel structure} $\SD$ is a maximally non--integrable $2$--distribution, i.e.~a 2--distribution $\SD\subseteq TM$ conforming to the following two properties:
\begin{enumerate}
\item $[ \SD, \SD] = \SE$ is a $3$--distribution,
\item $[ \SE, \SE] = TM$.
\end{enumerate}
By definition, the second condition is equivalent to the 3--distribution $\SE$ being even--contact. The kernel of the bilinear map
$$ [\quad,\quad]: \SE \times \SE \to TM/\SE$$
is a line field $\SW \subseteq \SE$ which also satisfies that $\SW \subseteq \SD$. Indeed, otherwise we would have a point $p\in M$ where $\SW_p \oplus \SD_p = \SE_p$ and therefore $[\SD_p, \SD_p] = [\SE_p, \SE_p] = T_pM$, a contradiction.

The bundle $\SE$ is naturally oriented: we can choose a (local) frame $\{X,Y\}$ for the 2--distribution $\SD$ and then the ordered set $\{X,Y,[X,Y]\}$ induces an orientation of $\SE$ which is independent of any choice.

Then we define a \textbf{formal Engel structure} to be a full flag $\SW^1 \subseteq \SD^2 \subseteq \SE^3 \subseteq TM$ in the tangent space together with a fixed orientation on $\SE$. Such a flag is denoted by $(\SW, \SD, \SE)$, and the chosen orientation of $\SE$ is implied.

Consider an oriented Engel distribution $\SD^2$ in an oriented 4--fold $M$. Then $M$ is parallelizable and the flag $\SW^1 \subseteq \SD^2 \subseteq \SE^3 \subseteq TM$ induced by the Engel structure $\SD$ is both oriented and co--oriented.

It was proven by T.~Vogel \cite{Vo} that the necessary condition of $M$ being parallelizable is also sufficient for it to admit an Engel structure. His proof deeply uses the interaction between Engel structures and contact structures. In brief, a result of D.~Asimov states that a parallelizable manifold admits a round handle decomposition, and T.~Vogel's argument uses this to reduce the existence of an Engel structure to the construction of appropriate Engel structures with contact boundary on every round handle.

T.~Vogel's result provides a partial answer to the question posed in \cite[Intrigue F2]{EM}: do Engel structures in closed parallelizable $4$--folds satisfy an $h$--principle? The theorem in \cite{Vo} implies that any parallelizable $4$--fold admits an Engel structure, yet this is still far from an existence $h$--principle: T.~Vogel's method yields limited control of the homotopy type of the full flag associated to the resulting Engel structure. This article improves the result in this direction.

\subsection{Statement of the results}
Let $M$ be a $4$--fold, and consider the space of formal Engel structures $\Flags(M)$ and the space of Engel structures $\Engel(M)$ defined as
\[ \Flags(M) = \{ (\SW^1,\SD^2,\SE^3) \text{ $|$ } \SW \subset \SD \subset \SE \subset TM \text{ is a full flag and $\SE$ is oriented } \} \]
\[ \Engel(M) = \{ (\SW^1,\SD^2,\SE^3) \text{ $|$ } \SD \text{ is Engel with } \SW \subset \SD \subset \SE \text{ its associated full flag } \} \]
and endowed with the compact--open topology; observe that there is an inclusion $i: \Engel(M) \to \Flags(M)$.

The main result of this article concerns the behaviour of the inclusion in all homotopy groups:

\begin{theorem} \label{thm:main}
The map $\pi_k(i): \pi_k\Engel(M)\longrightarrow\pi_k\Flags(M)$ is surjective for every $k \geq 0$. In particular, every formal Engel structure is homotopic to the flag of a genuine Engel structure.
\end{theorem}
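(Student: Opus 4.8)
The plan is to prove the following parametric--and--relative statement, from which the theorem follows: for every $k\ge 0$, every map of pairs $(D^k,\partial D^k)\to(\Flags(M),\Engel(M))$ is homotopic, rel $\partial D^k$, to a map into $\Engel(M)$. Indeed, a class in $\pi_k\Flags(M)$ based at a genuine Engel flag is represented by such a pair map (with $\partial D^k$ collapsed to the basepoint), and deforming it into $\Engel(M)$ produces a preimage in $\pi_k\Engel(M)$; the case $k=0$ is the final sentence. So fix a family $\{(\SW_t,\SD_t,\SE_t)\}_{t\in D^k}$ of formal Engel structures, genuine over $\partial D^k$.

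\emph{First reduction: normalise the top of the flag.} The distribution $\SE$ of a genuine Engel structure is exactly an even--contact structure whose characteristic line field is $\SW$ and which contains $\SD$. Even--contact structures satisfy a complete, parametric and relative $h$--principle (\cite[Theorem 14.2.3]{EM}, \cite{Gi, McD}), and the formal Engel data $(\SW_t,\SD_t,\SE_t)$ furnishes the formal solution of the corresponding relative problem. Applying it, we homotope the family rel $\partial D^k$ so that each $\SE_t$ is a genuine even--contact structure whose characteristic line field is $\SW_t$, still with $\SW_t\subseteq\SD_t\subseteq\SE_t$.

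\emph{Second reduction: the Engel condition along the characteristic foliation.} Fix such an $\SE_t$ and a $2$--plane field $\SD$ with $\SW_t\subseteq\SD\subseteq\SE_t$, and choose local generators $W$ of $\SW_t$ and $\ell$ of a line complementary to $\SW_t$ in $\SD$. One computes that $[\SD,\SD]$ is spanned by $\SD$ and $[W,\ell]$; that $[W,\ell]\in\SE_t$, because the characteristic flow preserves $\SE_t$; and hence that $[\SD,\SD]=\SE_t$ -- i.e.\ $\SD$ is an Engel structure with flag $(\SW_t,\SD,\SE_t)$ -- if and only if $[W,\ell]\notin\SD$ at every point. Viewing $\SD/\SW_t$ as a section $s$ of the circle bundle $\BP(\SE_t/\SW_t)\to M$ and trivialising its fibres along each orbit of the characteristic flow by that flow, this open condition says precisely that $s$ restricts to a strictly monotone immersion on every characteristic orbit, the direction being fixed by the orientation of $\SE_t$. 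The theorem is thus reduced to deforming the formal section $s_t$, in families and rel $\partial D^k$, to one that is monotone along the $1$--dimensional characteristic foliation.

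\emph{Solving the directed immersion problem.} This last step is where I expect the main difficulty. One decomposes $M$ compatibly with the flow by a round handle decomposition in the sense of Asimov, following \cite{Vo} (or a triangulation transverse to $\SW_t$), and constructs $s_t$ by induction over the pieces; the local building block is the tautological Engel structure on the fibrewise projectivisation of a contact distribution (Cartan's prolongation), together with relative versions that match the prescribed formal data on overlaps, and extra flexibility is provided by $C^0$--small modifications of $s_t$ along $\SW_t$ supported in $4$--balls, which are homotopically invisible since compactly supported section modifications over a $4$--ball are classified by $\pi_4(\S^1)=0$. The crux is the global, parametric realisation of monotonicity: along closed or recurrent orbits of the characteristic flow the winding of $s_t$ relative to the flow trivialisation is a genuine integer invariant and must be positive for a monotone immersion to exist there, so the freedom in the even--contact $h$--principle of the first reduction must be used not merely to make each $\SE_t$ even--contact but to arrange the flow trivialisations so that this winding is forced positive on the recurrent set; only then can $s_t$ be deformed within the prescribed homotopy class, and coherently over the parameter disc and rel $\partial D^k$. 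Carrying this out while tracking the flag through the round handle induction is the heart of the argument.
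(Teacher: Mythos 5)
Your first two reductions coincide with the paper's own: the even--contact $h$--principle normalises $(\SW,\SE)$, and the Engel condition then becomes the requirement that the Legendrian line $\SD/\SW$ rotate at positive speed along the characteristic foliation (this is exactly the paper's angle function and ``Engel energy'', Lemmas \ref{lem:positive} and \ref{lem:EngelEnergyBounds}). But the third step --- actually producing monotonicity with prescribed formal data --- is where the whole content of the theorem lies, and your proposal does not contain it; you say so yourself (``the heart of the argument''). The paper's mechanism is concrete and two--staged: first, Theorem \ref{thm:red} uses a triangulation jiggled transverse to $\SW$ to create arbitrarily large rotation inside flowboxes around the skeleta of dimension $\le 3$ (each deformation being relative only to the flow--saturated pieces coming from lower--dimensional simplices), which leaves finitely many $4$--cells on whose boundary the angle rotates by at least $6\pi$; second, the filling Theorem \ref{thm:fill} deforms such a $6\pi$--radial shell to a genuine Engel structure by reinterpreting $\SD$ as a $\D^3$--family of curves in $\NS^2$ (Proposition \ref{prop:EngelCurves}) and using Little's classification of convex spherical curves (Theorem \ref{thm:Little}) to trade three equatorial turns for the four--leaf clover, whose leaves are then pulled around the equator to cancel an arbitrary angle deficit. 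Nothing in your outline plays the role of this filling construction.

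Concretely, the two ingredients you offer in its place do not suffice. The observation that compactly supported modifications of the section over a $4$--ball are classified by $\pi_4(\S^1)=0$ only says that any two extensions of given boundary data are homotopic as sections; it does nothing towards achieving the open differential condition $\partial_t c>0$ with prescribed boundary values, which is genuinely obstructed within the angular class (Bolzano: if $c(p,1)<c(p,0)$ no monotone angular extension exists). Overcoming this forces one to leave the class of sections tangent to the even--contact structure, i.e.\ to tilt $X$ out of $\xi$, and the loss of the contact condition must then be compensated by convexity of the associated spherical curves --- precisely what the kink and clover families are engineered to do. Likewise, your plan to use the freedom in the even--contact $h$--principle to ``force positive winding on the recurrent set'' of $\SW$ is unsubstantiated: that $h$--principle gives no control over the dynamics of the kernel it produces (the paper stresses that these dynamics are complicated and unstable), and the paper never needs such control, because inside the holes the filling changes the entire flag, including $\SW$ and $\SE$, so no winding condition along orbits of the original kernel has to be arranged globally. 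As it stands your text reproduces the comparatively standard reductions but leaves the essential extension step as an unproved plan, so it is not a proof of the theorem.
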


This strongly generalises T.~Vogel's result \cite{Vo}, which in these terms states that the set $\pi_0(\Engel(M))$ is not empty as soon as $M$ admits a completely oriented full flag. It is significant to note that T.~Vogel's argument uses flexibility in contact geometry to make the problem flexible, whereas our approach stays as far as possible from contact geometry. In addition, our construction does not use the assumption of parallelizability: the only natural orientation appearing from an Engel structure is that of the 3--distribution $\SE$, and this is the one that we control.

The methods used in the proof of Theorem \ref{thm:main} imply the following

\begin{corollary} \label{cor:cobordism}
Let $(M,\partial M)$ be a 4--fold with boundary and $(\SW,\SD,\SE)$ a formal Engel structure such that the line field $\SW$ is transverse to the boundary $\partial M$ and $(\partial M, T(\partial M) \cap \SE)$ is a contact structure. Then there is a deformation of the formal Engel structure into a genuine Engel structure inducing the same contact structure on the boundary $\partial M$.
\end{corollary}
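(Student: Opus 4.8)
The plan is to deduce this from the relative version of the construction proving Theorem~\ref{thm:main}, after installing an explicit genuine Engel model on a collar of $\partial M$. Since $\SW$ is transverse to $\partial M$, I would first fix a collar $\phi\colon\partial M\times[0,\delta)\hookrightarrow M$ with $\phi(\cdot,0)=\operatorname{id}$ and $\partial_t$ spanning $\SW$ along $\partial M\times\{0\}$, and write $\xi=T(\partial M)\cap\SE$, which is contact by hypothesis. Along $\partial M\times\{0\}$ the flag is forced to be $\langle\partial_t\rangle\subset\ell\oplus\langle\partial_t\rangle\subset\xi\oplus\langle\partial_t\rangle\subset TM$ for a smooth line field $\ell\subset\xi$, because $\SE=\xi\oplus\SW$ and $\SD\cap T(\partial M)$ is a line inside $\xi$. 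On $\partial M\times[0,\delta)$ take the flag $\SW_0=\langle\partial_t\rangle$, $\SD_0=\ell_t\oplus\langle\partial_t\rangle$, $\SE_0=\xi\oplus\langle\partial_t\rangle$, where $\ell_t$ is obtained from $\ell_0=\ell$ by rotating inside $\xi$ with nonvanishing angular speed in the $t$--direction, the sense of rotation chosen so that the orientation thereby induced on $\SE_0$ agrees with the given one along $\partial M$. A short bracket computation --- equivalently, the fact that $\SD_0$ is the canonical Engel distribution on a prolongation of $(\partial M,\xi)$ --- shows that $(\SW_0,\SD_0,\SE_0)$ is a genuine Engel structure whose characteristic line field is $\SW_0$ and which satisfies $\SE_0\cap T(\partial M\times\{0\})=\xi$; by construction it agrees with $(\SW,\SD,\SE)$ along $\partial M\times\{0\}$.

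Next I would upgrade the given structure to a genuine one near $\partial M$. Formal Engel structures form the space of sections of a bundle with connected fibre, so the space of formal Engel structures on $\partial M\times[0,\delta)$ restricting to a prescribed section over $\partial M\times\{0\}$ is connected (in fact weakly contractible, since the collar retracts onto its boundary slice). Hence $\phi^{*}(\SW,\SD,\SE)$ is homotopic, rel $\partial M\times\{0\}$ and through formal Engel structures, to $(\SW_0,\SD_0,\SE_0)$ over $\partial M\times[0,\delta/4]$ while staying equal to $\phi^{*}(\SW,\SD,\SE)$ outside $\partial M\times[0,\delta/2)$; as $\partial M\times[0,\delta/2]$ is a cofibration in $M$, this homotopy extends over $M$. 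After renaming, $(\SW,\SD,\SE)$ is now genuinely Engel on the compact collar $A:=\phi(\partial M\times[0,\delta/4])$, where it coincides with the model; in particular it still has characteristic line field transverse to $\partial M$ and still induces $\xi$ on $\partial M$.

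Finally I would run the proof of Theorem~\ref{thm:main} relative to $A$. That argument deforms a formal Engel structure into a genuine one by a finite sequence of local modifications of the flag, and none of these modifications needs to be applied over a region on which the flag is already an Engel distribution; consequently it goes through verbatim rel $A$, producing a homotopy of $(\SW,\SD,\SE)$ that is stationary on $A$ and ends at a genuine Engel structure on all of $M$. Being stationary on $A\supseteq\partial M$, the final structure still induces $\xi$ on $\partial M$, which is the assertion. (If the proof of Theorem~\ref{thm:main} is instead organised as an extension over a handle decomposition, the same works by starting from the collar $A$ and extending over the handles of $M$.)

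The first two steps are soft --- the near--boundary model is completely explicit and the homotopy of formal data is unobstructed --- so the one point that genuinely requires opening up the proof of Theorem~\ref{thm:main} is the last step: verifying that the deformation it provides can be carried out relative to a closed set on which the flag is already a genuine Engel distribution. I expect this to be where the real care is needed, although for $h$--principle arguments built from local modifications it is typically automatic.
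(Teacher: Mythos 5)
Your first two steps are fine (the boundary-slice analysis of the flag and the soft homotopy to the contact-prolongation model on a collar are correct), but the final step --- ``the proof of Theorem \ref{thm:main} goes through verbatim rel $A$'' --- is exactly the point that does not follow, and the justification you give for it is inaccurate for this paper's scheme. The deformation in Theorem \ref{thm:red} is \emph{not} a sequence of modifications applied only where the flag fails to be Engel: after the even--contact normalisation it raises the Engel energy $\SH(\SD)$ above a large constant $K_0$ on a neighbourhood of the entire $3$--skeleton, because the later steps (Corollary \ref{cor:dominationSymmetric} and Theorem \ref{thm:fill}) require the angle function to rotate by more than $6\pi$ along the $\SW$--segments over the boundary of every residual $4$--cell, not merely to be increasing. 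Being genuinely Engel on the collar $A$ only gives $\partial_t c>0$ there; your model rotates with ``nonvanishing angular speed'', so a $4$--cell abutting $A$ need not satisfy the domination hypothesis, and the missing rotation cannot be created without moving the structure on $A$ --- unless the collar model's rotation rate had been chosen larger than a threshold that depends on the triangulation, the adapted charts and the constants $C_{\phi(\sigma)}$, none of which exist at the moment you install the model. (The even--contact $h$--principle also has to be invoked in a relative form in your scheme; that part is standard but should be said.) As written, your key assertion is an unproven relative version of Theorem \ref{thm:main}, which is precisely where the content lies and which the paper neither proves nor needs.

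The paper's route is weaker and cheaper, and matches what the corollary actually claims: the deformation need not be stationary near $\partial M$, it only has to preserve the induced contact structure. One thickens $M$ to $\overline M = M\cup_{\partial M\times\{0\}}\partial M\times[-\varepsilon,0]$ so that $\partial M$ becomes interior, triangulates $\partial M$ first and extends the triangulation inward (Thurston's jiggling being relative), and then runs Theorem \ref{thm:red} followed by Theorem \ref{thm:fill}. All deformations after the even--contact normalisation keep the pair $(\SW,\SE)$ fixed and only rotate $\SD$ inside $\SE$, so $T(\partial M)\cap\SE$ --- hence the boundary contact structure --- is untouched even though the Engel structure near $\partial M$ is changed. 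If you want your stronger ``rel collar'' conclusion you must either prove a genuinely relative main theorem or quantify the collar model's rotation in advance; for the corollary as stated, drop the stationarity requirement and argue as the paper does.
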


In particular, this implies that the notion of Engel cobordism or Engel fillability is not particularly relevant in order to distinguish contact manifolds. In \cite{Vo}, T.~Vogel constructed an Engel structure on the cobordism $\mathbb{S}^2 \times \mathbb{S}^1 \times [0,1]$ with one boundary component being a tight contact structure and the other being overtwisted. Corollary \ref{cor:cobordism} states that given any $3$--fold $V$ and two homotopic contact structures $\xi_0$, $\xi_1$ (which admit a global legendrian line field), there exists an Engel structure on the trivial cobordism $V \times [0,1]$ inducing the contact manifold $(V\times\{0\},\xi_0)$ and $(V\times\{1\},\xi_1)$ on the boundary components.

In view of Theorem \ref{thm:main}, a reasonable question is whether the map $i$ is also injective on $\pi_k$, possibly after restricting to some subclass within $\Engel(M)$. The method of proof for Theorem \ref{thm:main} allows the careful reader to guess possible definitions for that class. This will be the content of future work. The existence of a proper class satisfying a complete h--principle would start Engel topology as a meaningful area within Differential Topology.

Let us consider a second corollary from our main theorem. Consider a $4$--dimensional foliation $\SF$ in a smooth $n$--dimensional manifold $M$. Then a flag $\SW^1 \subseteq \SD^2 \subseteq \SE^3 \subseteq T\SF$ and an orientation of $\SE^3$ are said to be a formal foliated Engel structure for the foliation $\SF$; a $2$--distribution $\SD \subseteq T\SF$ is called a foliated Engel structure if it is an Engel structure when restricted to each leaf of the foliation $\SF$. Denote the spaces of formal foliated Engel structures and foliated Engel structures by $\Flags(\SF)$ and $\Engel(\SF)$, respectively. The parametric nature of Theorem \ref{thm:main} implies the following result.
\begin{theorem} \label{cor:folia}
The inclusion map $\pi_k(i):\pi_k(\Engel(\SF))\longrightarrow\pi_k(\Flags(\SF))$ is surjective for every $k \geq 0$. Hence, formal foliated Engel structures are homotopic to flags of genuine foliated Engel structures.
\end{theorem}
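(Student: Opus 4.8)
The plan is to deduce Theorem \ref{cor:folia} from the construction proving Theorem \ref{thm:main}, carrying the transverse directions of $\SF$ along as extra parameters. Two features of that construction will be needed. First, it is parametric: already to obtain surjectivity on $\pi_k$ one must deform whole families of formal Engel structures over $S^k$ — or over $D^k$, rel the boundary — into families of genuine ones. Second, it is \emph{relative}: if the flag is already Engel on a neighbourhood of a prescribed closed set, the deformation can be taken stationary there, and its support can be localised on $M$. I will use the following consolidated form: for every auxiliary parameter manifold $P$, every closed subset $C\subseteq M\times P$, and every $P$--family of formal Engel structures on $M$ which is a genuine Engel structure on a neighbourhood of $C$, there is a homotopy of $P$--families, stationary near $C$, ending at a family of genuine Engel structures. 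Taking $P=S^k$ and $C=\emptyset$ gives Theorem \ref{thm:main} itself; the general statement should follow from the same argument, since the construction is performed over explicit pieces of $M$, depends continuously on the flag, and can be frozen wherever the flag is already Engel.

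The next step is to localise the foliated problem. In a foliation chart $U\cong\R^4\times T$ whose leaves are the slices $\R^4\times\{t\}$, the Engel conditions — which on each leaf $L$ read $[\SD,\SD]=\SE$ and $[\SE,\SE]=TL$ — involve only derivatives along the $\R^4$--factor. Hence a leafwise formal, resp.\ genuine, Engel structure on $U$ is precisely a $T$--family of formal, resp.\ genuine, Engel structures on $\R^4$; the orientation of $\SE$ is carried along without change. Consequently an $S^k$--family of leafwise formal Engel structures over $U$ is an $(S^k\times T)$--family of formal Engel structures on $\R^4$, to which the consolidated statement applies with $\R^4$ in the role of the ambient $4$--fold and $S^k\times T$ in the role of $P$.

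To globalise, fix a countable locally finite cover of $M$ by foliation charts $U_i\cong\R^4\times T_i$, and, by the shrinking lemma, relatively compact open sets $V_i$ with $\overline{V_i}\subseteq U_i$ which still cover $M$. Starting from a map $S^k\to\Flags(\SF)$, apply the chart statement on $U_1$ with $C$ taken to be the (possibly empty) region where the family is already leafwise Engel together with the complement of $V_1$; since the homotopy produced is stationary near $C$, it extends by the constant homotopy to all of $M\times S^k$ and makes the family leafwise genuine over a neighbourhood of a compact set covering, jointly with the remaining $V_i$'s, all of $M$. Iterate over $U_2,U_3,\dots$, always including in $C$ the region made leafwise Engel at earlier stages so that prior progress is preserved. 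Local finiteness of the cover ensures the process is eventually stationary near every point of $M$, so the concatenation of these homotopies converges to a homotopy of $S^k$--families ending at a genuine foliated Engel structure; this gives surjectivity of $\pi_k(i)$. Running the same argument with $D^k$, rel the boundary, in place of $S^k$ yields the relative parametric version as well.

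The one genuinely delicate point is the one stated in the first paragraph: one must verify that the proof of Theorem \ref{thm:main} is relative and continuous in all its parameters, so that the transverse coordinates of $\SF$ can be adjoined as dummy parameters that are never differentiated and so that its deformations can be supported away from a prescribed closed subset of $M$. I expect this to be a matter of inspecting that construction rather than introducing a new idea, but it is the step on which everything hinges; once it is in place, the chart reduction and the locally finite patching above are routine.
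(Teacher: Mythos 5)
Your argument hinges on the ``consolidated form'' you state in the first paragraph: a parametric h--principle that is \emph{relative} to an arbitrary closed set $C\subseteq M\times P$ on a neighbourhood of which the flag is already Engel, and whose deformation can moreover be supported in a prescribed open set $V_i$. This is a genuinely stronger statement than what the paper proves, and it does not follow from the construction ``by inspection''. The reduction step (Theorem \ref{thm:red}) works by pumping Engel energy near the $3$--skeleton so that every $4$--cell acquires at least $6\pi$ of rotation of the Legendrian direction along its boundary, and the filling step (Corollary \ref{cor:dominationSymmetric} plus Theorem \ref{thm:fill}) then replaces the structure on the \emph{whole interior} of each $4$--cell by the clover model. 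An arbitrary closed set $C$ where the flag happens to be Engel carries no lower bound on this rotation (positivity of $\partial_t c$ in Lemma \ref{lem:positive} says nothing quantitative), so cells meeting $\Op(C)$ cannot be dominated by a $6\pi$--radial shell without modifying the structure there, and the filling certainly cannot be ``stationary near $C$''. The only relative statement the paper does establish, Corollary \ref{cor:cobordism}, needs precisely the extra geometric hypotheses ($\SW$ transverse to the boundary, contact slice, angular model nearby) that make the boundary cells compatible with the radial model. Your chart--by--chart globalisation also quietly concentrates the closed--manifold difficulty rather than resolving it: on the early charts a localized relative statement could conceivably be extracted from the open--manifold h--principle, but at the final charts of the finite cover the set $C$ fills out all of $U_N$ except a compact region, and the required extension rel $C$ is exactly the compact hole--filling problem — i.e.\ exactly the lemma you assumed. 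So the proposal is circular at its key step.

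For contrast, the paper sidesteps relativity and localisation altogether: an $\NS^k$--family of formal foliated Engel structures on $(M,\SF)$ is repackaged as a single formal foliated Engel structure on the product $W=M\times\NS^k$ with the product foliation, and the two--step argument is rerun globally on $W$. This is why Proposition \ref{prop:triang} is proved for manifolds of arbitrary dimension: it yields flowboxes of the form $\D^3\times[0,1]\times\D^m$ that are simultaneously adapted to $\SW$ and to the foliation, the angle functions simply acquire the extra parameters $x\in\D^m$ (Theorem \ref{prop:redParametric}), and no parametric version of the filling is needed because all $6\pi$--radial shells share the same model on $t\in[\rho,2\rho]$, so the clover insertion is parameter--independent (Theorem \ref{prop:fillParametric}). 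If you want to salvage your route, you would have to prove the localized relative statement yourself — in particular how to fill cells whose boundary data is an arbitrary Engel germ with possibly small rotation — which is a new problem, not an inspection of the existing proof.
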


\subsection{Structure of the paper}

The article is organized as follows. In Section \ref{sec:definitions} we define all the objects involved and we discuss some known results. Subsection \ref{ssec:extensions} is classical to an extent, since it can be mainly found in the works of E. Cartan \cite{Car2}, though it has been overlooked for many years. It can be condensed into Proposition \ref{prop:EngelCurves}, which is a fundamental ingredient in the proof of Theorem \ref{thm:main}.

The article primarily focuses in the proof of the $\pi_0$--statement of Theorem \ref{thm:main}; the argument in this case is central for the remaining results, and once described in detail it can be readily applied to the parametric case in order to prove the $\pi_k$--statements.

The proof of the $\pi_0$--statement of Theorem \ref{thm:main} consists of three parts. First, given some full flag with oriented 3--distribution $\SE$ we turn it into a flag whose $3$--distribution is an even contact structure and whose $1$--distribution is its kernel. This is achieved with the $h$--principle for even--contact structures.

Second, in Section \ref{sec:red}, we triangulate $M$ in a manner adapted to the kernel of the formal Engel structure, and subsequently deform the formal Engel structure to a genuine Engel structure in a neighbourhood of the $3$--skeleton. This reduction process provides a collection of $4$--cells endowed with formal Engel structures that are genuine Engel structures in the boundary. We also prove in this section that such formal Engel structures on the 4--cells can be assumed to be of a particular form, which we call the \textbf{$6\pi$--radial shells}.

Third, in Section \ref{sec:universalHole}, we construct an object called the \textbf{four--leaf clover} which allows us to deform any $6\pi$--radial shell into a genuine Engel structure, thus concluding the proof of the $\pi_0$--statement in Theorem \ref{thm:main}. Note that the argument is not presented in a linear fashion, and the chosen order serves to better motivate the constructions.

Section \ref{sec:proof} discusses the parametric nature of the construction, and concludes Theorem \ref{thm:main}, Theorem \ref{cor:folia} and Corollary \ref{cor:cobordism} from the results in the proof of the $\pi_0$--statement of Theorem \ref{thm:main}.

\subsection{Acknowledgements} We are grateful to V.~Colin, V.L.~Ginzburg, E.~Giroux, E.~Murphy and A.~Stipsicz for useful discussions. We would like to especially acknowledge Y.~Eliashberg and T.~Vogel for intense and valuable discussions during the conference h--Principles in Houat, the arguments in this article have been greatly simplified thanks to them. The classical construction explained in Example \ref{ex:lorentzian} was pointed out to us by Daniel Fox and it has been an important intuition for the development of this work. The authors are supported by Spanish National Research Project MTM2013---42135. This work is supported in part by the ICMAT Severo Ochoa grant SEV-2011-0087 through the V. Ginzburg Lab. \'A.~del Pino is supported by La Caixa--Severo Ochoa grant. J. L.~P\'erez is supported by a MINECO FPI grant.

\section{Preliminaries on Engel Structures} \label{sec:definitions}

In this section we discuss even--contact structures and describe two relevant properties which are used in the argument of Theorem \ref{thm:main}. Second, we detail the characterization of certain Engel structures on the 4--cell $\D^3\times[0,1]$ in terms of curves in the 2--sphere $\NS^2$, and provide two valuable examples of Engel structures due to E.~Cartan \cite{Car2}.

\subsection{Even--contact structures}

In the previous section we briefly introduced even--contact structures. Let us describe this notion in detail.

\begin{definition}
Let $M$ be a smooth $(2n+2)$--dimensional manifold. A $(2n+1)$--distribution $\SE^{2n+1} \subseteq TM$ is said to be an even--contact structure if it is locally described by a 1--form $\alpha$ such that $\alpha \wedge (d\alpha)^n \neq 0$. $(M, \SE)$ is called an even--contact manifold, and $\SE$ is said to be coorientable if $\alpha$ can be defined globally.
\end{definition}

Even--contact structures can be regarded as transverse contact structures associated to line fields: the even--contact condition amounts to the 2--form $d\alpha$ being of maximal rank in $\ker(\alpha)$, and the kernel $\SW$ of the 2--form $d\alpha|_{\ker(\alpha)}$ is a real line field. They satisfy the following

\begin{lemma}
Let $(M^{2n+2}, \SE)$ be an even--contact manifold and $N^{2n+1} \subseteq M$ a $(2n+1)$--dimensional submanifold transverse to the kernel $\SW$ of $\SE$. Then $(N, \SE \cap TN)$ is a contact manifold.
\end{lemma}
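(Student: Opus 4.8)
The plan is to work locally and then globalize. Since both the even--contact condition on $\SE$ and the contact condition on $N$ are pointwise-local conditions on jets of defining forms, it suffices to check the claim in a neighbourhood of an arbitrary point $p \in N$; there we may assume $\SE = \ker(\alpha)$ for a locally defined $1$--form $\alpha$, so that $\alpha \wedge (d\alpha)^n \neq 0$. The natural candidate for a defining form of $\SE \cap TN$ on $N$ is simply the pullback $\beta = \iota^\ast\alpha$, where $\iota \colon N \hookrightarrow M$ is the inclusion. Clearly $\ker(\beta) = \SE \cap TN$ as a distribution on $N$ (it is $(2n)$--dimensional because $N$ is transverse to $\SW \subseteq \SE$, hence in particular not contained in $\SE$, so $\alpha|_{T_qN}$ is nonzero for $q$ near $p$). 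What remains is the nondegeneracy statement $\beta \wedge (d\beta)^n \neq 0$ on $N$, using $d\beta = \iota^\ast(d\alpha)$.

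First I would set up convenient linear algebra at a point $q \in N$ near $p$. Write $V = T_qM$, $E = \SE_q = \ker(\alpha_q)$, and $L = \SW_q$, the kernel of the $2$--form $\omega := d\alpha_q|_E$; by the even--contact hypothesis $\omega$ has maximal rank $2n$ on $E$, so $L$ is exactly one-dimensional and $E/L$ carries the induced symplectic form. The transversality hypothesis says $T_qN \oplus L = V$; in particular $W := T_qN \cap E$ has dimension $2n$ and $W \oplus L = E$, so the projection $E \to E/L$ restricts to a linear isomorphism $W \xrightarrow{\sim} E/L$. Since $L = \ker\omega$, this means $\omega|_W$ is nondegenerate, i.e. a symplectic form on the $2n$-dimensional space $W = \ker(\beta_q)$.

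Next I would translate this back into the wedge statement. We have $\beta_q = \alpha_q|_{T_qN}$, which is nonzero with kernel $W$, and $d\beta_q = (d\alpha_q)|_{T_qN}$, so $d\beta_q|_W = (d\alpha_q)|_W = \omega|_W$, which we just showed is symplectic on $W$. Therefore $(d\beta_q)^n|_W \neq 0$ as a top form on $W$, and since $T_qN = W \oplus \langle v\rangle$ for any $v$ with $\beta_q(v) \neq 0$, the form $\beta_q \wedge (d\beta_q)^n$ evaluated on $(v, \text{basis of } W)$ is $\pm\beta_q(v)\cdot (d\beta_q)^n(\text{basis of } W) \neq 0$. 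Hence $\beta \wedge (d\beta)^n \neq 0$ at every point of $N$, so $(N, \SE \cap TN)$ is a contact manifold. The global coherence of $\beta$ is automatic since, even if $\alpha$ is only locally defined, two choices differ by multiplication by a nowhere-zero function, and the contact condition is invariant under conformal rescaling, so the contact distribution $\SE \cap TN$ is well-defined globally on $N$ regardless of coorientability.

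The only genuine content, and thus the "main obstacle", is the observation that the transversality to $\SW$ forces $W = T_qN \cap E$ to be a symplectic complement to $L = \SW_q$ inside $(E, \omega)$ — everything else is bookkeeping. Concretely the point to get right is that $W \cap L = \{0\}$ (which is exactly transversality: if $0 \neq \ell \in W \cap L$ then $\ell \in T_qN \cap \SW_q$, contradicting $T_qN \pitchfork \SW$), so that $\omega|_W$ inherits nondegeneracy from $\omega$ on $E/L$. I would write this as one clean paragraph of linear algebra and then the wedge computation as a one-line consequence.
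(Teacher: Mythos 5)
Your proof is correct and follows essentially the same route as the paper: the paper's one-line argument also restricts $\alpha$ and $\alpha\wedge(d\alpha)^n$ to $N$ and uses transversality of $N$ to the kernel $\SW$ of $d\alpha$ to conclude this is a volume form. You simply spell out the underlying linear algebra (that $W=T_qN\cap\SE_q$ is a complement to $\SW_q$ in $\SE_q$ on which $d\alpha$ is nondegenerate), which the paper leaves implicit.
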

\begin{proof}
Given a locally defining 1--form $\alpha$ for the even--contact $\SE$, the $(2n+1)$--form $\alpha \wedge (d\alpha)^n|_N$ is a volume form on $N$ since the kernel $\SW$ of the 2--form $d\alpha$ is transverse to $N$.
\end{proof}

Given that even--contact structures $(M^4,\SE)$ induce contact structures in $3$--folds $N$ transverse to their kernel $\SW$, so do Engel structures $(M^4,\SD)$. In addition, the line field $TN \cap \SD \subseteq TN \cap \SE$ is a distinguished Legendrian vector field in the contact manifolds $(N,\SE\cap TN)$.

There are however two significant differences between contact and even--contact structures. A first invariant associated to an even--contact structure is its kernel: this line field often has complicated dynamics, and these are unstable under smooth perturbations of the even--contact structure. Therefore, Gray's stability cannot hold in full generality (although it does hold if we fix the line field $\SW$ \cite{Go}). Note that there is a Darboux theorem that, following the previous Lemma, states that even--contact structures are locally isomorphic to the stabilisation by $\mathbb{R}$ of the contact Darboux normal form.

The second difference is the existence of global geometric invariants. Even--contact structures satisfy a complete h--principle regarding their classification: pairs $(\alpha, \omega)\in\Omega^1(M)\times\Omega^2(M)$ where $\omega$ has maximal rank on $\ker(\alpha)$ are referred to as almost even--contact structures, and D.~McDuff used convex integration techniques to prove the following

\begin{theorem} \emph{(\cite[Section 20.6]{EM},\cite{McD})}\label{thm:hprincipleEvenContact}
For any given smooth manifold $M^{2n+2}$, there is a weak homotopy equivalence induced by the inclusion between the space of even contact structures and the space of almost even contact structures.
\end{theorem}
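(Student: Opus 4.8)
The plan is to recognise the even--contact condition as an open, $\operatorname{Diff}$--invariant first order differential relation and to prove the $h$--principle for it by convex integration, along the lines of McDuff's original argument. First I would set up the relation. In the cooriented case, a cooriented even--contact structure on $M^{2n+2}$ is the same datum as a nowhere zero $1$--form $\alpha$, considered up to positive scaling, whose $1$--jet lies in
$$ \mathcal{R}\ =\ \{\, j^1\alpha\ :\ \alpha\wedge(d\alpha)^n\neq 0 \,\}\ \subseteq\ J^1(M,T^*M), $$
which is an open subset; the holonomic sections of $\mathcal{R}$ are the $1$--jets $j^1\alpha$, and under $\alpha\mapsto(\alpha,d\alpha)$ these correspond to genuine even--contact forms, whereas arbitrary sections of $\mathcal{R}$ correspond exactly to (cooriented) almost even--contact structures $(\alpha,\omega)$. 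The non--coorientable statement is then recovered by pulling everything back to the double cover associated to the conormal line bundle of $\SE$ and running the argument $\mathbb{Z}/2$--equivariantly, so it suffices to treat $\mathcal{R}$; and since spaces of positive functions on $M$ are contractible, passing between $1$--forms and structures is harmless at the level of weak homotopy type.

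Next I would note that, over an \emph{open} manifold, openness and $\operatorname{Diff}$--invariance of $\mathcal{R}$ already give the parametric, $C^0$--dense $h$--principle by Gromov's theorem, so the only extra ingredient needed to cover closed $M$ is that $\mathcal{R}$ be \emph{ample}. This is the crux. Since $\alpha\wedge(d\alpha)^n\neq 0$ depends on $j^1\alpha$ only through $(\alpha,\omega)=(\alpha,d\alpha)$ and asserts precisely that the antisymmetric form $\omega|_{\ker\alpha}$ has maximal rank $2n$ on the $(2n+1)$--dimensional space $\ker\alpha$, one must understand the slices of $\mathcal{R}$ over principal lines. Fixing the $0$--jet and all derivatives of $\alpha$ except those in a chosen principal direction changes a single row --- and, by antisymmetry, its column --- of the matrix of $\omega$ while freezing the complementary block, so along such a line one is asking for which values $v$ of the free row the matrix $\bigl(\begin{smallmatrix} B & v \\ -v^{T} & 0 \end{smallmatrix}\bigr)$, with $B$ the frozen antisymmetric block, fails to have maximal rank. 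The key linear--algebra input is that the locus of non--maximal--rank antisymmetric forms on an odd--dimensional vector space has codimension three in the space of all antisymmetric forms there, and a direct check along principal lines shows that each slice $\mathcal{R}\cap P$ is ample --- it is either empty or connected with convex hull equal to the whole principal subspace $P$; hence $\mathcal{R}$ is ample.

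Granting ampleness, Gromov's convex integration theorem in its parametric form applies over an arbitrary, in particular closed, manifold $M$ and yields that the inclusion of the space of holonomic solutions of $\mathcal{R}$ into the space of formal solutions is a weak homotopy equivalence. Unwinding the identifications set up above, this is exactly the asserted weak homotopy equivalence between the space of even--contact structures and the space of almost even--contact structures. I expect the main obstacle to be the ampleness verification itself: one has to control the degeneracy locus uniformly over all principal directions and all frozen data, distinguishing several cases according to the position of the principal covector relative to $\ker\alpha$, and separately perform the equivariant bookkeeping that reduces the non--coorientable case to the cooriented one. Both steps are carried out in \cite{McD} and \cite[Section 20.6]{EM}.
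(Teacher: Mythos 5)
Your proposal is essentially the argument the paper relies on: the paper does not prove this statement but cites it to McDuff and to Eliashberg--Mishachev, where it is established exactly as you outline, by expressing the even--contact condition as an open, $\Diff$--invariant, ample first--order relation on $1$--jets of $1$--forms and applying parametric convex integration, with the ampleness case analysis (and the reduction of almost even--contact pairs $(\alpha,\omega)$ to formal solutions) carried out in those references. So your sketch matches the cited proof; the only caveat is that the codimension--three fact for degenerate skew forms does not by itself give ampleness of each principal slice, which is precisely the verification you correctly defer to \cite{McD} and \cite[Section 20.6]{EM}.
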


Theorem \ref{thm:hprincipleEvenContact} is used in the proof of Theorem \ref{thm:red}, the main reduction result used in Theorem \ref{thm:main}. This reduces the construction of an Engel structure from a formal Engel structure to the construction of an Engel structure from an even--contact structure.

Recall that M.~Gromov's $h$--principle applies to Engel structures in open manifolds. In particular, for an open 4--fold, the space of Engel structures is weakly homotopy equivalent to the space of formal Engel structures (see \cite[Theorem 7.2.3]{EM}). The contribution of Theorem \ref{thm:main} is a surjection h--principle for Engel structures in closed 4--folds.

\subsection{Engel structures in $\mathbb{D}^3 \times [0,1]$} \label{ssec:extensions}

In this subsection we discuss the relation between Engel structures and families of convex curves in the 2--sphere $\NS^2$. This will be used in Section \ref{sec:universalHole} in order to construct a genuine Engel structure in a 4--cell with appropriate boundary conditions.

\subsubsection{Engel structures as curves}

Consider the 4--cell $\mathbb{D}^3 \times [0,1]$ with coordinates $(x,y,z,t)$. Let $\SD=\langle\partial_t,X\rangle$ be a $2$--plane distribution where the vector field $X$ is tangent to the foliation by level sets $\mathbb{D}^3 \times \{t_0\}$, $t_0\in[0,1]$. Let us write $\overset{.}{X} = [\partial_t, X]$ and $\overset{..}{X} = [\partial_t, \overset{.}{X}]$, which are two vector fields also tangent to these level sets.

The three vector fields $X$, $\overset{.}{X}$ and $\overset{..}{X}$ on $D^3\times[0,1]$ can be regarded as $1$--parametric families of vector fields in $\mathbb{D}^3$, with parameter $t \in [0,1]$; these families are denoted by $X_t$, $\overset{.}{X}_t$ and $\overset{..}{X}_t$.

Trivialize $T\mathbb{D}^3$ with the coordinate frame $\langle \partial_x,\partial_y,\partial_z\rangle$ to identify all the fibres of the 2--sphere bundle $\mathbb{S}(T\mathbb{D}^3)$ with a fixed 2--sphere $\mathbb{S}^2$. For $p \in \mathbb{D}^3$ fixed, consider the restriction $X_p$ of the vector field $X$ to the vertical segment $\{p\} \times [0,1]$. Then the vector field $X_p$ describes a curve in $\mathbb{S}^2$ and thus the $2$--plane distribution $\SD$ is given by a $\mathbb{D}^3$--family of such curves.

In order to characterize the Engel condition from this viewpoint, we briefly discuss convex curves.

\subsubsection{Convex curves in $\NS^2$} Consider a parametrized smooth curve $\gamma:[0,1]\longrightarrow \NS^2$. Its unit tangent vector field is given by $\mathfrak{t}(t)=\gamma'(t)/||\gamma'(t)||$. Define $\mathfrak{n}(t)$ to be the unique vector field such that $\{\mathfrak{t}(t), \mathfrak{n}(t)\}$ is an orthonormal oriented basis of the tangent space $T_{\gamma(t)} \NS^2$. A point $\gamma(t)$ is said to be an \textbf{inflection point} of the curve $\gamma$ if $\langle\mathfrak{t}'(t),\mathfrak{n}(t)\rangle=0$, and the curve $\gamma$ is said to be \textbf{convex} if it has no inflection points.

The significance of this condition in terms of the Engel structure will shortly be explained. The following result proves that the homotopy classification of convex curves in the 2--sphere is determined by the homotopy class of the Frenet map
$$\mathfrak{F}(\gamma): [0,1] \to SO(3),\quad\mathfrak{F}(\gamma)(t) = (\gamma(t), \mathfrak{t}(t), \mathfrak{n}(t)).$$

\begin{theorem}[\cite{Lit}]\label{thm:Little}
The connected components of the space of convex closed curves in $\NS^2$ are
\begin{wideitemize}
  \item[1.] curves with $[\FF(\gamma)] \in \pi_1(SO(3))$ trivial,
  \item[2.] embedded curves with $[\FF(\gamma)]$ non trivial,
  \item[3.] curves that are not embbeded with $[\FF(\gamma)]$ non trivial.
\end{wideitemize}
\end{theorem}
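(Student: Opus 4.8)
The plan is to follow Little's original strategy \cite{Lit}, adapted to this setting. The first step is to pass from convex closed curves in $\NS^2$ to their Frenet lifts: a convex curve $\gamma$ has a well-defined moving frame $\FF(\gamma) = (\gamma, \mathfrak{t}, \mathfrak{n})$, and conversely, a path in $SO(3)$ whose first column has nowhere-vanishing derivative, after reparametrization, arises this way. I would first reparametrize every convex curve by arc length; this does not change the homotopy class within the space of convex curves because reparametrizations form a contractible group acting continuously. Then the key observation is that the convexity condition $\langle \mathfrak{t}', \mathfrak{n}\rangle \neq 0$ translates, after arc-length parametrization, into a positivity (or sign-definiteness) condition on a component of the Darboux derivative $\FF(\gamma)^{-1}\FF(\gamma)'$. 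In other words, convex curves correspond, via $\FF$, to paths in $SO(3)$ tangent to a fixed left-invariant cone. This converts the problem into a question about homotopy classes of "admissible" loops in the Lie group $SO(3)$, i.e.\ a nonholonomic path-space problem for which one can run a convex-integration-free direct argument (indeed, the relevant distribution here is, up to details, the standard contact structure on the unit tangent bundle, and admissible curves are Legendrian immersions with a convexity constraint).

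The second step is to identify the invariants. There is an obvious map to $\pi_1(SO(3)) \cong \Z/2$ given by $[\FF(\gamma)]$; this is clearly locally constant on the space of convex curves, so it is a homotopy invariant, accounting for the dichotomy between item 1 and items 2--3. To separate item 2 from item 3 one uses embeddedness: being embedded is \emph{not} obviously open or closed in the full curve space, so the real content is to show (a) within the non-trivial $\pi_1$ class, an embedded convex curve cannot be connected through convex curves to a non-embedded one, and (b) within each of the three putative classes the space is actually connected. For (a) I would argue that an embedded convex closed curve in $\NS^2$ bounds two convex discs and separates the sphere, an open-and-closed-type condition along a path of convex curves once one rules out the appearance of self-tangencies --- and a self-tangency would force an inflection point somewhere nearby by a degree/rotation-number count, contradicting convexity; this is the crux where convexity is used essentially rather than the weaker "locally convex" condition. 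For (b), connectedness, one normalizes: any convex curve with trivial $\FF$ can be pushed into a small convex disc and then contracted to a standard small circle traversed the appropriate number of times; any embedded one with non-trivial $\FF$ is isotoped (through embedded convex curves) to a great circle; and any non-embedded one with non-trivial $\FF$ is brought to a fixed model figure (a great circle with one extra small loop), using the rotation-number analysis to perform surgeries on the self-intersections in a convexity-preserving way.

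I expect the main obstacle to be part (b) in the non-trivial, non-embedded case: controlling self-intersections while staying inside the space of convex curves is delicate, since convexity is a pointwise open condition but the global combinatorics of self-intersections can only change in prescribed ways. The technical heart is a local model showing that a self-tangency can be created or destroyed without introducing an inflection point, together with a Whitney-type argument organizing these moves so that any convex curve reaches the chosen normal form. I would package as much of this as possible into statements about the admissible loop space in $SO(3)$, where the homogeneity of the group makes the local surgeries cleaner, and then transport the conclusions back to $\NS^2$ via $\FF$. Since the statement is quoted verbatim from Little \cite{Lit}, in the paper itself it would suffice to cite that reference; the sketch above is what one would reconstruct if a self-contained argument were required.
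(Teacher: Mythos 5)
The paper contains no proof of Theorem \ref{thm:Little} to compare against: it is Little's theorem, imported with the citation \cite{Lit}, and your closing remark that citing that reference suffices is exactly what the authors do. Your sketch does follow the general lines of Little's approach (pass to the Frenet lift, rephrase convexity as a cone condition on the Darboux derivative in $SO(3)$, use $[\FF(\gamma)]\in\pi_1(SO(3))\cong\mathbb{Z}/2$ as the coarse invariant, then reduce to normal forms), so as a roadmap it is reasonable.

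As a proof, however, it has genuine gaps at precisely the two points that carry all the difficulty. First, your argument that embeddedness cannot be lost along a homotopy of convex curves rests on the claim that a self-tangency ``would force an inflection point somewhere nearby by a degree/rotation-number count.'' This step fails: on $\NS^2$ there is no integer rotation number (only the $\mathbb{Z}/2$ class of the lift), and convex spherical curves with tangential self-intersections and no inflection points do exist --- for instance, take a locally convex planar closed curve with a self-tangency (say, two nearly coincident loops brought to internal tangency, or a triple loop with two strands made tangent) and pull it back into a hemisphere by the inverse of the affine chart of Lemma \ref{lem:affine}, which preserves convexity. So convexity alone does not exclude the degeneration you need to rule out; the fact that such a tangential curve cannot arise as a limit of \emph{embedded} convex curves is exactly the global content of Little's argument and needs a different proof. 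Second, connectedness of the non-embedded, nontrivial class --- which you yourself identify as the main obstacle --- is only asserted via unspecified ``convexity-preserving surgeries''; this is the technical heart of \cite{Lit} and cannot be waved through. A smaller but concrete slip: you propose isotoping an embedded curve with nontrivial lift to a great circle, but great circles are not convex (every point is an inflection point); the correct representative is a small round circle traversed once. Within this paper the right move is the one taken: quote the statement and cite Little.
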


In Section \ref{sec:universalHole} we consider curves that are not convex but that fail to be so in an explicit manner. These curves are $C^\infty$--limits of convex curves that become increasingly tangent to the equator $\{z=0\} \subseteq \NS^2$. In this case we can define the Frenet map of such a curve as the limit of the Frenet maps of the convex curves approaching it. 

\subsubsection{The Engel condition}

Following the description of Engel structures $\SD=\langle \partial_t,X\rangle$ on the 4-cell $\D^3\times[0,1]$ in terms of families of curves on the 2--sphere, we now provide a sufficient condition for these families to define Engel structures.

\begin{proposition} \label{prop:EngelCurves}
A $2$--distribution $\mathcal{D}=\langle \partial_t,X\rangle$ is an Engel structure at a point $(p,t) \in \mathbb{D}^3 \times [0,1]$ if both $\overset{.}X(p,t) \neq 0$ and at least one of the following two conditions holds:
\begin{wideitemize}
  \item[1.] the curve $X_p:[0,1]\longrightarrow\mathbb{S}^2$ has no inflection point at time $t$,
  \item[2.] the 2--distribution $\langle X_t, \overset{.}{X}_t \rangle$ is a contact structure on $\Op(p)\times\{t\}\subseteq\D^3\times\{t\}$.
\end{wideitemize}
\end{proposition}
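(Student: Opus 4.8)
The plan is to compute the iterated Lie brackets of the frame $\{\partial_t, X\}$ directly and check the Engel conditions $[\SD,\SD] = \SE$ is rank $3$ and $[\SE,\SE] = TM$ pointwise at $(p,t)$. Since $\SD = \langle \partial_t, X\rangle$ with $X$ tangent to the level sets $\D^3\times\{t_0\}$, we have $[\partial_t, X] = \dot X$ and $[\partial_t, \dot X] = \ddot X$, all tangent to the level sets. The first thing I would do is observe that $\SD + [\SD,\SD]$ is spanned by $\partial_t, X, \dot X$, so the hypothesis $\dot X(p,t) \neq 0$ together with the fact that $\dot X$ is tangent to the level sets (hence independent from $\partial_t$) needs to be upgraded to: $X(p,t), \dot X(p,t)$ are linearly independent in $T_p\D^3$. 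I expect the definition of $X_p$ as a curve in $\mathbb{S}(T_p\D^3)$ with unit-length representative to make this automatic — $X_p$ traces a curve on the unit sphere, so $X$ and $\dot X = X_p'$ are automatically orthogonal (after normalisation) hence independent as soon as $\dot X \neq 0$. So $\SE := \langle \partial_t, X, \dot X\rangle$ is genuinely a $3$-distribution near $(p,t)$, giving condition (1) of the Engel definition.

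Next I would verify $[\SE,\SE] = TM$ at $(p,t)$, i.e. that $\SE$ is even-contact there, which by definition of Engel amounts to showing the new bracket directions escape $\SE$. The relevant brackets among a frame of $\SE$ are $[\partial_t, X] = \dot X \in \SE$, $[\partial_t, \dot X] = \ddot X$, and $[X, \dot X]$. So $[\SE,\SE] \subseteq TM$ has rank $4$ at $(p,t)$ if and only if at least one of $\ddot X(p,t)$ or $[X,\dot X](p,t)$ is not in $\SE_{(p,t)} = \langle \partial_t, X, \dot X\rangle$. This is exactly where the two alternative hypotheses come in. Under condition (2), $\langle X_t, \dot X_t\rangle$ being contact on $\Op(p)\times\{t\}$ means $[X_t,\dot X_t]$ is not in $\langle X_t, \dot X_t\rangle$ as distributions on $\D^3\times\{t\}$; since $[X,\dot X]$ differs from the horizontal bracket $[X_t,\dot X_t]$ only by something tangent to the level set and these fields are all tangent to the level set anyway, $[X,\dot X](p,t) \notin \langle X(p,t), \dot X(p,t)\rangle$ inside $T_p\D^3$, a fortiori $[X,\dot X](p,t) \notin \SE_{(p,t)}$ since adding $\partial_t$ cannot help (it is transverse to the level set). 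Hence $\SE$ is even-contact at $(p,t)$.

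Under condition (1), the curve $X_p$ has no inflection at time $t$: with $\mathfrak{t} = X_p'/\|X_p'\|$ and $\mathfrak{n}$ completing the oriented orthonormal frame of $T_{X_p(t)}\mathbb{S}^2$, the non-inflection condition $\langle \mathfrak{t}', \mathfrak{n}\rangle \neq 0$ says precisely that $X_p(t), X_p'(t), X_p''(t)$ are linearly independent in $T_p\D^3 \cong \R^3$ — i.e. $X(p,t), \dot X(p,t), \ddot X(p,t)$ are linearly independent. Then $\ddot X(p,t) \notin \langle X(p,t), \dot X(p,t)\rangle$, and again since $\partial_t$ is transverse to the level set while $X, \dot X, \ddot X$ all lie in it, $\ddot X(p,t) \notin \SE_{(p,t)}$. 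So in either case $[\SE,\SE]_{(p,t)} = T_{(p,t)}M$, which together with the rank-$3$ statement for $\SE$ shows $\SD$ is Engel at $(p,t)$.

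The only genuinely delicate point — and the one I would spend care on — is translating the curve-theoretic statements (no inflection point; $X_p$ tracing a curve on a sphere) into the linear-algebra statements about $X, \dot X, \ddot X$ at the point, because the identification of the fibres of $\mathbb{S}(T\D^3)$ with a fixed $\mathbb{S}^2$ uses the coordinate trivialisation and one must be careful that differentiating in $t$ commutes appropriately with this identification (it does, because the trivialisation is $t$-independent). Once that bookkeeping is in place, the proof is a direct pointwise check: no integration, no $h$-principle, just the definition of Engel structures and the elementary observation that $\partial_t$ is transverse to the level sets while the brackets $\dot X, \ddot X, [X,\dot X]$ are not. I would also remark that the two conditions are not mutually exclusive and that the proposition as stated is pointwise, so the global Engel condition on $\D^3\times[0,1]$ follows by asking (1) or (2) to hold at every point.
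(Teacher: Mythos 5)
Your proof is correct and follows essentially the same route as the paper's: a pointwise bracket computation showing $\dot X\neq 0$ (read as the spherical curve having nonzero velocity, so $X,\dot X$ are independent) gives $[\SD,\SD]=\SE=\langle\partial_t,X,\dot X\rangle$, and then condition (1) forces $\ddot X\notin\SE$ while condition (2) forces $[X,\dot X]\notin\SE$, using that these brackets are tangent to the level sets while $\partial_t$ is transverse. Your extra remarks on normalisation and the $t$--independence of the trivialisation are the same bookkeeping the paper leaves implicit.
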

\begin{proof}
First, the 2--distribution $\SD$ not being integrable translates to the condition $\overset{.}X(p,t) \neq 0$. Indeed, since the vector field $[\partial_t,X]$ is tangent to the foliation by level sets, the condition for the vector $[\partial_t,X]_{(p,t)}$ not being in $\SD$ is equivalent to the vector field $\overset{.}{X}_t$ not being colinear with $X_t$ at the point $p$, so the curve $X_p$ has non--zero velocity $\overset{.}X(p,t) \neq 0$ at the point $(p,t)$.

Set $\SE = \langle \partial_t, X, \overset{.}{X} \rangle$. For the 2--distribution $\SD$ to be an Engel structure, the 3--distribution $\SE$ must be non--integrable, so at least one of the two vectors $\overset{..}{X} = [\partial_t, \overset{.}{X}]$ and $[X, \overset{.}{X}]$ should not be contained in $\SE$ at the point $(p,t)$. If $t\in[0,1]$ is not an inflection point of the curve $X_p$, the acceleration $\overset{..}{X}_t$ is not contained in the space spanned by the position $X_t$ and the speed $\overset{.}{X}_t$ at $p$. If the 2--distribution $\langle X_t, \overset{.}{X}_t \rangle$ is a contact structure on the level $\Op(p)\times\{t\}$, the Lie bracket satisfies $[X_t, \overset{.}{X}_t] \notin \langle X_t, \overset{.}{X}_t \rangle$ at the point $p$. 
\end{proof}

Proposition \ref{prop:EngelCurves} generalizes two classical constructions due to E.~Cartan \cite{Car2}:

\begin{example}[contact prolongation]
Let $(N,\xi)$ be a contact 3--fold and $\xi=\langle Y, Z\rangle$ a frame. The contact prolongation of $(N,\xi)$ is the Engel structure $(N \times [0,1],\SD)$ defined by
$$\SD(p,t)= \langle \partial_t, X(p,t)= \cos(t)Y(p)+ \sin(t)Z(p) \rangle.$$
In this case, Condition $(2)$ in Proposition \ref{prop:EngelCurves} is satisfied, which proves that $\SD$ is an Engel structure.
\end{example}

\begin{example}[lorentzian prolongation] \label{ex:lorentzian}
Consider a Lorentzian 3--fold $(N,g)$ with a type $(1,2)$ framing $\langle L^+, Y_-, Z_-\rangle$. The kernels of the Lorentzian metric at each point define a family of cones on the tangent bundle $TN$ which (after trivialization with the framing) provide a family of non--degenerate quadric curves $C_p$ in the unit 2--sphere $\NS^2$. Parametrize each curve $X_p: [0,1]\longrightarrow C \subseteq \NS^2$ and define the lorentzian prolongation $(N \times [0,1],\SD)$ as the $2$--distribution defined by $$\SD(p,t)= \langle \partial_t, X_p(t) \rangle.$$
This is an Engel structure because Condition $(1)$ of Proposition \ref{prop:EngelCurves} is satisfied.
\end{example}

Notice that the Legendrian line field $\SW$ is expanded by $\partial_t$ in the case of the contact prolongation and it is transverse to the direction $\partial_t$ in the lorentzian prolongation. The combination of these two constructions thus requires a deformation of the dynamics of the line field $\SW$, and this allows to create flexibility in the space of Engel structures. Proposition \ref{prop:EngelCurves} is a crucial ingredient for the extension result stated in Theorem \ref{thm:fill}, which is one of the two parts for the argument of Theorem \ref{thm:main}.

Proposition \ref{prop:EngelCurves} and Example \ref{ex:lorentzian} prove that convexity of the corresponding $\D^3$--family of curves implies that the $2$--distribution $\SD=\langle \partial_t,X_p(t)\rangle$ is an Engel structure on the 4--cell $\mathbb{D}^3\times[0,1]$.

\section{The Hole and Its Filling}\label{sec:universalHole}

In this section we address the problem of extending a particular germ of Engel structure on $\Op(\partial \D^4)$ to an Engel structure in the interior of $\D^4$. The reduction process explained in Section \ref{sec:red}, subsumed in Theorem \ref{thm:red}, implies that such an extension suffices in order to prove Theorem \ref{thm:main}.

Subsection \ref{ssec:engel_shell} introduces in detail this extension problem and Subsection \ref{ssec:radial} relates different extension problems in order to obtain a simpler model. Subsection \ref{ssec:comb} provides a useful rephrasing in terms of curves. In Subsection \ref{ssec:clover} we explain the solution up to three technical lemmas, whose statement and proof we defer to Subsection \ref{ssec:Glueing}. The influence of the article \cite{BEM} is manifest in this section.

\subsection{Engel shells}\label{ssec:engel_shell}

The following definition describes an Engel germ in the boundary $\partial(\D^3 \times [0,1])$ of the 4--cell $\D^3\times[0,1]$ that extends to the interior as a formal Engel structure. Consider coordinates $(x,y,z;t)$ in the cartesian product $\D^3 \times [0,1]$.

\begin{definition} \label{def:engelShell}
An \textbf{Engel shell} is a formal Engel structure $(\SW,\SD,\SE)$ on the 4--cell $\D^3\times[0,1]$ conforming to the following properties:
\begin{wideitemize}
\item[1.] $\SD = \langle \partial_t, X \rangle$, where $X$ is tangent to the level sets $\D^3\times\{t\}$,
\item[2.] In a neighbourhood $\Op(\partial(\D^3 \times [0,1]))$ of the boundary:

\begin{wideitemize}
\item[a.] The 2--distribution $\SD$ is an Engel structure,
\item[b.] $\SE = \xi \oplus \partial_t$, with $\xi$ a $t$--invariant contact structure on the level sets $\D^3\times\{t\}$,
\item[c.] $\SW = \langle \partial_t \rangle$ and $X$ is tangent to the 2--distribution $\xi$,
\item[d.] $\{\partial_t,X,[\partial_t,X] \}$ is a positive frame for $\SE$.
\end{wideitemize}
\end{wideitemize}
In case $(\SW,\SD,\SE)$ defines an Engel structure on $\D^3\times[0,1]$, the Engel shell is said to be solid.
\end{definition}

Let us discuss the homotopic properties of formal Engel structures with fixed Engel structure in the boundary.


Suppose that the 2--distribution $\SD=\langle\partial_t,X\rangle$ is extended to the interior. The extension of the 3--distribution $\SE$ to the interior is equivalent to the extension of a given vector field $V\subseteq\SE$ on the boundary, normal to the 2--distribution $\SD\subseteq\SE$, to a vector field on the interior also normal to the 2--distribution $\SD$. The space of such vector fields is diffeomorphic to the space of sections of a circle bundle over the pair $(\D^4,\partial \D^4)$, which is a non--empty contractible space. A similar reasoning implies that the space of extensions of $\SW$ to the interior once it has been fixed along a neighbourhood of the boundary is non--empty and contractible.

In consequence, the homotopic properties of the formal Engel structure are determined by the homotopy class of the 2--distribution $\SD$, which at the same time is determined by the homotopy class of the line field $X$. This justifies the notation $\SD$ or also $X$ for an Engel shell $(\D^3\times[0,1],\SW,\SD,\SE)$.

\subsection{Angular shells}\label{ssec:angular_shell} Engel shells are more general than those resulting from the reduction process stated in Theorem \ref{thm:red}. The main reason is that the first step in the reduction process deforms the given formal Engel structure to a formal Engel structure in which the 3--distribution $\SE$ is even--contact and the line field $\SW$ is its kernel. In particular, we can reduce the extension problem for Engel shells to those in which
$$\SW = \langle \partial_t \rangle,\quad \SE = \xi \oplus \SW,\quad X \in \xi \times \{Êt \}Ê\subset  T(\D^3\times\{t\})$$
not only on the boundary $\Op(\partial(\D^3 \times [0,1]))$, but on the interior $\D^3\times[0,1]$. This particular type of Engel shells $\SD$ are called \textbf{angular shells}. The advantage of an angular shell is that it can be described by one real--valued function; we now explain this.

Consider the euclidean metric in $\D^3\times[0,1]$. Let $\SD=\langle\partial_t,X\rangle$ be an angular shell and assume that the vector field $X$ is unitary. Fix also an orthonormal Legendrian frame $\{Y,Z\}$ for the contact structure $(\D^3, \xi)$ such that $\{\partial_t,Y,Z\}$ is a positive frame for the 3--distribution $\SE$. This choice assigns to each angular shell a real--valued function $c : \mathbb{D}^3 \times [0,1]\longrightarrow\R$
\begin{equation} \label{eq:angleCorrespondence} X(p,t)= \cos(c(p,t))Y + \sin(c(p,t))Z, \end{equation}
which is uniquely defined up to shifting by $2\pi$.

Given an angular shell $\SD$, the function $c = c(\SD)$ defined by Equation \ref{eq:angleCorrespondence} is called its \textbf{angle function}. The discussion on Subsection \ref{ssec:extensions} and the orientation conventions imply the following fact:

\begin{lemma}\label{lem:positive}
The angular shell $\SD$ is an Engel structure at the point $(p,t)$ if and only if $\partial_tc(\SD)(p,t) > 0$.
\end{lemma}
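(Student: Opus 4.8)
The plan is to connect the Engel condition for an angular shell to the sign of $\partial_t c$ via the curve-in-$\NS^2$ description developed in Subsection \ref{ssec:extensions}. Recall that for an angular shell we have $\SD = \langle \partial_t, X\rangle$ with $X(p,t) = \cos(c(p,t))Y + \sin(c(p,t))Z$, where $\{Y,Z\}$ is a $t$--invariant orthonormal Legendrian frame for the contact structure $\xi$. The point $p$ being fixed, the curve $X_p : [0,1] \to \NS^2$ traces out (under the trivialization by $\{\partial_t, Y, Z\}$, or rather by a frame of $T\D^3$ containing $Y,Z$) a curve lying inside a fixed great circle, namely the one spanned by the images of $Y$ and $Z$; it moves along this circle with angular speed $\partial_t c(p,t)$.

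First I would compute $\overset{.}{X} = [\partial_t, X]$. Since $Y$ and $Z$ are $t$--invariant, $[\partial_t, Y] = [\partial_t, Z] = 0$ when we regard $Y, Z$ as vector fields on $\D^3 \times [0,1]$, so $\overset{.}{X} = \partial_t c \cdot (-\sin(c) Y + \cos(c) Z)$. Thus $\overset{.}{X}(p,t) \neq 0$ precisely when $\partial_t c(p,t) \neq 0$, which handles the non--integrability of $\SD$. Next, since $X_p$ lies in a great circle, the curve is geodesic wherever it has nonzero speed, hence every point of it (with $\partial_t c \neq 0$) is an inflection point: condition $(1)$ of Proposition \ref{prop:EngelCurves} never applies to an angular shell. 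So we must rely on condition $(2)$, i.e. on $\langle X_t, \overset{.}{X}_t\rangle$ being a contact structure on the level $\D^3 \times \{t\}$. But at a point where $\partial_t c \neq 0$, the pair $\{X_t, \overset{.}{X}_t\}$ spans the same $2$--plane as $\{ \cos(c)Y + \sin(c)Z,\ -\sin(c)Y + \cos(c)Z\} = \langle Y, Z\rangle = \xi$, which is contact by hypothesis. Hence condition $(2)$ holds, and Proposition \ref{prop:EngelCurves} gives that $\SD$ is Engel at $(p,t)$ whenever $\partial_t c(p,t) \neq 0$.

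Combining: $\partial_t c(p,t) \neq 0$ already implies $\SD$ is Engel at $(p,t)$, so it remains to pin down the sign. For this I would invoke the orientation convention: $\SE$ is oriented so that $\{X, \overset{.}{X}, \overset{..}{X}\}$ — equivalently $\{\partial_t, X, [\partial_t, X]\}$ for an angular shell — is a positive frame (Definition \ref{def:engelShell}(d)), while $\{\partial_t, Y, Z\}$ is declared to be a positive frame for $\SE$. Now $\{\partial_t, X, \overset{.}{X}\} = \{\partial_t,\ \cos(c)Y+\sin(c)Z,\ \partial_t c\,(-\sin(c)Y + \cos(c)Z)\}$, and the change of basis from $\{\partial_t, Y, Z\}$ to this frame has determinant $\partial_t c$ (the $2\times 2$ rotation block $\begin{pmatrix}\cos c & -\partial_t c \sin c\\ \sin c & \partial_t c \cos c\end{pmatrix}$ has determinant $\partial_t c$). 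Therefore $\{\partial_t, X, \overset{.}{X}\}$ induces the chosen orientation of $\SE$ if and only if $\partial_t c(p,t) > 0$. Conversely, if $\partial_t c(p,t) = 0$ then $\overset{.}{X}(p,t) = 0$ and $\SD$ fails to be non--integrable at $(p,t)$, hence is not Engel there; and if $\partial_t c(p,t) < 0$, then although $\SD$ is a genuine $2$--distribution with $[\SD,\SD]$ of rank $3$, the induced orientation on $\SE$ disagrees with the fixed one, so $(\SW, \SD, \SE)$ is not an Engel structure as an oriented flag.

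I expect the only delicate point to be bookkeeping of orientations — making sure that the "positive frame" condition in Definition \ref{def:engelShell}(d), which is phrased via $[\partial_t, X]$, is correctly matched against the trivialization $\{\partial_t, Y, Z\}$ and against the natural orientation of $\SE$ coming from $\{X, Y, [X,Y]\}$ as in the introduction. Everything else is a short direct computation using that $Y$ and $Z$ are $t$--invariant and that $\xi = \langle Y, Z\rangle$ is contact. No global or topological input is needed; the statement is entirely pointwise.
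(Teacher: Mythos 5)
Your proof is correct and is essentially the paper's own argument: the lemma is stated there as an immediate consequence of the discussion in Subsection \ref{ssec:extensions} plus the orientation conventions, which is exactly what you spell out ($[\partial_t,X]=\partial_t c\,(-\sin(c)Y+\cos(c)Z)$, so non--integrability of $\SD$ is $\partial_t c\neq 0$; the Engel condition follows from condition $(2)$ of Proposition \ref{prop:EngelCurves} since $\langle X,[\partial_t,X]\rangle=\xi$ is contact; and the sign is pinned down by the determinant--equals--$\partial_t c$ comparison of $\{\partial_t,X,[\partial_t,X]\}$ with the positive frame $\{\partial_t,Y,Z\}$ of $\SE$). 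The only cosmetic slip is the parenthetical claim that $\{X,\overset{.}{X},\overset{..}{X}\}$ is the orienting frame of $\SE$ (in general $\overset{..}{X}\notin\SE$); your actual computation uses the correct frame $\{\partial_t,X,[\partial_t,X]\}$, so nothing is affected.
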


In particular we have the differential inequality $\partial_t c(\SD) > 0$ on a neighbourhood $\Op(\partial(\D^3\times[0,1]))$.

Conversely, suppose that a function $c: \D^3\times[0,1] \longrightarrow \R$ satisfies $\partial_t c(\SD) > 0$ on a neighbourhood $\Op(\partial(\D^3\times[0,1]))$. Then $c$ is the angle function of some angular model $\SD(c)$ which is uniquely defined.

In consequence, there is a bijective correspondence between angle functions up to shifting by $2\pi$ and angular models.
Contractibility of the space of real functions relative to the boundary implies that:

\begin{lemma}\label{lem:homotopyAngleFunction}
The angular shells $\SD(c_1)$ and $\SD(c_2)$ are homotopic relative to the boundary as angular shells if and only if their angle functions $c_1,c_2:\D^3\times[0,1]\longrightarrow\R$ agree on $\Op(\partial(\D^3\times[0,1]))$.
\end{lemma}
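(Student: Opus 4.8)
The plan is to transport the homotopy question for angular shells into a homotopy question for their angle functions via the bijective correspondence recorded before the statement, and then to use that the relevant space of functions is convex.

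First I would make the correspondence $\SD \leftrightarrow c(\SD)$ precise at the level of families. Since the Legendrian frame $\{Y,Z\}$ for $\xi$ is fixed once and for all, Equation \eqref{eq:angleCorrespondence} identifies a continuous family $\{\SD^s\}_{s\in[0,1]}$ of angular shells with a continuous family of unit vector fields $X^s \in \xi$, i.e.\ with a continuous map $[0,1]\times\D^3\times[0,1] \to S^1$; as the domain is contractible, this lifts to a continuous function $(s,p,t)\mapsto c^s(p,t)$, unique up to a global shift by $2\pi$ and hence pinned down once $c^0$ is chosen. By Lemma \ref{lem:positive} and Definition \ref{def:engelShell}, each $c^s$ is subject to the single constraint $\partial_t c^s > 0$ on the fixed neighbourhood $\Op(\partial(\D^3\times[0,1]))$; being an angular shell is therefore equivalent, on the function side, to membership in
\[ \mathcal{C} = \{\, c \in C^\infty(\D^3\times[0,1],\R) \ :\ \partial_t c > 0 \ \text{on}\ \Op(\partial(\D^3\times[0,1])) \,\}, \]
and a homotopy of angular shells rel boundary corresponds to a path in $\mathcal{C}$ whose germ along $\Op(\partial(\D^3\times[0,1]))$ is independent of the parameter.

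For the forward implication, suppose $\SD(c_1)$ and $\SD(c_2)$ are homotopic rel boundary through angular shells. Lifting the homotopy to a path $c^s\in\mathcal{C}$ with $c^0 = c_1$, the endpoint $c^1$ is an angle function of $\SD(c_2)$, so $c^1 = c_2 + 2\pi k$ for some $k\in\mathbb{Z}$; since the homotopy is constant on $\Op(\partial(\D^3\times[0,1]))$ and the lift is rigid there (the $2\pi$--ambiguity is discrete), the germ of $c^s$ along $\Op(\partial(\D^3\times[0,1]))$ does not depend on $s$, whence $c^1$ and $c_1$ have the same germ there. Thus $c_1$ and $c_2$ agree on $\Op(\partial(\D^3\times[0,1]))$, as claimed. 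Conversely, assume $c_1$ and $c_2$ agree on $\Op(\partial(\D^3\times[0,1]))$. The straight--line homotopy $c^s = (1-s)c_1 + sc_2$ is smooth, has the common germ on $\Op(\partial(\D^3\times[0,1]))$ for every $s$, and in particular satisfies $\partial_t c^s = \partial_t c_1 > 0$ there, so $c^s\in\mathcal{C}$ for all $s$ and $\{\SD(c^s)\}_{s\in[0,1]}$ is a homotopy of angular shells rel boundary from $\SD(c_1)$ to $\SD(c_2)$. This also shows that the subset of $\mathcal{C}$ with a prescribed boundary germ is convex, hence contractible, which is the contractibility of the space of real functions relative to the boundary invoked before the statement.

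I expect the only genuinely delicate point to be the bookkeeping of the $2\pi$--ambiguity: one has to verify that an arbitrary homotopy of angular shells lifts to a single continuous path of angle functions on $[0,1]\times\D^3\times[0,1]$ and that this lift is rigid near the boundary, so that a homotopy rel boundary forces equality of boundary germs rather than merely equality up to an a priori parameter--dependent locally constant multiple of $2\pi$. Everything else reduces to the convexity argument above.
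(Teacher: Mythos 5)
Your proposal is correct and follows essentially the same route as the paper, which justifies the lemma in one line by the contractibility (convexity) of the space of angle functions relative to the boundary -- exactly your straight--line interpolation argument combined with Lemma \ref{lem:positive}. Your extra bookkeeping of the $2\pi$--ambiguity in the lift (which the paper leaves implicit, the angle function being defined only up to a global $2\pi$ shift) is the right way to make the forward implication precise.
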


The following example illustrates the simplest case in which an angular shell can be homotoped to a genuine Engel structure. Bolzano's theorem shows that, in general, the extension problem is obstructed if one tries to solve it within the space of angular shells.

\begin{example}
Suppose that $c(p,1)>c(p,0)$ for all $p \in \D^3$. Lemmas \ref{lem:positive} and \ref{lem:homotopyAngleFunction} imply that the angular shell $\SD(c)$ is homotopic relative to the boundary to a solid Engel shell on $\D^3\times[0,1]$.
\end{example}

\subsection{Domination and radial shells} \label{ssec:radial} The extension problem for the germ of an Engel structure in the boundary of $\D^3\times[0,1]$ to its interior introduces a partial order between angular shells.

\begin{definition}
Let $\SD(c_1)$ and $\SD(c_2)$ be two angular shells, $\SD(c_1)$ \textbf{dominates} $\SD(c_2)$ if
\[ c_1(p,0) \leq c_2(p,0) \text{ and } c_2(p,1) \leq c_1(p,1). \]
\end{definition}

The following proposition reduces the problem of filling angular shells to filling angular shells with simple angular functions presenting some symmetry.

\begin{proposition} \label{prop:domination}
Let $\SD(c_1)$ and $\SD(c_2)$ be two angular shells such that $\SD(c_1)$ dominates $\SD(c_2)$. If $\SD(c_2)$ admits a deformation to a solid Engel shell through Engel shells, then so does $\SD(c_1)$.
\end{proposition}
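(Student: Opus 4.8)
The plan is to reduce the filling problem for $\SD(c_1)$ to that for $\SD(c_2)$ by constructing an explicit interpolation between the two angle functions that is monotone in $t$ wherever it needs to be. First I would observe that, by Lemma \ref{lem:homotopyAngleFunction}, only the boundary values of the angle function matter, so we are free to modify $c_1$ and $c_2$ in the interior of $\D^3\times[0,1]$ as long as the $\partial_t$-positivity is preserved on $\Op(\partial(\D^3\times[0,1]))$. The domination hypothesis says precisely that, on the bottom face $\D^3\times\{0\}$, the function $c_1$ lies below $c_2$, and on the top face $\D^3\times\{1\}$ it lies above. So if we travel in $t$ from a level just above $0$ to a level just below $1$, the function $c_1$ has to gain at least as much altitude as $c_2$ does; this ``extra room'' is what will allow us to run the deformation of $\SD(c_2)$ inside $\SD(c_1)$ and splice in purely monotone (hence solid, by Lemma \ref{lem:positive}) pieces to absorb the discrepancy.

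Concretely, I would first rescale the $t$-coordinate so that $\SD(c_2)$ is supported on a smaller subcollar, say $\D^3\times[\varepsilon,1-\varepsilon]$, matching the given germ near $\partial$: on $\D^3\times[0,\varepsilon]$ and $\D^3\times[1-\varepsilon,1]$ one prescribes angle functions that are strictly increasing in $t$ and that interpolate between the boundary data of $c_1$ at $t=0,1$ and the (shifted) boundary data of $c_2$ at $t=\varepsilon,1-\varepsilon$. The domination inequalities $c_1(\cdot,0)\le c_2(\cdot,0)$ and $c_2(\cdot,1)\le c_1(\cdot,1)$ are exactly what is needed for such strictly increasing interpolations to exist (one can take affine interpolations in $t$, after also adding a large enough constant multiple of $t$ to $c_2$ on its own support, which does not change its homotopy class rel boundary by Lemma \ref{lem:homotopyAngleFunction} and only makes the $\partial_t$-inequality easier near $\partial$). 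On the two collars the resulting shell is a genuine Engel structure by Lemma \ref{lem:positive}, and on the middle block it is, up to reparametrisation and a global shift, the shell $\SD(c_2)$, which by hypothesis deforms to a solid Engel shell rel boundary. Concatenating these deformations (keeping them fixed near the interfaces, where everything is already Engel) yields a deformation of the modified shell to a solid one through Engel shells.

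The last point is to check that the modified shell is homotopic to $\SD(c_1)$ rel boundary through Engel shells, so that the deformation just produced also applies to $\SD(c_1)$ itself. This is immediate from Lemma \ref{lem:homotopyAngleFunction}: by construction the modified angle function agrees with $c_1$ on $\Op(\partial(\D^3\times[0,1]))$, since we only altered $c_1$ in the interior. The main obstacle in writing this carefully is bookkeeping: one must make sure the $2\pi$-shift ambiguity in angle functions is handled consistently when one glues the constant-speed collars to the rescaled copy of $\SD(c_2)$, and that the strictly-increasing interpolations on the collars can be chosen smoothly in $p\in\D^3$ and smoothly matching the contact germ at the endpoints. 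None of this is conceptually hard — it is a partition-of-unity/reparametrisation argument — but it is where a fully rigorous write-up spends its effort.
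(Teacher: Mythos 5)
Your overall strategy --- splice a reparametrised copy of $\SD(c_2)$ into a middle block $\D^3\times[\varepsilon,1-\varepsilon]$, fill the two collars with strictly $t$--increasing (hence solid, by Lemma \ref{lem:positive}) interpolations, and then run the assumed deformation of $\SD(c_2)$ in the middle --- is the same as the paper's, and it does go through whenever the domination inequalities are strict, $c_1(p,0)<c_2(p,0)$ and $c_2(p,1)<c_1(p,1)$ for all $p$. The genuine gap is the non--strict case, which the proposition allows. If $c_1(p,0)=c_2(p,0)$ at some $p$, no strictly increasing collar function on $[0,\varepsilon]$ can agree with the (strictly increasing) germ of $c_1$ at $t=0$ and reach the value $c_2(p,0)$ at $t=\varepsilon$, since just above $t=0$ the function $c_1$ already exceeds $c_2(p,0)$. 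Your proposed remedy of replacing $c_2$ by $c_2+Kt$ fails on three counts: it leaves $c_2(\cdot,0)$ unchanged, so it creates no room at the bottom face, while it raises $c_2(\cdot,1)$ and so destroys the room at the top face; the assertion that this ``does not change the homotopy class rel boundary by Lemma \ref{lem:homotopyAngleFunction}'' is the opposite of what that lemma says, because $c_2+Kt$ differs from $c_2$ on a neighbourhood of $\{t=1\}$; and, most importantly, the hypothesis is fillability of $\SD(c_2)$ itself, not of a sheared or shifted shell, so transferring the deformation to $\SD(c_2+Kt)$ is (an instance of) the very statement being proved --- the step is circular. The paper avoids all of this by not using flat splicing levels: it first deforms $c_1$ rel boundary (Lemmas \ref{lem:positive} and \ref{lem:homotopyAngleFunction}) so as to be strictly increasing on collars where it sweeps through the values $c_2(p,0)$ and $c_2(p,1)$ at graph levels $t=h_0(p)$ and $t=h_1(p)$; these graphs are allowed to touch $\partial(\D^3\times[0,1])$ exactly where equality holds, so no monotone collar of positive width is needed there, and $\SD(c_2)$ is then transplanted into the region between the graphs via an embedding $\Phi$ with $\Phi^*c_1=c_2$ near its boundary.

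A second, smaller, inaccuracy: in your last step you claim the modified angle function agrees with $c_1$ on $\Op(\partial(\D^3\times[0,1]))$ ``since we only altered $c_1$ in the interior'', but along the lateral boundary $\partial\D^3\times[0,1]$ the middle block carries $c_2$--data, not $c_1$--data, so Lemma \ref{lem:homotopyAngleFunction} does not apply as stated. The conclusion you need is still recoverable --- near the lateral boundary both angle functions are strictly $t$--increasing, so the straight--line homotopy between them is through Engel shells --- but as written your construction does not produce a homotopy relative to the whole boundary, which is what is actually needed when these shells are glued back into the manifold.
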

\begin{proof}
There are smooth functions $h_1,h_2: \D^3 \to [0,1]$ such that $c_1(p,h_i(p)) = c_2(p,i)$, $i=0,1$. Use Lemma \ref{lem:positive} to deform $c_1$ to be strictly increasing for $t \in \Op([0,h_0(p)]) \cup \Op([h_1(p),1])$.

Now there is a unique embedding $\Phi: \Op(\partial \SD(c_2)) \to \SD(c_1)$ satisfying $\Phi^*c_1 = c_2$. Extending $\Phi$ to the interior of $\SD(c_2)$ arbitrarily, $\SD(c_1)$ can be homotoped within $\Phi(\SD(c_2))$, relative to its boundary, to achieve $\Phi^*c_1 = c_2$. Perform the Engel deformation in $\Phi(\SD(c_2))$ provided by assumption. The claim follows.
%
\end{proof}

Given an angle function $c:\D^3\times[0,1]\longrightarrow\R$ and a point $p\in\D^3$, the difference $c(p,1)-c(p,0)$ measures the amount of rotation of the Legendrian vector field $X$. The extension problem that we solve in Theorem \ref{thm:fill} concerns a particular class of angular shells: these are angular shells which rotate enough along each vertical segment of the boundary $\partial\D^3\times[0,1]$. In order to describe in precise terms this geometric intuition, we introduce the following definition. Hereafter, the symbol $\rho$ denotes a fixed numeric real value such that $[\rho,2\rho]\subset (0,1)$.

\begin{definition}
Let $K\in\R^+$ be a constant. A function $c:\D^3\times[0,1]\longrightarrow\R$ is said to be \textbf{$K$--radial} if it conforms to the following three properties
\begin{wideitemize}
\item[a.] $c(p,t)$ is increasing in $t\in[0,2\rho]$ and satisfies
$$c(p,t) = c(p,\rho) + \frac{(t-\rho)}{\rho}\cdot K,\mbox{ for }t \in [\rho,2\rho],$$
\item[b.] $c(p,t) - c(p,\rho)$ is invariant under the action of $SO(3)$ on $\D^3$,
\item[c.] $c(p,t)$ is $p$--invariant for $(p,t) \in \Op(\{0\}\times[0,1])$,
\end{wideitemize}
The angular shell $\SD(c)$ associated to a $K$--radial angle function $c$ is said to be a \textbf{$K$--radial shell}.
\end{definition}

In this notation, the most relevant use of Proposition \ref{prop:domination} is the following corollary.

\begin{corollary} \label{cor:dominationSymmetric}
Let $c$ be an angle function, and $K\in \R^+$ such that
$$K < \min_{p\in\partial \D^3}(c(p,1)-c(p,0)).$$
Then there exists a $K$--radial function $c'$ such that $\SD(c)$ dominates $\SD(c')$.
\end{corollary}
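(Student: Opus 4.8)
The plan is to construct $c'$ explicitly as a $K$--radial function and then produce functions $h_0, h_1 : \D^3 \to [0,1]$ witnessing domination. First I would normalise: since the claim concerns angular shells up to homotopy relative to the boundary, and the domination order only constrains the boundary values $c(p,0)$ and $c(p,1)$, I only need control of $c$ near $t = 0$ and $t = 1$; the behaviour of $c$ in the interior of $[0,1]$ is irrelevant to the statement. So I may assume without loss of generality (after an auxiliary deformation as in the proof of Proposition \ref{prop:domination}) that $c$ is strictly increasing in $t$ on neighbourhoods of $t=0$ and $t=1$, and I record the two continuous functions
$$ a(p) = c(p,0), \qquad b(p) = c(p,1), \qquad b(p) - a(p) > K \text{ for } p \in \partial\D^3. $$

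Next I would build $c'$. Set $c'(p,t)$ to be constant in $p$ near the core $\{0\}\times[0,1]$ (property c) and, for the $SO(3)$--invariance in property b, let $c'(p,t) - c'(p,\rho)$ depend only on $|p|$ and $t$. Concretely I would choose $c'$ so that on $[\rho,2\rho]$ it is the prescribed affine ramp $c'(p,\rho) + \frac{t-\rho}{\rho}K$, so that it is increasing on all of $[0,2\rho]$ (property a), and so that
$$ c'(p,0) \ \le\ \min_{q \in \partial\D^3} a(q) \qquad\text{and}\qquad c'(p,1) \ \ge\ \max_{q\in\partial\D^3} b(q) - \big(\text{slack}\big), $$
no — more carefully: I want $c'(p,1) - c'(p,0) = K$ is impossible since a $K$--radial shell only rotates by exactly $K$ on the middle band while being free elsewhere; but it must rotate by at least $K$ in total. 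Wait: rereading the definition, $K$--radial constrains $c$ only on $[0,2\rho]$, leaving $[2\rho,1]$ free, so the total rotation $c'(p,1)-c'(p,0)$ can be made as large as we like. Thus the real content is to pick $c'$ with $c'(p,0)$ uniformly below $a(p)$ and $c'(p,1)$ uniformly above $b(p)$ on $\partial\D^3$, while also matching the required radial normal form. I would therefore take $c'(p,0) \le \min_{\partial\D^3} a$ and $c'(p,1) \ge \max_{\partial\D^3} b$, with $c'$ monotone increasing throughout, the affine ramp of total gain $K$ inserted on $[\rho,2\rho]$, and everything made $SO(3)$--invariant and $p$--constant near the core; such a $c'$ exists by a routine interpolation.

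Then domination $c_1 = c$, $c_2 = c'$: I need $c(p,0) \le c'(p,0)$ and $c'(p,1) \le c(p,1)$. But that is the wrong direction — I have $c'(p,0)$ small, so $c(p,0)\le c'(p,0)$ fails in general. The resolution is that domination is about the existence of the reparametrising embedding, not literal pointwise inequality of boundary values at the endpoints: looking back at Proposition \ref{prop:domination}, domination requires $c_1(p,0) \le c_2(p,0)$ and $c_2(p,1)\le c_1(p,1)$ where $\SD(c_1)$ dominates $\SD(c_2)$, i.e. the \emph{dominating} shell has the \emph{smaller} initial angle and \emph{larger} final angle. So I want $\SD(c)$ to dominate $\SD(c')$, meaning $c(p,0) \le c'(p,0)$ and $c'(p,1) \le c(p,1)$: thus I must choose $c'(p,0)$ \emph{above} $\max_{\partial\D^3} a$ is again wrong. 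Let me restate cleanly: I need $c'(p,0) \ge c(p,0)$ and $c'(p,1)\le c(p,1)$ for all $p\in\partial\D^3$, together with $c'$ $K$--radial; since $c(p,1)-c(p,0) > K$, I have room to fit a $K$--total-gain radial profile strictly inside the window $[c(p,0),c(p,1)]$ over $\partial\D^3$. Concretely: pick a constant $k_0$ with $\max_{\partial\D^3} c(\cdot,0) \le k_0$ and $k_0 + K \le \min_{\partial\D^3} c(\cdot,1)$ — possible exactly because $K < \min_{\partial\D^3}(c(\cdot,1)-c(\cdot,0))$ and $\partial\D^3$ is compact — and set $c'$ to be the $p$--independent function equal to $k_0$ on $[0,\rho]$, the affine ramp from $k_0$ to $k_0+K$ on $[\rho,2\rho]$, and constantly $k_0+K$ on $[2\rho,1]$, smoothed to be weakly increasing. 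This $c'$ is trivially $K$--radial (it is $p$--independent, hence $SO(3)$--invariant and core--constant; the ramp condition holds by construction), and $c'(p,0)=k_0 \ge c(p,0)$, $c'(p,1)=k_0+K \le c(p,1)$ on $\partial\D^3$, so $\SD(c)$ dominates $\SD(c')$.

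The main obstacle is purely bookkeeping: one must be careful about the direction of the inequalities in the definition of domination and in Proposition \ref{prop:domination}, and one must ensure the constructed $c'$ genuinely satisfies all three clauses (a),(b),(c) of $K$--radiality simultaneously — the cleanest route, as above, is to make $c'$ independent of $p$ entirely, which makes (b) and (c) automatic and reduces (a) to the one--variable interpolation between the constants $k_0$ and $k_0+K$ with the prescribed affine middle segment. A minor point to check is that one is allowed to first deform $c$ near $t=0,1$ to be strictly monotone without changing its boundary values, which is exactly the step used at the start of the proof of Proposition \ref{prop:domination}; alternatively one observes that the definition of $K$--radial constrains only $[0,2\rho]$, and the embedding $\Phi$ from Proposition \ref{prop:domination} only needs the boundary germs, so no monotonicity of $c$ itself is actually required for the domination conclusion — only $c'$ needs to be monotone, and it is by construction.
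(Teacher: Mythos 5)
There is a genuine gap, and it occurs at the step you flagged as routine: the constant $k_0$ need not exist. The hypothesis $K<\min_{p\in\partial\D^3}(c(p,1)-c(p,0))$ is a \emph{pointwise} statement; it does not imply the uniform inequality $\max_{p\in\partial\D^3}c(p,0)+K\leq\min_{p\in\partial\D^3}c(p,1)$, which is what your choice of $k_0$ requires. For instance, if $c(p,1)=c(p,0)+K+1$ on $\partial\D^3$ but $c(\cdot,0)$ oscillates by more than $1$ along $\partial\D^3$, the windows $[c(p,0),\,c(p,1)-K]$ have empty intersection, so no constant works; compactness only tells you the minimum of the difference is attained, it does not merge these windows. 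The way out is precisely the freedom you discarded by making $c'$ independent of $p$: $K$--radiality only forces $c'(p,t)-c'(p,\rho)$ to be $SO(3)$--invariant and $c'$ to be $p$--invariant near the core, so the offset $c'(p,\rho)$ may be an essentially arbitrary smooth function of $p$. One should take $c'(p,t)=A(p)+G(|p|,t)$ with $G$ the radial profile and choose $A$ pointwise inside the (nonempty, by the hypothesis and continuity near $\partial\D^3$) windows, rather than a single constant.

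A second, related problem is the quantifier in the definition of domination: the inequalities $c(p,0)\leq c'(p,0)$ and $c'(p,1)\leq c(p,1)$ must hold for \emph{every} $p\in\D^3$, since the proof of Proposition \ref{prop:domination} needs the functions $h_0,h_1:\D^3\to[0,1]$ and the embedding $\Phi$ over the whole ball; you only arrange and verify them on $\partial\D^3$. At interior points the hypothesis gives no control whatsoever on $c(p,0)$ and $c(p,1)$ (the Engel--shell condition only constrains $\partial_t c$ near $t=0,1$, not the values), so with your $p$--independent $c'$ the inequality $k_0+K\leq c(p,1)$ can fail badly in the interior. Here the relevant flexibility is that $K$--radiality imposes nothing on $[2\rho,1]$: for interior radii the profile $G(|p|,\cdot)$ may come back down after the ramp (while still increasing near $t=1$, as the shell condition demands), making $c'(p,1)-c'(p,0)$ as negative as needed away from $\partial\D^3$, whereas near $\partial\D^3$, where $\partial_t c'>0$ must hold for all $t$ and hence $c'(p,1)-c'(p,0)>K$, the hypothesis is exactly what gives you room. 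Finally, a smaller defect of the same kind: your $c'$ is flat near $t=0$ and $t=1$, so $\partial_t c'=0$ there and $\SD(c')$ is not an Engel (hence not an angular) shell at all; this is easily repaired by strictifying, but it signals that the boundary behaviour of $c'$ cannot be treated as an afterthought.
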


It is time to state Theorem \ref{thm:fill}, the main result of this section and, along with Theorem \ref{thm:red}, one of the two key ingredients in the proof of the existence h--principle stated in Theorem \ref{thm:main}. The rest of this section is dedicated to its proof, and its statement reads as follows:

\begin{theorem} \label{thm:fill}
A $6\pi$--radial shell is homotopic through Engel shells to a solid Engel shell.
\end{theorem}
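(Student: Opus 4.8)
\textbf{Proof plan for Theorem \ref{thm:fill}.}

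The plan is to exploit the rotational symmetry of a $6\pi$--radial shell to reduce the $4$--dimensional extension problem to a $2$--dimensional one about families of curves in $\NS^2$, and then to solve that problem by a ``four--leaf clover'' construction. First I would unpack the $K$--radial structure: away from the symmetric core near $\{0\}\times[0,1]$, the angle function $c$ depends on $p\in\D^3$ only through the radial coordinate $r=|p|$, so the associated angular shell is genuinely $SO(3)$--equivariant on the region where the $6\pi$ rotation is concentrated. Via the curve dictionary of Subsection \ref{ssec:extensions}, the angular shell $\SD(c)$ over the vertical segment $\{p\}\times[0,1]$ is the curve $t\mapsto(\cos c(p,t))Y+(\sin c(p,t))Z$ on the equatorial great circle $\{z=0\}\subseteq\NS^2$; by Proposition \ref{prop:EngelCurves}, the Engel condition can be restored either by making such a curve convex (pushing it off the equator so it becomes strictly inflection--free, Condition 1) or by arranging that the $2$--distribution $\langle X_t,\dot X_t\rangle$ be contact in the $\D^3$--directions (Condition 2). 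The $6\pi$ of total rotation is exactly the budget needed: a closed convex curve in $\NS^2$ with nontrivial Frenet class must wind roughly $2\pi$ past the ``embedded'' regime, and Little's theorem (Theorem \ref{thm:Little}) tells us which homotopy classes of convex curves are available, so having $6\pi=3\cdot 2\pi$ of slack guarantees enough room to interpolate between the prescribed boundary curve near $t=0$ and the one near $t=1$ through a path of convex (or controlled near--convex) curves realizing any required Frenet homotopy.

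The key steps, in order, are: (i) use Lemmas \ref{lem:positive} and \ref{lem:homotopyAngleFunction} to normalize, relative to the boundary, so that the shell is genuinely Engel (i.e. $\partial_t c>0$) on the slabs $t\in[0,\rho]$ and $t\in[2\rho,1]$, leaving all the ``work'' to be done in the central slab $t\in[\rho,2\rho]$ where $c$ is affine with slope $K/\rho$ and $c(p,t)-c(p,\rho)$ is $SO(3)$--invariant; (ii) rephrase the central slab, using the trivialization of $\S(T\D^3)$, as a $\D^3$--family of arcs in $\NS^2$ each winding $6\pi$ along the equator, and observe that the only obstruction to solidity is the presence of inflection points (the equatorial circle is everywhere inflected); (iii) construct the \textbf{four--leaf clover}: a model deformation, supported in the central slab, that drags the equatorial arc off the equator into a convex curve whose trace looks like a clover with four petals, chosen so that its Frenet map realizes the correct element of $\pi_1(SO(3))$ dictated by the boundary data, and so that the deformation can be performed $SO(3)$--equivariantly in $r$ and glued to the untouched region near $r=0$ using the $p$--invariance clause (c) of $K$--radiality; (iv) verify via Proposition \ref{prop:EngelCurves}(1) that the resulting family is Engel at every interior point, i.e. the shell is now solid, and check that throughout the deformation one stays within the class of Engel shells (the boundary germ, the splitting $\SE=\xi\oplus\partial_t$, and the positivity of the frame are preserved). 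The three technical lemmas alluded to in Subsection \ref{ssec:clover} presumably handle the gluing of the clover model to the outer slabs, the equivariance in the radial direction, and the Frenet--class bookkeeping.

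The main obstacle I expect is step (iii): building the four--leaf clover so that it simultaneously (a) is convex, hence Engel by Proposition \ref{prop:EngelCurves}(1); (b) has the right Frenet homotopy class, since Little's theorem says convex curves fall into only three components and one must land in the component compatible with the prescribed boundary curves at $t=\rho$ and $t=2\rho$ — this is precisely where the $6\pi$ budget, rather than a smaller multiple of $2\pi$, is essential; and (c) can be swept out by a deformation that is $SO(3)$--equivariant on the annular region $r\in\Op([0,1])\setminus\{0\}$ yet degenerates correctly to a $p$--independent family near $r=0$, so that no obstruction is created at the cone point. Managing the transition near the core — where equivariance forces the curve family to be constant in the angular directions — without destroying convexity is the delicate point; I anticipate this is resolved by allowing the clover to pass through the controlled near--convex curves mentioned after Theorem \ref{thm:Little} (the $C^\infty$--limits of convex curves tangent to the equator), whose Frenet maps are still defined, and then appealing to Proposition \ref{prop:EngelCurves}(2) — the contact alternative — precisely on the measure--zero locus where Condition 1 fails. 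Once the clover model exists with these three properties, Proposition \ref{prop:domination} and Corollary \ref{cor:dominationSymmetric} make it a universal filling, and the theorem follows.
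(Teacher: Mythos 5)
There is a genuine gap, and it lies in your steps (i)--(ii): you have misidentified the obstruction. In a $6\pi$--radial shell the central slab $t\in[\rho,2\rho]$ is \emph{already} solid, since there $\partial_t c = 6\pi/\rho>0$ and Lemma \ref{lem:positive} applies; the inflection points of the equatorial curves are not the problem in that region. The real obstruction sits in the slab $t\in[2\rho,1]$, where nothing is assumed except the boundary germ: at interior points $p$ one may have $c(p,2\rho)>c(p,1)$, and then no homotopy relative to the boundary within angular shells can achieve $\partial_t c>0$ there (this is exactly the Bolzano--type obstruction the paper points out after Lemma \ref{lem:homotopyAngleFunction}). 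So your normalization step (i), which asks to make the shell genuinely Engel on $[2\rho,1]$ rel boundary before doing any work, is precisely the step that cannot be carried out; if it could, the clover would be unnecessary altogether.

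Consequently the role you assign to the four--leaf clover (``push the equatorial arc off the equator so the central slab becomes convex, hence Engel'') is not the mechanism that proves the theorem. In the paper the $6\pi$ of winding is spent differently: the three equatorial laps are deformed through the kink curves $\beta_\theta$ and, via Little's theorem (Lemma \ref{lem:kink2clover}, both curves having winding number $3$ in the affine chart), into the clover; then one point of the clover is flattened to an $\infty$--order tangency with the equator (Lemma \ref{lem:glueing2}) and that point is stretched into an arbitrarily long \emph{clockwise} equatorial run (Lemma \ref{lem:stretch}). This is what lets one decrease the effective angle at $t=2\rho$ by an arbitrary constant $C<0$, so that $\widetilde c(p,2\rho)<\widetilde c(p,1)$ and the troublesome tail $[2\rho,1]$ can finally be filled by Lemma \ref{lem:positive}. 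Your appeal to Proposition \ref{prop:EngelCurves}(2) ``on the measure--zero locus where convexity fails'' also needs the structure the paper builds in for this purpose: the contact alternative is usable only because the tameness condition forces the inflection locus to consist of radial intervals $[a,b]\times\{t\}$ on which $\langle X,\overset{.}{X}\rangle$ coincides with $\xi$ on an open region of the level set (Proposition \ref{prop:solidTameComb}), not merely at isolated inflection points. In short, your outline reproduces the curve dictionary and the Little--theorem step correctly, but without the flatten--and--stretch mechanism that lowers $c(\cdot,2\rho)$ the proof does not close, because the slab $[2\rho,1]$ remains obstructed.
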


Theorem \ref{thm:red} in the next section implies that in order to prove Theorem \ref{thm:main}, it suffices to deform angular shells with difference angle $c(p,1)-c(p,0)$ greater than $6\pi$, for any $p \in \Op(\D^3)$, to solid Engel shells. In consequence, Corollary \ref{cor:dominationSymmetric} and Theorem \ref{thm:fill} indeed conclude the proof of Theorem \ref{thm:main}.

The proof of Theorem \ref{thm:fill} is essentially contained in Figure \ref{fig:movie} and it features the four--leaf clover curve as a crucial ingredient. The fact that the contact 2--plane field $\xi\subseteq T\D^3\times\{t\}$ in an angular shell cuts the unit sphere $\S^2\cong T_p\D^3\times\{t\}$ in an equator depending on the point $p$, and the essential role of the inflection points of the curves on this 2--sphere, require an additional technicality that we now address by defining Engel combs.

\subsection{Engel combs} \label{ssec:comb} Subsection \ref{ssec:extensions} implies that Engel shells can be described in terms of  $\D^3$--parametric families of parametrized curves in the 2--sphere $\NS^2$. These curves are given by the unitary vector field $X$ that determines the 2--distribution $\SD=\langle\partial_t,X\rangle$. The vector field $X$ is a section of the unit tangent bundle of the level sets $\D^3\times\{t\}$ and thus (once this bundle is trivialized) can be considered as a map $X:\D^3\times[0,1]\longrightarrow\S^2$.

Instead of the trivialization $\langle\partial_x,\partial_y,\partial_z\rangle$ provided by the coordinates, we trivialize the tangent bundles $T(\D^3 \times \{t\})$ of the level sets in a manner more suited to such families of curves. This is done as follows: for each point $(p,t) \in \D^3 \times [0,1]$, consider the $t$--invariant orientation--preserving linear isometry $\varphi_{(p,t)}: T_{(p,t)}(\D^3 \times \{t\}) \longrightarrow \R^3$ defined by the conditions
$$\varphi_{(p,t)}(Y(p,t)) = \partial_x, \quad \varphi_{(p,t)}(Z(p,t))= \partial_y,$$
where $\{Y,Z\}$ is a frame for the contact structure $(\D^3,\xi)$.
The isometries $\varphi_{(p,t)}$ identify the unit sphere of the contact plane $T_{(p,t)}(\D^3 \times \{t\}) \cap \SE_{(p,t)}$ with the horizontal equator $\mathbb{S}^2 \cap \{ z=0 \}$.

Consider the following rotation of angle $\theta$ around the $z$--axis:
$$\Rot(\theta)=\left(
\begin{array}{ccc}
\cos(\theta)&-\sin(\theta)&0\\
\sin(\theta)&\cos(\theta)&0\\
0&0&1
\end{array}
\right),\mbox{ and write }e_1 = \left(\begin{matrix} 1\\0\\0 \end{matrix}\right).$$
Then a given radial shell $\SD(c)$ yields a 1--parametric family of curves
\begin{equation} \label{eq:radialComb}
\left\{\begin{array}{l}
\gamma^c_r: [0,1] \longrightarrow \NS^2,\quad r \in [0,1], \\
\gamma^c_r(t) = \Rot(c(p,t)-c(p,\rho))e_1 \quad  \text{ where $p$ is any point in $\D^3$ with radius $r$.}
\end{array} \right.
\end{equation}

In the proof of Theorem \ref{thm:fill} we work with Engel shells that are not necessarily angular. This leads to the following definition.

\begin{definition} \label{def:EngelComb}
Let $K\in\R^+$ be a positive constant. A {\bf $K$--Engel comb} is a $[0,1]$--family of curves $\gamma_r: [0,1] \longrightarrow \NS^2$, $r \in [0,1]$, such that
\begin{wideitemize}
\item[a.] $\exists c:\D^3\times[0,1]\longrightarrow\R$ a $K$--radial function such that
$$\gamma_r(t) = \gamma^c_r(t),\mbox{ for }t \in \Op([0,2\rho] \cup \{1\})\mbox{ and }r \in \Op(\{1\}),$$
\item[b.] The curves $\gamma_r$ are $r$--invariant for $r\in\Op(\{0\})\subseteq[0,1]$,
\item[c.] The curves $\gamma_r$ are $C^\infty$--tangent to the horizontal equator $\{z= 0 \}$ in its inflection points.
\end{wideitemize}
\end{definition}

These families of curves neatly describe a particular type of Engel shells:

\begin{lemma} \label{lem:comb2shell}
Consider a $K$--Engel comb $\gamma_r$, and a smooth function $d: \D^3 \to \R$. Then the 2--plane distribution
$$\SD(\gamma_r) = \langle \partial_t, X(p,t)\rangle = \langle\partial_t,\varphi_{(p,t)}^{-1} \Rot(d(p))\gamma_{|p|}(t) \rangle$$
defines an Engel shell.
\end{lemma}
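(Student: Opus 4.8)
The plan is to verify the two conditions of Proposition \ref{prop:EngelCurves} hold at every point $(p,t)\in\D^3\times[0,1]$ for the $2$--distribution $\SD(\gamma_r)$, so that it is genuinely Engel where it should be, and to check the boundary normalisations required by Definition \ref{def:engelShell} separately. First I would unwind the formula: the vector field $X$ is, fibrewise, the curve $t\mapsto\gamma_{|p|}(t)$ rotated by a $t$--independent isometry $\varphi_{(p,t)}^{-1}\Rot(d(p))$. Since $\varphi_{(p,t)}$ is $t$--invariant, the derivatives $\overset{.}{X}_t$ and $\overset{..}{X}_t$ are the images under this same isometry of $\gamma_{|p|}'(t)$ and $\gamma_{|p|}''(t)$; in particular $\overset{.}{X}(p,t)\neq 0$ precisely because a $K$--radial function has $c$ strictly increasing in $t$ on $[0,2\rho]$ and, more importantly, because the defining curves of an Engel comb are immersed (a point I would want to make explicit, perhaps noting it is implicit in Definition \ref{def:EngelComb}, since "curve" there should mean immersed curve with the stated tangency behaviour). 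This disposes of the non--integrability of $\SD$.

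Next I would treat the $3$--distribution $\SE=\langle\partial_t,X,\overset{.}{X}\rangle$. By Proposition \ref{prop:EngelCurves} it suffices that at each $(p,t)$ either the curve $t\mapsto X_p(t)$ has no inflection at time $t$, or that $\langle X_t,\overset{.}{X}_t\rangle$ is contact near $p$ on the level set. Condition (c) of Definition \ref{def:EngelComb} is exactly engineered so that at an inflection point the curve is $C^\infty$--tangent to the equator $\{z=0\}$; and the equator, under the identification $\varphi_{(p,t)}$, is the unit circle of the contact plane $T(\D^3\times\{t\})\cap\SE$. So at an inflection point $X_t$ lies in the contact distribution $\xi$ and $\overset{.}{X}_t$, being tangent to the equator to infinite order, also lies in $\xi$ at $p$; thus $\langle X_t,\overset{.}{X}_t\rangle=\xi$ near $p$ (using that $\overset{.}{X}_t\neq 0$, so the two vectors span the full contact plane), which is contact. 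Away from inflection points, Condition (1) of Proposition \ref{prop:EngelCurves} holds directly. Hence $\SD(\gamma_r)$ is Engel at every interior point, and in particular the orientation $\{\partial_t,X,[\partial_t,X]\}$ of $\SE$ is the one induced; I would point out this matches the positivity convention of Definition \ref{def:engelShell}(d) because a $K$--radial $c$ is increasing, so $\overset{.}{X}_t$ is a positive rotation of $X_t$ in the contact plane.

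Finally I would check that the resulting formal Engel structure $(\SW,\SD,\SE)$ meets the normalisations of Definition \ref{def:engelShell} near $\partial(\D^3\times[0,1])$. Here I invoke Condition (a) of Definition \ref{def:EngelComb}: on $\Op([0,2\rho]\cup\{1\})\times\Op(\{1\})$ the comb agrees with the radial comb $\gamma^c_r$ of an honest $K$--radial function $c$, which via Equation \eqref{eq:radialComb} and Equation \eqref{eq:angleCorrespondence} is literally the angular shell $\SD(c)$ twisted by $\Rot(d(p))$ — so near the boundary $\SD$ has the form $\langle\partial_t,\cos(\tilde c)Y+\sin(\tilde c)Z\rangle$ with $\tilde c$ increasing in $t$, giving $\SW=\langle\partial_t\rangle$, $\SE=\xi\oplus\partial_t$ with $\xi$ the $t$--invariant contact structure, $X\in\xi$, and the positive frame condition; and on the sides $\partial\D^3\times[0,1]$ this holds because there $r\in\Op(\{1\})$. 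The main obstacle, and the place I would be most careful, is Condition (c) of the comb definition: I must make sure that "$C^\infty$--tangent to the equator at inflection points" really forces $\overset{.}{X}_t\in\xi$ and hence $\langle X_t,\overset{.}{X}_t\rangle$ contact — not merely $\langle X_t,\overset{.}{X}_t\rangle\subseteq\xi$ with a collapse, which would fail to be contact — and that the family is smooth in $r$ across $r=|p|$ so that the bracket $[X,\overset{.}{X}]$ is computed in the full $4$--fold correctly near these inflection loci; this is where the infinite--order tangency, rather than mere first--order vanishing, is essential.
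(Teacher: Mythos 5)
There is a genuine mismatch between what you set out to prove and what the lemma asserts. Lemma \ref{lem:comb2shell} only claims that $\SD(\gamma_r)$ is an \emph{Engel shell} in the sense of Definition \ref{def:engelShell}: a formal Engel structure of the stated form which is Engel, and in the prescribed angular normal form, only on $\Op(\partial(\D^3\times[0,1]))$. It does \emph{not} claim the shell is solid, and it cannot: the paper stresses that Engel combs in general do not yield solid shells, precisely ``due to the lack of control on either the velocities or the inflection points of the curves''. Your interior verification is therefore both unnecessary and incorrect for a general comb. Definition \ref{def:EngelComb} does not require $\gamma_r'$ to be non--vanishing (that is Condition (1) of tameness, Definition \ref{def:tameComb}), so your assertion that $\overset{.}{X}(p,t)\neq 0$ ``because the defining curves of an Engel comb are immersed'' reads a hypothesis into the definition that is deliberately absent. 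Likewise, your claim that at an inflection point $\langle X_t,\overset{.}{X}_t\rangle=\xi$ \emph{near} $p$ does not follow from $C^\infty$--tangency at the single point: Proposition \ref{prop:EngelCurves}(2) needs the contact condition on $\Op(p)\times\{t\}$, and in the paper this open-neighbourhood statement is extracted from Condition (2) of tameness (inflection points filling a radial interval $[a,b]\times\{t\}$), which is exactly the content of the separate Proposition \ref{prop:solidTameComb}. In short, most of your argument is an attempt at Proposition \ref{prop:solidTameComb} under weaker hypotheses, where it fails.

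What the lemma actually requires — and what the paper's short proof supplies — is two things. First, smoothness of $\SD(\gamma_r)$, which is only in doubt along the core $\{0\}\times[0,1]$ because $p\mapsto|p|$ is not smooth at the origin; this is precisely the role of Condition (b) of Definition \ref{def:EngelComb} ($r$--invariance of the curves near $r=0$), which you never invoke. Second, the boundary normalisations of Definition \ref{def:engelShell}, which follow from Condition (a): near $t\in[0,2\rho]\cup\{1\}$ and $r\in\Op(\{1\})$ the comb agrees with the radial comb of a $K$--radial function, so the rotated family is an angular shell with increasing angle there, giving Engelness, $\SW=\langle\partial_t\rangle$, $\SE=\xi\oplus\langle\partial_t\rangle$ and the positive frame. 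Your final paragraph does carry out this boundary check correctly and can be kept; replace the interior argument by the smoothness-at-the-core observation (and note that the formal extensions of $\SE$ and $\SW$ to the interior are automatic, as discussed in Subsection \ref{ssec:engel_shell}).
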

\begin{proof}
Condition (b) in Definition \ref{def:EngelComb} implies that $\SD(\gamma_r)$ is smooth near $\{0\} \times [0,1] \subseteq \D^3 \times [0,1]$, whereas Condition (a) recovers the boundary conditions of the Engel shell.
\end{proof}

Engel combs conform a strict subclass of Engel shells that is well suited for the extension problem. However, the resulting Engel shells are not necessarily solid due to the lack of control on either the velocities or the inflection points of the curves. The following definition includes an additional condition which guarantees that the Engel comb yields a solid Engel shell.

\begin{definition} \label{def:tameComb}
An Engel comb  $\gamma_r$ is said to be {\bf tame} if it satisfies the following two properties:
\begin{wideitemize}
\item[1.] $\gamma_r'$ is non--vanishing,
\item[2.] Consider the set $\mathcal{I}_{\gamma_r} = \{(r,t) \in [0,1]^2| \text{ $t$ is an inflection point of } \gamma_r\}$. 

For every $(r,t) \in \mathcal{I}_{\gamma_r}$, $\exists a,b\in\R^+$, $a<b$, such that
$(r,t) \in [a,b] \times \{t\} \subset \mathcal{I}_{\gamma_r}$.
\end{wideitemize}
\end{definition}

Indeed, these two conditions imply that tame Engel combs induce through Lemma \ref{lem:comb2shell} solid Engel shells.

\begin{proposition} \label{prop:solidTameComb}
The Engel shells induced from a tame Engel comb are solid Engel shells.
\end{proposition}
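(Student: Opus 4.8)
The plan is to verify the Engel conditions pointwise using Proposition~\ref{prop:EngelCurves}, and then to check that the non--solid locus is empty for a tame comb. Write $\SD = \SD(\gamma_r)$ for the Engel shell produced by Lemma~\ref{lem:comb2shell} from the tame Engel comb $\gamma_r$, with $X(p,t) = \varphi_{(p,t)}^{-1}\Rot(d(p))\gamma_{|p|}(t)$. First I would observe that, since $\varphi_{(p,t)}$ is $t$--invariant and $\Rot(d(p))$ does not depend on $t$, the curve $t\mapsto X(p,t)$ in $\S^2$ is, up to a fixed isometry, exactly the curve $\gamma_{|p|}$; in particular $\overset{.}{X}(p,t) \ne 0$ for every $(p,t)$ is equivalent to Condition~(1) of tameness, $\gamma_r' \ne 0$. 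This already rules out the integral locus of $\SD$, i.e.~$\SD$ is a genuine $2$--distribution with $\SE = \langle\partial_t,X,\overset{.}{X}\rangle$ a genuine $3$--distribution everywhere.

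Next I would run the dichotomy of Proposition~\ref{prop:EngelCurves} at each point $(p,t)$. If $t$ is not an inflection point of the curve $t'\mapsto X(p,t')$ — equivalently, not an inflection point of $\gamma_{|p|}$ — then Condition~(1) of the Proposition applies and $\SD$ is Engel at $(p,t)$. So the only points to worry about are those $(p,t)$ with $(|p|,t)\in \mathcal{I}_{\gamma_r}$. Here I would invoke Condition~(2) of tameness: there is an interval $[a,b]\subset \R^+$ with $(|p|,t)\in [a,b]\times\{t\}\subset \mathcal{I}_{\gamma_r}$, meaning that $t$ is an inflection point of $\gamma_r$ for all $r$ in a full neighbourhood (within $[a,b]$) of $|p|$. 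At an inflection point the curve is $C^\infty$--tangent to the horizontal equator $\{z=0\}$ by Condition~(c) of the definition of Engel comb; hence on a neighbourhood of the sphere $\{|q|=|p|\}$ inside $\D^3\times\{t\}$ the vector field $X_t$ takes values in (and is $C^\infty$--tangent to) the equator $\S^2\cap\{z=0\}$, which under $\varphi^{-1}_{(\cdot,t)}$ is precisely the contact plane field $\xi$ of the ambient angular structure. So near $(p,t)$ the $2$--distribution $\langle X_t,\overset{.}{X}_t\rangle$ is tangent to $\xi$, and since $\xi$ is a contact structure on $\D^3$ and $X_t,\overset{.}{X}_t$ span a $2$--plane there (using $\overset{.}{X}_t \ne 0$ and non--colinearity, which we already have), we get $\langle X_t,\overset{.}{X}_t\rangle = \xi$ on $\Op(p)\times\{t\}$ — a contact structure. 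Thus Condition~(2) of Proposition~\ref{prop:EngelCurves} holds and $\SD$ is Engel at $(p,t)$ as well. Combining the two cases, $\SD$ is Engel at every point of $\D^3\times[0,1]$, i.e.~the induced Engel shell is solid.

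The main obstacle — and the place where the hypothesis of tameness is genuinely used rather than just Condition~(1) — is the inflection locus. A single curve $\gamma_r$ tangent to the equator at an isolated inflection time does \emph{not} by itself force $\langle X_t,\overset{.}{X}_t\rangle$ to be contact at that point, because $\overset{..}{X}_t$ could fail to escape $\langle X_t,\overset{.}{X}_t\rangle$ and the Lie bracket $[X_t,\overset{.}{X}_t]$ is computed from the variation of $X_t$ in the \emph{transverse} directions on $\D^3\times\{t\}$, i.e.~as $r$ varies. Condition~(2) of tameness is exactly what makes this transverse derivative well behaved: by requiring the inflection to persist in an interval of radii, the vector fields $X_t$ and $\overset{.}{X}_t$ remain tangent to the (fixed) equator $\{z=0\}$ on an open set, so that $\langle X_t,\overset{.}{X}_t\rangle$ coincides there with the contact plane $\xi$, whose non--integrability is built into the angular structure. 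I would make sure to spell out the last step — that a $2$--plane field which is tangent to $\xi$ and of rank $2$ must equal $\xi$, hence inherits the contact condition — since it is the one subtle point. The remaining verifications (smoothness near $\{0\}\times[0,1]$, boundary behaviour, orientation of the frame $\{\partial_t,X,[\partial_t,X]\}$) are already contained in Lemma~\ref{lem:comb2shell} and the definition of Engel shell, so nothing further is needed there.

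\begin{proof}
Let $\SD = \SD(\gamma_r) = \langle \partial_t, X\rangle$ with $X(p,t) = \varphi_{(p,t)}^{-1}\Rot(d(p))\gamma_{|p|}(t)$ be the Engel shell produced by Lemma~\ref{lem:comb2shell} from the tame Engel comb $\gamma_r$. By Lemma~\ref{lem:comb2shell} the $2$--distribution $\SD$ is already an Engel shell, so it only remains to check that $\SD$ is an Engel structure at every point of $\D^3 \times [0,1]$.

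Fix $(p,t) \in \D^3 \times [0,1]$. Since the isometries $\varphi_{(\cdot,t)}$ are $t$--invariant and $\Rot(d(p))$ is independent of $t$, the curve $t'\mapsto X(p,t')$ agrees, up to the fixed isometry $\varphi_{(p,t)}^{-1}\Rot(d(p))$, with the curve $\gamma_{|p|}$. In particular $\overset{.}{X}(p,t) \neq 0$ is equivalent to $\gamma_{|p|}'(t)\neq 0$, which holds by Condition~(1) of Definition~\ref{def:tameComb}. We now apply Proposition~\ref{prop:EngelCurves}.

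If $t$ is not an inflection point of $\gamma_{|p|}$, then it is not an inflection point of the curve $t'\mapsto X(p,t')$, so Condition~(1) of Proposition~\ref{prop:EngelCurves} holds and $\SD$ is an Engel structure at $(p,t)$.

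Suppose instead that $(|p|,t) \in \mathcal{I}_{\gamma_r}$, i.e.~$t$ is an inflection point of $\gamma_{|p|}$. By Condition~(2) of Definition~\ref{def:tameComb} there exist $a,b \in \R^+$, $a<b$, with $(|p|,t) \in [a,b]\times\{t\} \subset \mathcal{I}_{\gamma_r}$; thus $t$ is an inflection point of $\gamma_r$ for every $r$ in a neighbourhood of $|p|$ inside $[a,b]$. By Condition~(c) of Definition~\ref{def:EngelComb} the curve $\gamma_r$ is $C^\infty$--tangent to the horizontal equator $\{z=0\}$ at each of its inflection points, so on a neighbourhood of the radius--$|p|$ sphere inside $\D^3\times\{t\}$ the map $X_t$ takes values in the equator $\S^2 \cap \{z=0\}$ and is $C^\infty$--tangent to it. Under $\varphi^{-1}_{(\cdot,t)}$ this equator is identified with the contact plane field $\xi$ of the underlying angular structure; hence on $\Op(p)\times\{t\}$ both $X_t$ and $\overset{.}{X}_t$ are tangent to $\xi$. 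Since $\overset{.}{X}(p,t)\neq 0$ and, as in the proof of Proposition~\ref{prop:EngelCurves}, $\overset{.}{X}_t$ is not colinear with $X_t$, the vectors $X_t$ and $\overset{.}{X}_t$ span a $2$--plane contained in $\xi$ at $p$, and therefore $\langle X_t,\overset{.}{X}_t\rangle = \xi$ on $\Op(p)\times\{t\}$. As $(\D^3,\xi)$ is a contact structure, $\langle X_t,\overset{.}{X}_t\rangle$ is a contact structure on $\Op(p)\times\{t\}$, so Condition~(2) of Proposition~\ref{prop:EngelCurves} holds and $\SD$ is an Engel structure at $(p,t)$.

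In either case $\SD$ is an Engel structure at $(p,t)$. As $(p,t)$ was arbitrary, the Engel shell $\SD(\gamma_r)$ is solid.
\end{proof}
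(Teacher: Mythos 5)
Your argument follows the paper's proof: Condition (1) of tameness gives $\overset{.}{X}\neq 0$, Condition (1) of Proposition \ref{prop:EngelCurves} handles points with $(|p|,t)\notin\mathcal{I}_{\gamma_r}$, and tameness together with Condition (c) of Definition \ref{def:EngelComb} is used to invoke Condition (2) of Proposition \ref{prop:EngelCurves} on the inflection locus. There is, however, one overstated step in the inflection case: from $(|p|,t)\in[a,b]\times\{t\}\subset\mathcal{I}_{\gamma_r}$ you conclude that $X_t$ is tangent to the equator ``on a neighbourhood of the radius--$|p|$ sphere'' and hence that $\langle X_t,\overset{.}{X}_t\rangle=\xi$ on $\Op(p)\times\{t\}$. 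Tameness does not place $|p|$ in the interior of $[a,b]$: if $|p|=a$ or $|p|=b$, the equality $\langle X_t,\overset{.}{X}_t\rangle=\xi$ is only guaranteed on the closed shell $S=\{a\le|q|\le b\}$, which touches $p$ one--sidedly, so your intermediate claim is false as stated and Condition (2) of Proposition \ref{prop:EngelCurves} (contactness on an open neighbourhood of $p$) does not follow directly.

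The repair is exactly the clause the paper inserts (``the contact condition being open''): $\langle X,\overset{.}{X}\rangle$ agrees with $\xi$ on the region $S\times\{t\}$, which has non--empty interior and is the closure of that interior, so the $1$--jet of a defining $1$--form for $\langle X_t,\overset{.}{X}_t\rangle$ agrees at $p$ with that of a contact form; hence the plane field is contact at every point of $S$, and by openness of the contact condition it is contact on an open neighbourhood of $S$, in particular on $\Op(p)\times\{t\}$ even when $|p|\in\{a,b\}$. With that one observation added, your proof coincides with the paper's.
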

\begin{proof}
By Condition (1) in Definition \ref{def:tameComb}, the $2$--plane $\SD$ defined by $\gamma_r$ is non--integrable. Suppose that $(|p|,t)$ lies in the complement of $\mathcal{I}_{\gamma_r}$, then Proposition \ref{prop:EngelCurves} shows that $\SD$ is Engel at $(p,t)$.

In case $(|p|,t) \in \mathcal{I}_{\gamma_r}$, consider the interval $[a,b]$ provided by Condition (2) in Definition \ref{def:tameComb}. The set of points $(p',t)$ with $|p'| \in [a,b]$ is a region $S=\NS^2\times [a,b]\subset\D^3$ with non--empty interior. By Condition (c) in Definition \ref{def:EngelComb}, $\langle \gamma_r, \gamma_r' \rangle|_{[a,b] \times \{t\}} = \{z=0\}$, which means that the corresponding Engel shell has $\langle X, \overset{.}{X} \rangle|_{S \times \{t\}} = \xi$. The contact condition being open, we can also apply Proposition \ref{prop:EngelCurves}.
\end{proof}

\begin{figure}[h]
\centering
\includegraphics[scale=0.35]{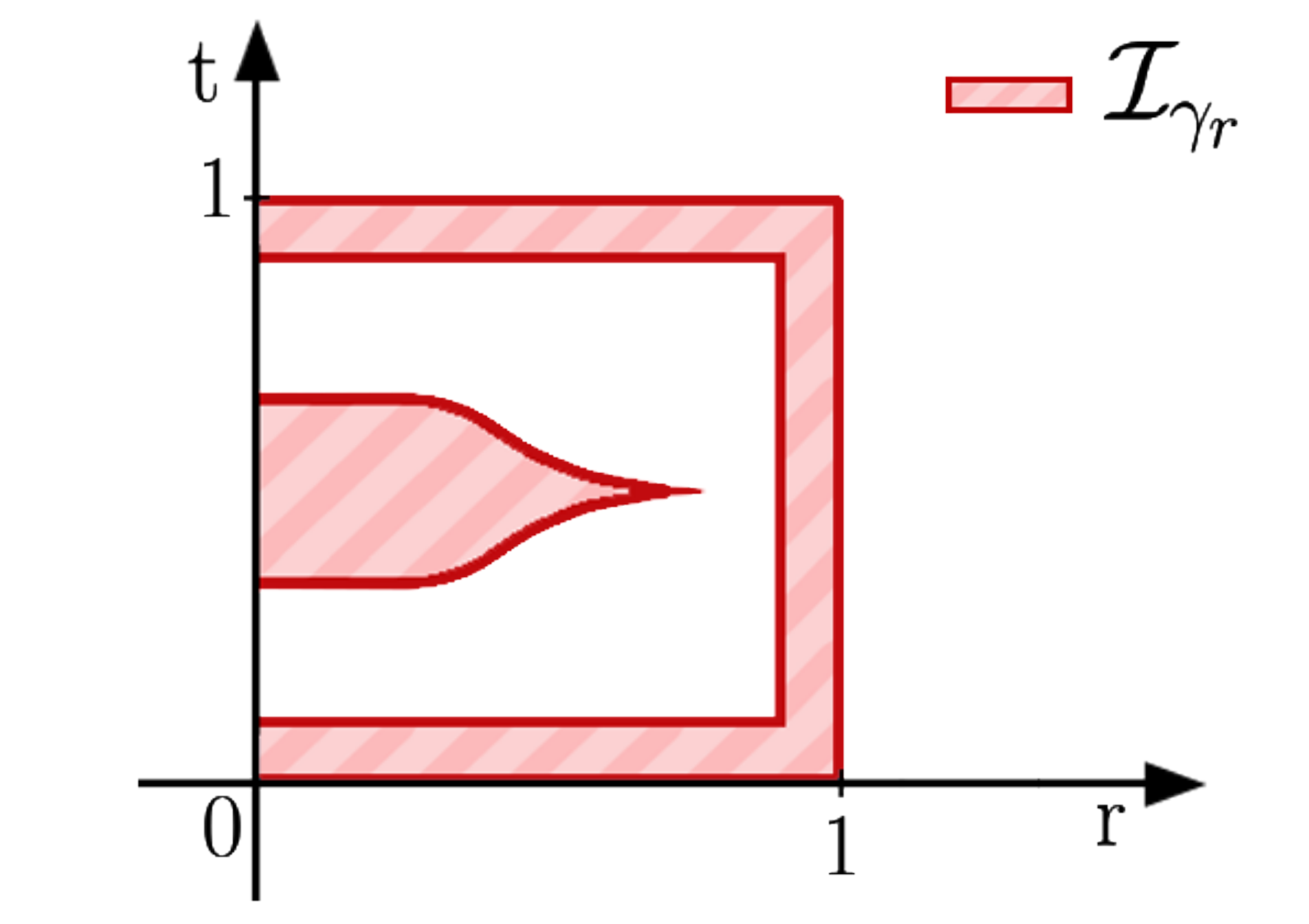}
\caption{Example of inflection points for a tame family similar to the one in the proof of Theorem \ref{thm:fill}.}
\end{figure}

In light of Proposition \ref{prop:solidTameComb}, we now focus on deforming any $6\pi$--Engel comb into a tame Engel comb.

\subsection{Reducing to tame Engel combs} \label{ssec:clover}

In this subsection, we first describe two families of curves: the kink and the four--leaf clover. These are then used in the proof of Theorem \ref{thm:fill} to deform a given $K$--Engel comb in the region $t \in [\rho,2\rho]$.

\subsubsection{The kink curve}

By definition a $K$--Engel comb $\gamma_r$ is given in $[0,1] \times [\rho,2\rho]$ by $\Rot(K(t-\rho)/\rho)e_1$, i.e. a rotation along the horizontal equator which does not depend on $p \in \D^3$. For instance, $K\geq6\pi$ results in the curves $\gamma_r$ turning more than $3$ times around the equator at constant speed as $t$ goes from $\rho$ to $2\rho$. The kink curves serve to interpolate, relative to the boundary, between a segment encircling once the horizontal equator and a short convex segment strictly contained in the upper hemisphere, see Figure \ref{fig:kink}.

For each $\theta \in [0, \pi/2]$, consider the plane given by the equation $\{\sin(\theta)(x-1) + \cos(\theta)z = 0\}$. For $\theta = 0$ this describes the plane $\{z = 0\}$ and for $\theta=\pi/2$ the vertical plane $\{x=1\}$. Considering $\theta\in[0,\pi/2)$, the intersection of these planes with the 2--sphere $\S^2$ yields the following parametrised curves
$$\beta_\theta(t) = (\sin^2(\theta)+\cos^2(\theta)\cos(t), \cos(\theta)\sin(t), \sin(\theta)\cos(\theta)(1-\cos(t))), \quad t \in [0,2\pi]. $$
The curve $\beta_0$ parametrises the equator with constant angular speed, and $\beta_{\pi/2}$ is a constant map with image the point $(1,0,0)$. The remaining curves for $\theta\in(0,\pi/2)$ are convex since they present rotational symmetry with respect to the normal axis of the corresponding plane. 
Note also that the Frenet frame remains constant at the origin of these curves: $\FF(\beta_\theta)(0) = \Id$ for $\theta \in [0,\pi/2)$. 

\subsubsection{The four-leaf clover} The geometric reason for us to introduce the four--leaf clover is that it allows to arbitrarily decrease the value of the angular function in $t=2\rho$ at the expense of deforming to convex curves in $t \in [\rho,2\rho]$. Note then that once the angular function is small enough at the point $t=2\rho$, the formal Engel structure $(\D^3 \times [2\rho,1], \SD)$ will be homotopic to a solid Engel shell.

Though the four--leaf clover is a curve on the 2--sphere, it is simpler to describe it in an affine chart.

\begin{lemma}\label{lem:affine}
The affine chart $\pi:\{(x,y,z)\in \NS^2: x>0\}\longrightarrow\R^2$, $\pi(x,y,z)=(y/x,z/x)$ maps geodesics to geodesics, and convex curves to convex curves.
\end{lemma}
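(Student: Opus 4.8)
\textbf{Proof plan for Lemma \ref{lem:affine}.}

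The plan is to exhibit the affine chart $\pi$ as (the restriction of) a central projection and to deduce both assertions from standard facts about such projections. First I would recall that the geodesics of $\NS^2 \subseteq \R^3$ are precisely the great circles, i.e.\ the intersections of $\NS^2$ with linear $2$--planes through the origin, and that a curve is convex (no inflection points, in the sense defined above) exactly when it lies locally on one side of each of its tangent geodesics; equivalently, $\gamma$ is convex iff the three vectors $\gamma$, $\gamma'$, $\gamma''$ are linearly independent at every time, which is the condition $\langle \mathfrak{t}', \mathfrak{n}\rangle \neq 0$ unwound. The chart $\pi(x,y,z) = (y/x, z/x)$ is the restriction to the open hemisphere $\{x>0\}$ of the central projection of $\R^3 \setminus \{x=0\}$ onto the affine plane $\{x = 1\}$, composed with the obvious identification $\{x=1\} \cong \R^2$, $(1,u,v) \mapsto (u,v)$. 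In particular $\pi$ is the restriction of a projective linear isomorphism between the projectivization of $\{x>0\}$ and an affine chart of $\R\mathbb{P}^2$.

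For the first assertion, a great circle in $\{x>0\}$ is $\NS^2 \cap P$ for a linear $2$--plane $P$ meeting $\{x>0\}$; under central projection this maps onto $P \cap \{x=1\}$, which is an affine line in the plane $\{x=1\}$, hence a line — that is, a geodesic — in $\R^2$. Conversely every affine line in $\R^2$ arises this way, so $\pi$ sends geodesics to geodesics bijectively. This is just the classical statement that the gnomonic projection of the sphere is a geodesic map.

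For the second assertion, the cleanest route is to use the intrinsic characterization of convexity rather than to differentiate $\pi \circ \gamma$ directly. A curve $\gamma$ in $\{x>0\} \subseteq \NS^2$ is convex iff at each time it crosses no tangent geodesic, i.e.\ iff for each $t_0$ the curve stays (locally) strictly on one side of the great circle tangent to $\gamma$ at $\gamma(t_0)$. Since $\pi$ is a homeomorphism carrying great circles to lines and tangencies to tangencies (it is a diffeomorphism, so it preserves the order of contact between the curve and its tangent geodesic), the image $\pi\circ\gamma$ stays locally on one side of each of its tangent lines precisely when $\gamma$ does; and a planar curve that lies locally on one side of every tangent line is exactly a convex curve. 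Hence $\pi$ carries convex curves to convex curves, and the same argument applied to $\pi^{-1}$ gives the converse. Alternatively, one can argue algebraically: writing $\gamma = (\gamma_1,\gamma_2,\gamma_3)$ with $\gamma_1 > 0$ and $\tilde\gamma = \pi\circ\gamma = (\gamma_2/\gamma_1, \gamma_3/\gamma_1)$, a direct computation shows that the $2\times 2$ determinant $\det(\tilde\gamma', \tilde\gamma'')$ equals $\gamma_1^{-3}\det(\gamma,\gamma',\gamma'')$ up to a positive factor, so the planar non--inflection condition $\det(\tilde\gamma',\tilde\gamma'')\neq 0$ is equivalent to $\det(\gamma,\gamma',\gamma'')\neq 0$, the spherical non--inflection condition.

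The main obstacle — really the only point requiring care — is to pin down the precise meaning of "convex" on each side of the map and to check that it is the projectively invariant one: convexity of spherical curves is defined via the Frenet normal $\mathfrak{n}$, which is a metric notion, so a priori it is not obvious that it is preserved by the non--isometric map $\pi$. Resolving this amounts to the observation above that the spherical non--inflection condition is equivalent to the purely linear--algebraic condition $\det(\gamma,\gamma',\gamma'')\neq 0$ (independence of position, velocity, acceleration), which \emph{is} manifestly projectively natural; once this reformulation is in hand, both claims are immediate. I would therefore organize the write--up around first proving this determinant reformulation of convexity, and then reading off both statements from the fact that $\pi$ is a geodesic--preserving projective chart.
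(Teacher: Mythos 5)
Your proposal is correct and follows essentially the same route as the paper: geodesics are handled via the correspondence between great circles and planes through the origin (gnomonic projection), and convexity is preserved because it can be rephrased as a statement about order of contact with tangent geodesics (equivalently the projectively natural condition $\det(\gamma,\gamma',\gamma'')\neq 0$), which the chart preserves. Your write--up simply supplies the details, including the determinant computation, that the paper's two--line proof leaves implicit.
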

\begin{proof}
The map $\pi$ is readily seen to preserve geodesics from the correspondence between geodesics and planes passing through the origin. Convex curves are also preserved because convexity can be defined in terms of the order of contact with the corresponding geodesics.
\end{proof}

The parametrized plane curve $f(t)=(\cos(t)\sin(2t),\sin(t)\sin(2t))$, $t \in [0,2\pi]$, which we call the four--leaf clover, is convex and by Lemma \ref{lem:affine}, so is the curve $(\pi^{-1}\circ f)\subseteq\S^2$. Figure \ref{fig:clover} depicts the clover.

Let us reparametrize the resulting curve to $\kappa(t) = \pi^{-1} \circ f(2\pi t)$, and also reparametrize $\beta(t) = \beta_\theta(6\pi t)$, for a arbitrary but fixed $\theta \in (0, \pi/2)$. Then the curves $\kappa$ and $\beta$ are smoothly homotopic as curves and their Frenet maps agree at the point $t=0$, hence Theorem \ref{thm:Little} implies the following Lemma:

\begin{lemma} \label{lem:kink2clover}
The curves $\beta(t)$ and $\kappa(t)$ are homotopic through a smooth family $\tau_s(t)$ of convex curves, $s \in [0,1]$, with Frenet frames $\mathfrak{F}(\tau_s)(0) = \Id, \forall s\in [0,1]$.
\end{lemma}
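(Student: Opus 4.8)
The plan is to deduce the Lemma from Little's classification, Theorem \ref{thm:Little}: I would verify that $\beta$ and $\kappa$ are convex closed curves that both lie in the \emph{third} component of the space of convex curves in $\NS^2$ --- i.e. non--embedded, with non--trivial Frenet class --- and then correct the resulting connecting path by an $SO(3)$--normalisation so that the Frenet frame at the basepoint is constantly the identity.

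\emph{Convexity and non--embeddedness.} The curve $\beta = \beta_\theta(6\pi\,\cdot\,)$ is a thrice--traversed planar section of $\NS^2$ by a plane missing the origin, hence convex by the rotational symmetry about the axis of that plane already noted before the Lemma; and $\kappa = \pi^{-1}\circ f(2\pi\,\cdot\,)$ is convex because the four--leaf clover $f$ is a convex plane curve and $\pi$ preserves convexity by Lemma \ref{lem:affine}. Neither curve is embedded: $\beta$ runs three times around a fixed circle, while all four petals of $f$ pass through the origin, so $\kappa$ has a point of multiplicity four.

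\emph{Frenet class.} A single traversal of a round circle in $\NS^2$ has non--trivial Frenet class: on a great circle $\FF$ is literally a full rotation about the fixed normal axis, which is the generator of $\pi_1(SO(3)) = \mathbb{Z}/2$, and an arbitrary round circle is joined to a great circle through embedded convex curves. Hence $\beta$, such a circle traversed three times, represents $3 \equiv 1 \pmod 2$, the non--trivial class. For $\kappa$, the clover $f$ lies in the unit disc of the affine chart $\pi$, so the Frenet class of $\pi^{-1}\circ f$ equals the parity of the rotation index of $f$ as a plane curve; differentiating $f$ and exploiting the planar symmetry $f(t+\tfrac\pi2) = \Rot(-\tfrac\pi2)f(t)$ one sees the tangent direction turns by $\tfrac{3\pi}{2}$ on each quarter--period, so the rotation index is $3$, again odd. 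Thus $\beta$ and $\kappa$ both belong to component (3) of Theorem \ref{thm:Little}, which yields a smooth family $\tilde\tau_s$ of convex curves with $\tilde\tau_0 = \beta$ and $\tilde\tau_1 = \kappa$.

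\emph{Normalisation, and the main difficulty.} A direct computation, recorded in the statement, gives $\FF(\beta)(0) = \FF(\kappa)(0) = \Id$. Setting $A_s = \FF(\tilde\tau_s)(0) \in SO(3)$, a smooth path with $A_0 = A_1 = \Id$, and using that $SO(3)$ acts on $\NS^2$ by orientation--preserving isometries (hence preserves convexity and satisfies $\FF(A^{-1}\!\cdot\gamma)(0) = A^{-1}\FF(\gamma)(0)$), the family $\tau_s := A_s^{-1}\cdot\tilde\tau_s$ is again a smooth path of convex curves from $\beta$ to $\kappa$, now with $\FF(\tau_s)(0) = \Id$ for all $s$. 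The only step that is not essentially formal is the Frenet--class bookkeeping in the third paragraph: pinning down the rotation index of the four--leaf clover and justifying that its parity is exactly the $\pi_1(SO(3))$--invariant governing Little's theorem, together with the minor point that $\beta_\theta$ need not be contained in a single affine chart yet still carries the round circle's Frenet class.
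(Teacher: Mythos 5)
Your proof is correct and takes essentially the same route as the paper: connect the two curves inside a single component of Little's classification and then correct the family by $\tau_s(t)=[\FF(\tau_s')(0)]^{-1}\tau_s'(t)$, which is exactly the paper's normalisation. The only difference is cosmetic — the paper identifies the common component by noting that both affine projections have Gauss maps of winding number $3$, whereas you verify the two defining properties of component (3) of Theorem \ref{thm:Little} directly (non--embeddedness and odd Frenet class, via the rotation index $3$ of the clover and the thrice--traversed circle), which amounts to the same check and is, if anything, slightly more careful about $\beta_\theta$ not lying in the affine chart.
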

\begin{proof}
Both curves $\beta$ and $\kappa$ lie in the same connected component of the space of convex curves since their images by $\pi$ have Gauss maps with winding number $3$. Theorem \ref{thm:Little} provides a smooth family of convex curves $f_s$ joining $f_0 = \beta$ and $f_1 = \kappa$. The family $\tau_s(t)=[\FF(f_s)(0)]^{-1}f_s(t)$ satisfies all the required conditions.
\end{proof}

\begin{figure}
\begin{minipage}[hl]{0.45\textwidth}
\centering
\includegraphics[scale=0.35]{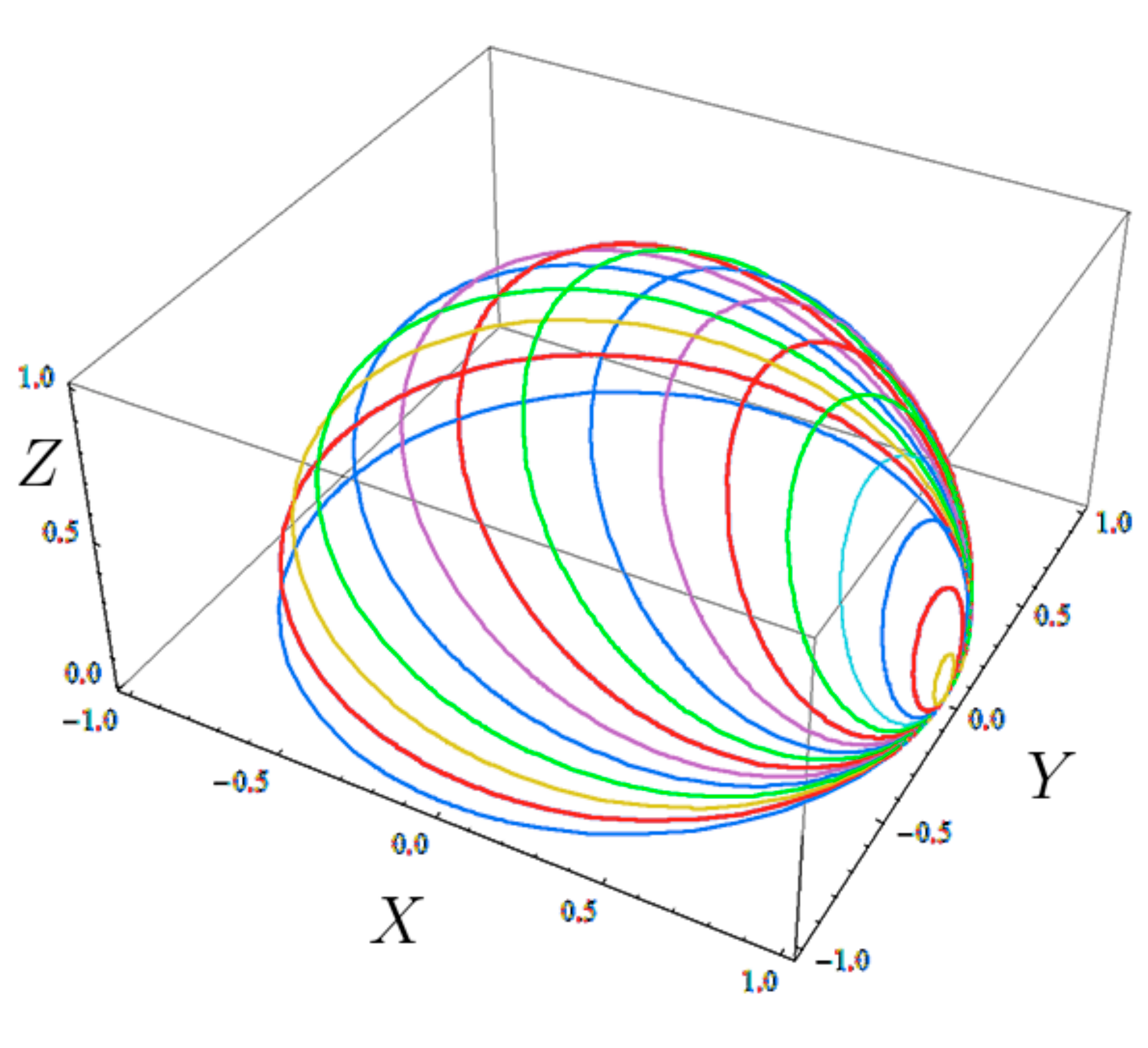}
\caption{The curves $\beta_\theta$ for different values of the parameter $\theta$.}
\label{fig:kink}
\end{minipage}
\begin{minipage}[hr]{0.45\textwidth}
\centering
\includegraphics[scale=0.45]{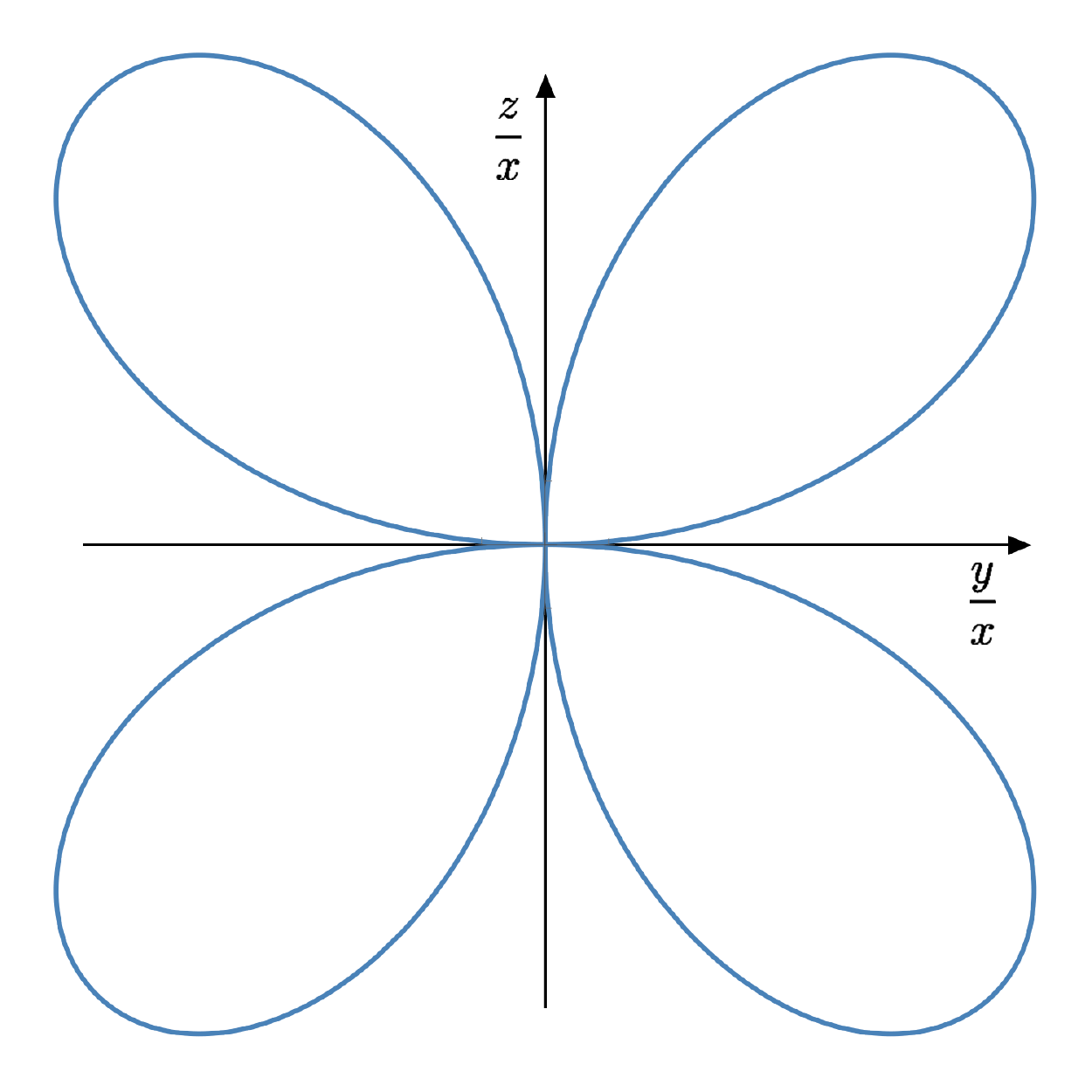}
\caption{The curve $\pi \circ \kappa$.}
\label{fig:clover}
\end{minipage}
\end{figure}

\subsection{The proof of Theorem \ref{thm:fill}}

The argument uses three technical lemmas whose statements and proofs are postponed to the following subsection; their geometric content is however intuitive. Lemmas \ref{lem:glueing1} and \ref{lem:glueing2} state that a family of curves tangent to the equator at a given point can be deformed to be $C^\infty$--tangent to the equator at the point while those that were convex remain so away from it. Lemma \ref{lem:stretch} states that a segment that is $C^\infty$--tangent to the equator on one of its ends can be deformed so that it first parametrizes the original segment and then an arbitrarily long piece of equator. 

Let us now start the argument for Theorem \ref{thm:fill}. Consider $c:\D^3\times[0,1]\longrightarrow\R$ a $6\pi$--radial function. The curves corresponding to its associated $6\pi$--Engel comb $\gamma_r^c$ are tangent to the horizontal equator of the 2--sphere. In case that $c(p,1)-c(p,0)$ is positive on $\D^3$ we can apply Lemma \ref{lem:positive} and we obtain a solid Engel shell. Otherwise the curves $\gamma^c_r$ have points where $(\gamma^c_r)' = 0$.

In short, the argument goes as follows. The only a priori information we have on $\gamma_r^c$ is the existence of a region $t\in[\rho,2\rho]$ in which the curves wind around the horizontal equator three times. The deformation provided by the kink curves $\beta_\theta$ modifies these three laps around the equator into a curve with three kinks. The curve with three kinks can be homotoped to the four--leaf clover curve, which now can be used to arbitrarily decrease the value of $c(p,2\rho)$. The decreasing process consists of clockwise pulling the two left--most leaves of the four--leaf clover around the equator as many times as needed. This process is illustrated in Figure \ref{fig:movie}.


This geometric explanation is now detailed with the corresponding analysis. First the curve $\beta(t) = \gamma_r^c(\rho(1+t)) = \Rot(6\pi t)e_1$, $t\in[0,1]$, is deformed to the four--leaf clover; this is achieved by applying Lemma \ref{lem:kink2clover} to obtain the family $\tau_s:\NS^1:=[0,1]/\{0 \simeq 1 \}\longrightarrow\NS^2$, $s\in[0,1]$, which we understand as maps with domain the interval $[0,1]$. The family $\tau_s$ can be modified at its ends to glue smoothly with a curve tangent to the equator; this is done by applying Lemma \ref{lem:glueing1} to $\tau_s$ at times $t\in\{0,1\}$, which yields a $[0,1]$--family of curves $f_s$ satisfying that: 
\begin{wideitemize}
\item[-] $f_0(t)=\beta(t)=\Rot(6\pi t)e_1$, for $t\in[0,1]$.
\item[-] There exists a small $\varepsilon>0$ such that $f_s(t)=\tau_s(t)$, for $t\in [\varepsilon, 1-\varepsilon]$ and $s\in[0,1]$.
\item[-] For $s\in(0,1]$, $f_s(t)$ is convex for $t\in(0,1)$ and it has an $\infty$--order of contact with the equator $\{z=0 \}$ at the endpoints $f_s(0)$ and $f_s(1)$.
\item[-] The Frenet frame in the midpoint of the four--leaf clover is
$$\mathfrak{F}(f_1(1/2))= \left(
\begin{array}{ccc}
1&0&0\\
0&-1&0\\
0&0&1
\end{array} \right).$$
\end{wideitemize}
This is the deformation in the region $s\in[0,1]$, we now define a deformation for $s\in[1,2]$.

Note that $t=1/2$ is the time in which the four-leaf clover $\tau_1$ has turned and is pointing in the opposite direction. In order to clockwise pull the two left--most leaves of the four--leaf clover, we first need to flatten the point $t=1/2$ so that it has a tangency of $\infty$--order with the equator: this is done by applying Lemma \ref{lem:glueing2} to the curve $f_1$ at $t=1/2$. This provides a family of curves $f_s: [0,1] \longrightarrow \NS^2$, $s\in [1,2]$ such that:
\begin{wideitemize}
\item[-] There exists a small $\varepsilon>0$ such that $f_s(t)=f_1(t)$, for $t\not \in [1/2-\varepsilon, 1/2+\varepsilon]$.
\item[-] The curves $f_s(t)$, $s\in[1,2)$ are convex if and only if $t\in(0,1)$, and the curve $f_2(t)$ is convex if and only if $t\in(0,1)\setminus\{1/2\}$. The inflection points of $f_s$, $s \in [1,2]$, are $\infty$--order tangencies with the equator $\{z=0\}$.
\item[-] The Frenet frame in the midpoint of these modified four--leaf clovers remains constant:
$$\mathfrak{F}(f_s(1/2))= \left(
\begin{array}{ccc}
1&0&0\\
0&-1&0\\
0&0&1
\end{array} \right).$$
\end{wideitemize}

The $\infty$--order tangency point that we have introduced at $t=1/2$ allows us to stretch the point into an arbitrarily large interval (and hence clockwise pulling the two left--most leaves of the flattened four--leaf clover). This deformation will occur for those values of the parameter $s \in [2,3]$. Consider an arbitrary constant $C<0$ to be chosen later which captures the amount of stretching and clockwise pulling. 

Consider a small $\varepsilon\in\R^+$ and define $\varepsilon_s = (s-2)\varepsilon$. By applying Lemma \ref{lem:stretch} to the flattened four--leaf clover $f_2: [0,1]\longrightarrow\NS^2$ we obtain a family of curves $f_s: [0,1]\longrightarrow\NS^2$, $s\in [2,3]$, satisfying:
\begin{wideitemize}
\item[-] $f_s(t/(1-2\varepsilon_s))=f_2(t)$, for $t\in [0, 1/2-\varepsilon_s]$.
\item[-] $f_s(t)=\Rot(C(s-2))\cdot f_2((t-2\varepsilon_s)/(1-2\varepsilon_s))$, for $t\in [1/2+\varepsilon_s,1]$.
\item[-] The curve $f_s(t)$ negatively winds around the horizontal equator $\{z=0\}$ in the interval $t\in [1/2-\varepsilon_s,1/2+\varepsilon_s]$ with non--vanishing speed.
\end{wideitemize}
The first two conditions just reparametrize curve $f_2(t)$ away from $\Op(\{t=1/2\})$ to a curve $f_3(t)$ such that the beginning remains the same and the end is moved by a clockwise rotation of angle $C$. The third condition is the clockwise--pulling process along the horizontal equator. Figure \ref{fig:movie} describes the family $f_s$, $s\in[0,3]$.

\begin{figure}[bp]
\centering
\includegraphics[scale=0.8]{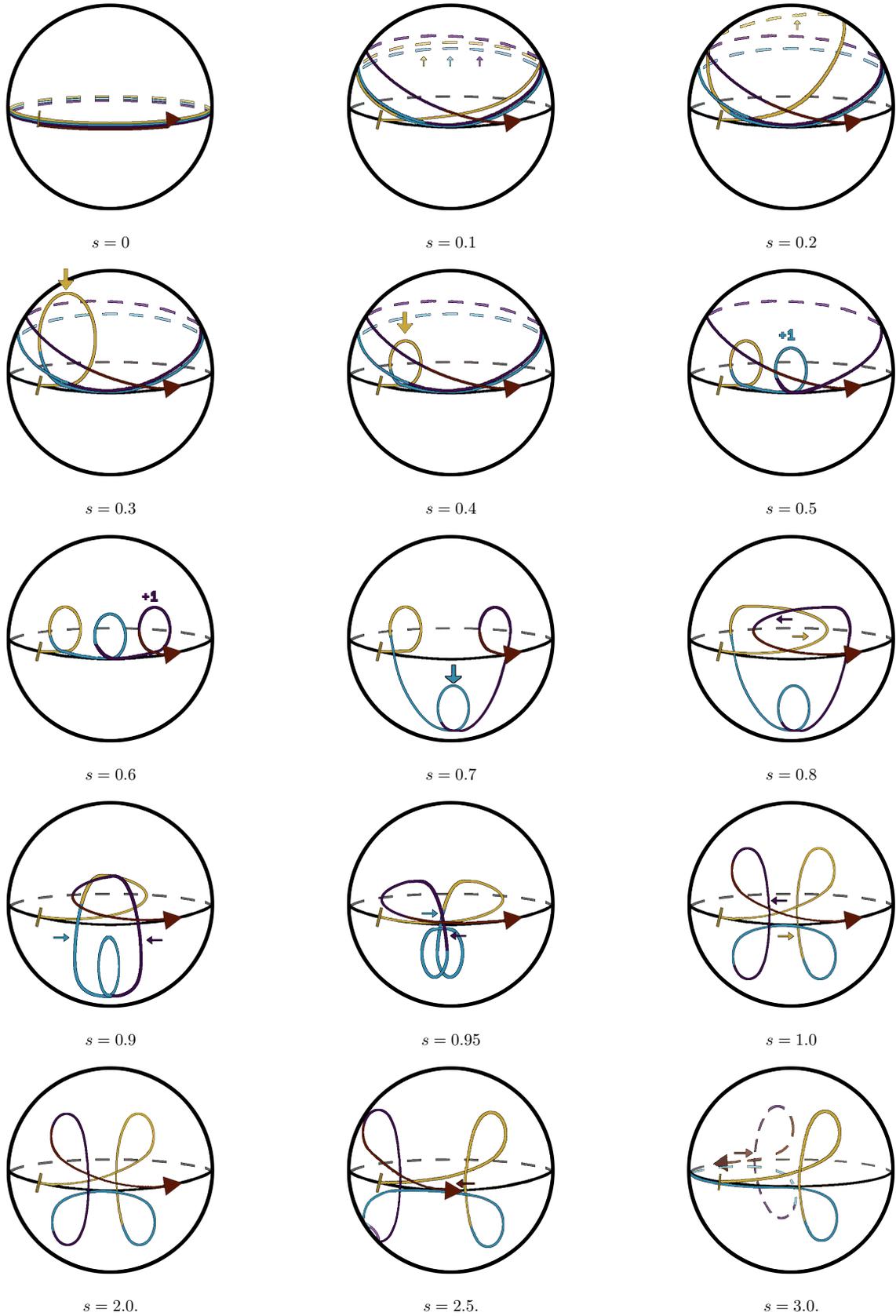}
\caption{The family of curves $f_s$ from the proof of Theorem \ref{thm:fill}.} \label{fig:movie}
\end{figure}

Let us now insert the deformation given by the $[0,3]$--family of curves $f_s$ inside the initial $6\pi$--Engel comb $\gamma_r^c$. The curves in the Engel comb $\gamma_r^c$ have a specified behaviour on the interval $t\in[\rho,2\rho]$ and this is the interval where the deformation provided by $f_s$ is to be inserted. We now provide the analytical details for this.

Consider a small $\delta\in\R^+$ such that the $6\pi$--radial function $c$ is increasing for $|p|\in [1-3\delta,1]$. Define a smooth decreasing cut--off function $\chi:[0,1]\longrightarrow[0,3]$ so that the family $f_{\chi(r)}$ is smooth in the parameter and
$$ \chi(t)=3 \text{ for } t\in[0,1-3\delta], \quad \chi(t)=0 \text{ for } t \in [1-\delta/3,1]. $$

The point now is to replace the initial family of curves of the $6\pi$--Engel comb $\gamma_r^c$ by the family of curves $F_r = f_{\chi(r)}((t-\rho)/\rho)$ in the interval of time $t \in [\rho, 2\rho]$. Observe that they do not glue immediately, since they have differing values at the points $t=\{\rho,2\rho\}$. $|F_r(\rho) - \gamma_r^c(\rho)|$ can be made arbitrarily small according to Lemma \ref{lem:glueing1} and, since $\gamma_r^c$ describes a solid angular shell for $t \in [0,\rho]$, it can be perturbed slightly to allow for the smooth glueing of both families while still describing a solid angular shell for $t \in [0,\rho]$. $F_r(2\rho)$ and $\gamma_r^c(2\rho)$ differ by a rotation of positive angle in the equator and hence we can stretch $\gamma_r^c$ to glue both families at $t=2\rho$. The resulting Engel comb $\Gamma^{\widetilde{c}}_r$ is homotopic through Engel combs to $\gamma_r^c$ and thus provides a deformation of the initial Engel shell.

The Engel shell associated to $\Gamma^{\widetilde{c}}_r$ can be made solid. Indeed, the resulting formal Engel structure is still an Engel structure in the region $[0,2\rho]$ as a consequence of Proposition \ref{prop:solidTameComb}. In the region $t \in [2\rho,1]$ it admits an angle function $\widetilde c(p,t)$ which satisfies $\widetilde c(p,2)\rho)=c(p,2\rho)-C$ (as a consequence of the clockwise--pulling of the two left--most leaves) and $\widetilde c(p,1)=c(p,1)$. The constant $C\in(-\infty,0)$ can then be chosen such that $\widetilde c(p,2\rho) < \widetilde c(p,1)$, and then Lemma \ref{lem:positive} provides a deformation of the Engel shell induced by $\Gamma^{\widetilde{c}}_r$ in the region $\D^3 \times [2\rho,1]$ to a solid Engel shell. This concludes the deformation of the initial $6\pi$--Engel comb into a solid Engel shell and thus proves the statement of Theorem \ref{thm:fill}.\hfill$\Box$

\subsection{Technical lemmas}\label{ssec:Glueing} In the proof of Theorem \ref{thm:fill} we have used two geometric facts regarding deformations of curves in the 2--sphere: 
modification of a horizontal inflection point into an $\infty$--order point of contact with the horizontal equator and the stretching of an $\infty$--order point of contact into an arbitrarily large segment. For completeness, we now include the statements and part of the analytic details of their proofs.

\subsubsection{Two lemmas on achieving $\infty$--order of contact}

The following two lemmas are quite similar in nature, both concerning deformations of a family of curves near a point in order to create $\infty$--order of contact with a certain curve (and at the same time preserving any existing convexity).

\begin{lemma}\label{lem:glueing1}
Consider a smooth family of curves $\gamma_s:[0,1]\longrightarrow\NS^2$, $s \in K$, where $K$ is a compact space. Suppose that the curves $\gamma_s$ are either convex or reparametrizations of an equatorial arc, and the initial Frenet frame is $\mathfrak{F}(\gamma_s)(0)=\Id$ $($hence the initial points $\gamma_s(0)=(1,0,0)$ lie in the horizontal equator$)$.

For any $\varepsilon\in\R^+$ small enough, there is a smooth family of curves $\eta_s: [0,1]\longrightarrow\NS^2$, $s \in K$, satisfying:

\begin{wideitemize}
\item[1.] $\|\eta_s-\gamma_s \|_{C^1} \leq \varepsilon$, $\eta_s|_{[\varepsilon, 1]}=\gamma_s|_{[\varepsilon, 1]}$, and $\FF(\eta_s)(0) = \Rot(-\varepsilon)$.
\item[2a.] If the curve $\gamma_s$ is convex, the curve $\eta_s$ is convex for $t\in(0,1]$ and $\eta_s(0)$ is an $\infty$--order tangency with the horizontal equator $\{z=0\}$.
\item[2b.] If the curve $\gamma_s$ is  a reparametrization of an equatorial arc, so is the curve $\eta_s$.
\end{wideitemize}
\end{lemma}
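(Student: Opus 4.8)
The plan is to construct $\eta_s$ by modifying $\gamma_s$ only on the interval $[0,\varepsilon]$, replacing the parametrization of the curve near $t=0$ by a reparametrization that approaches the endpoint $\gamma_s(0)=(1,0,0)$ with an $\infty$--order flattening against the equator $\{z=0\}$, without disturbing the trace of the curve. The key observation is that convexity is a property of the \emph{image} together with its tangent line behaviour, so a reparametrization cannot destroy it in the interior; the delicate part is the behaviour precisely at the endpoint, where we are allowed to let the speed vanish to infinite order and where we must control the order of contact with the equator.

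\textbf{Step 1: normal form near the endpoint.} First I would use the hypothesis $\FF(\gamma_s)(0)=\Id$ to pass to the affine chart $\pi$ of Lemma \ref{lem:affine}, in which the endpoint becomes the origin, the equator becomes a straight line (the $u$--axis, say), and convex curves remain convex. In this chart write $\gamma_s$ near $t=0$ as a graph $v=g_s(u)$ over the $u$--axis with $g_s(0)=g_s'(0)=0$ (tangency to the equator at the endpoint) and, in the convex case, $g_s''(u)>0$ for $u>0$ small; in the equatorial--arc case $g_s\equiv 0$. Because $K$ is compact these estimates are uniform in $s$, so there is a $u_0>0$, independent of $s$, on which this normal form is valid; choose $\varepsilon$ small enough that $\gamma_s([0,\varepsilon])$ lies in this region.

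\textbf{Step 2: reparametrize to flatten.} Next I would pick a smooth, strictly increasing function $\phi_\varepsilon:[0,\varepsilon]\to[0,\varepsilon]$ which is the identity near $t=\varepsilon$, equals $0$ to infinite order at $t=0$ (e.g. built from $e^{-1/t}$ type bump functions rescaled to land in $[0,\varepsilon]$), and depends smoothly on no further parameters; set $\eta_s(t)=\gamma_s(\phi_\varepsilon(t))$ for $t\in[0,\varepsilon]$ and $\eta_s(t)=\gamma_s(t)$ for $t\in[\varepsilon,1]$. This is smooth in $(s,t)$, agrees with $\gamma_s$ on $[\varepsilon,1]$, has the same image, and hence is convex on $(0,1]$ exactly when $\gamma_s$ was, and is a reparametrization of an equatorial arc exactly when $\gamma_s$ was; this gives (2a) and (2b) in the interior. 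For the $\infty$--order contact at $t=0$: in the graph coordinates, $\eta_s$ near $0$ is $t\mapsto(\phi_\varepsilon(t), g_s(\phi_\varepsilon(t)))$, and since $\phi_\varepsilon$ vanishes to infinite order and $g_s$ vanishes to second order, the $v$--component vanishes to infinite order in $t$ while the $u$--component does as well; dividing, all derivatives of $\eta_s$ relative to the equator vanish at $t=0$, which is precisely the $\infty$--order tangency. The adjustment of the endpoint Frenet frame to $\Rot(-\varepsilon)$ is arranged by composing with a fixed small rotation supported near $t=0$ (a further $s$--independent modification of the normal form), or more simply by declaring $\varepsilon$ to be the angular position along the equator at which we re--attach; this is a cosmetic bookkeeping step. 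Finally $\|\eta_s-\gamma_s\|_{C^1}\le\varepsilon$ follows because the modification is supported in $[0,\varepsilon]$ and, there, $\phi_\varepsilon(t)$ stays within $\varepsilon$ of $t$ and its derivative can be kept bounded; shrinking $\varepsilon$ (and choosing $\phi_\varepsilon$ with derivative bounded independently of $\varepsilon$, e.g. bounded by $2$) makes the $C^1$--distance $O(\varepsilon)$, which after relabelling is $\le\varepsilon$.

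\textbf{Main obstacle.} The point requiring genuine care is the claim that the reparametrization produces \emph{infinite} order of contact with the equator and not merely high order: this forces the choice of $\phi_\varepsilon$ to be flat to infinite order at $0$, which in turn is why one cannot simply slow down by a polynomial factor. One must also check that this flatness is compatible with keeping $\eta_s$ a genuinely smooth curve on $[0,1]$ (not just $C^\infty$ on $(0,1]$) — this is automatic since $\phi_\varepsilon$ is smooth on $[0,\varepsilon]$ and glues smoothly with the identity — and that convexity is not accidentally lost at the single point $t=0$; but convexity is only asserted on $(0,1]$, so no issue arises there. Everything else is a uniform-in-$s$ application of the compactness of $K$ together with the chart of Lemma \ref{lem:affine}.
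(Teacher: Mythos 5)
Your construction never changes the curve itself, only its parametrization, and this is where the argument breaks down. The point of the lemma (and of its use in the proof of Theorem \ref{thm:fill}) is a \emph{geometric} flattening: the image of $\eta_s$ near its initial point must osculate the equator $\{z=0\}$ to infinite order at an immersed point, so that it can later be glued $C^\infty$--smoothly to an equatorial arc while the resulting comb stays tame (non--vanishing speed, Definition \ref{def:tameComb}(1), which feeds into the hypothesis $\overset{.}{X}\neq 0$ of Proposition \ref{prop:EngelCurves}). Setting $\eta_s=\gamma_s\circ\phi_\varepsilon$ leaves the image untouched, so its contact with the equator at the endpoint is still of finite order; the ``infinite order of vanishing'' you obtain is only an artifact of $\eta_s'$ vanishing to infinite order at $t=0$. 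That zero of the velocity is fatal: the curve is no longer immersed at the endpoint, the Frenet frame there is undefined except as a limit (and that limit is $\Id$, not $\Rot(-\varepsilon)$), and a comb built from such a curve fails the Engel condition at the corresponding points. Moreover, the condition $\FF(\eta_s)(0)=\Rot(-\varepsilon)$ forces the initial point to be displaced to $\Rot(-\varepsilon)e_1$; dismissing this as bookkeeping misses that adding such an equatorial tail while preserving convexity and $C^1$--closeness is exactly the nontrivial content of the statement (a $t$--dependent rotation ``supported near $t=0$'' does not obviously preserve convexity). Finally, the claimed estimate $\|\eta_s-\gamma_s\|_{C^1}\leq\varepsilon$ fails: at points where $\phi_\varepsilon'\approx 0$ one has $|\eta_s'-\gamma_s'|\approx|\gamma_s'|$, which is bounded below independently of $\varepsilon$ since the curves $\gamma_s$ are immersed.

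The paper's proof avoids all of this by modifying the height of the curve rather than its parametrization. In the affine chart of Lemma \ref{lem:affine} the curve near the endpoint is written as a graph $(t,f_s(t))$ over the image of the equator, and one precomposes \emph{only the height} $f_s$ with an increasing function $\chi$ that is convex and infinitely flat at the (shifted) left endpoint, while the horizontal coordinate continues to move with positive speed. The new curve is therefore still an immersed graph, convexity is preserved because a composition of increasing convex functions is convex, the domain shift $t\mapsto t(1+\varepsilon/\delta)-\varepsilon$ moves the initial point back along the equator to approximately $\Rot(-\varepsilon)e_1$ with genuine $\infty$--order contact of the image with $\{z=0\}$ there, and the modification is $C^1$--small because $\chi$ equals the identity away from the endpoint. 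If you want to salvage your approach you must change the image near $t=0$ in this way; no reparametrization alone can produce the required tangency.
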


\begin{proof}
Here we use Lemma \ref{lem:affine} to translate this into a problem of real--valued functions; the affine chart for the 2--sphere is $\pi:H^2\longrightarrow\R^2$. Consider $\delta\in(0,\varepsilon)$ such that $\gamma_s|_{[0,\delta]} \subseteq H^2$ and $(\pi \circ \gamma_s)|_{[0,\delta]}$ are graphical over the horizontal line $\pi(\{z=0\})\subseteq\R^2$. The image of the family of curves $\gamma_s$ can be expressed as a family of plane curves $(t,f_s(t))\subseteq\R^2$, with $f_s:[0,\delta]\longrightarrow\R^+$ a family of smooth functions. It now suffices to appropiately reparametrize the $y$--coordinate $f_s(t)$.

Construct an increasing cut--off function $\chi_1:[-\varepsilon,\delta]\longrightarrow[0,\delta]$ satisfying:
$$\chi^{(k)}(-\varepsilon)=0\mbox{ for }k\in\mathbb{N},\quad\chi''|_{[-\varepsilon,\delta/2)}>0,\mbox{  and  }\chi(t)|_{[\delta/2, \delta]}=t.$$

Since the composition of increasing convex functions is also convex, the family of curves
$$\eta_s: [0,\delta]\longrightarrow H^2,\quad \eta_s(t) = \pi^{-1} \circ (t, f_s\circ \chi(t(1+\varepsilon/\delta)-\varepsilon))$$
preserves any existing convexity and it can be glued with the family of curves $\gamma_s|_{[\delta/2,1]}$; we can then reparametrize in the interval $[\delta/2,\delta]$ to obtained the required family of curves.
\end{proof}

In this same vein, we can smoothly flatten a given point in a convex curve to $\infty$--order of contact with respect to an equator (preserving the Frenet frame at that point). The precise statement reads as follows:

\begin{lemma} \label{lem:glueing2}
Consider a smooth convex curve $\gamma_0:[-1,1]\longrightarrow\NS^2$ with $\mathfrak{F}(\gamma_0)(0)=\Id$ in its midpoint.
For any $\varepsilon\in\R^+$ small enough, there exist smooth curves $\gamma_s: [0,1]\longrightarrow\NS^2$, $s\in[0,1]$, such that
\begin{wideitemize}
\item[1.] $\|\gamma_s-\gamma_0\|_{C^1} \leq \varepsilon$, $\gamma_s|_{[-1,-\varepsilon]\cup[\varepsilon,1]}=\gamma|_{[-1,-\varepsilon]\cup[\varepsilon,1]}$, and $\FF(\gamma_s)(0) = \Id$.
\item[2.] For $s\in(0,1]$, the curves $\gamma_s$ are convex at $t\in[-1, 0) \cup (0,1]$ and the points $\gamma_s(0)$ have $\infty$--order of contact with the horizontal equator $\{z=0\}$.
\end{wideitemize}
\end{lemma}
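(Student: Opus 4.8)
The plan is to reduce the problem, via the affine chart of Lemma \ref{lem:affine}, to a statement about flattening a single interior point of a family of convex plane curves to an $\infty$--order tangency with a horizontal line, which is essentially the same analytic trick used in Lemma \ref{lem:glueing1} but performed at an interior point rather than an endpoint. First I would note that since $\mathfrak{F}(\gamma_0)(0) = \Id$, the curve $\gamma_0$ passes through $(1,0,0)$ at $t=0$ with unit tangent $\partial_y$ and normal in the horizontal equatorial direction; shrinking $\varepsilon$ if necessary, the arc $\gamma_0|_{[-\varepsilon,\varepsilon]}$ lies in the domain $H^2 = \{x > 0\}$ of the affine chart $\pi$, and $(\pi \circ \gamma_0)|_{[-\varepsilon,\varepsilon]}$ is a graph $(u, f_0(u))$ over the horizontal axis $\pi(\{z=0\})$ with $f_0(0)=0$, $f_0'(0)=0$, and $f_0''(0)>0$ by convexity.

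Next I would construct the deformation purely in terms of the function $f_0$. The idea is to pick a smooth symmetric cut--off $\lambda_s: [-\varepsilon,\varepsilon] \to [-\varepsilon,\varepsilon]$ depending on $s \in [0,1]$, with $\lambda_0 = \operatorname{id}$, equal to the identity near $\pm\varepsilon$, and such that for $s>0$ the map $\lambda_s$ is a diffeomorphism that is flat to $\infty$--order at $0$ in the sense that $\lambda_s$ reparametrizes a neighbourhood of $0$ so that $f_0 \circ \lambda_s$ has all derivatives vanishing at $0$; concretely one can take $\lambda_s$ to agree with something like $t \mapsto t\cdot \exp(-s/t^2)$--type behaviour near the origin, suitably truncated and glued to the identity, so that the composition $f_0 \circ \lambda_s$ remains convex (composition of an increasing convex function with an increasing convex reparametrization, as in Lemma \ref{lem:glueing1}) and its value, first derivative all vanish to infinite order at $u=0$. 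Setting $\gamma_s(t) = \pi^{-1}(\lambda_s(t), f_0(\lambda_s(t)))$ on $[-\varepsilon,\varepsilon]$ and $\gamma_s = \gamma_0$ outside, one checks: the $C^1$--bound follows since $\lambda_s \to \operatorname{id}$ in $C^1$ as $s \to 0$ and the perturbation is supported in $[-\varepsilon,\varepsilon]$; the matching condition on $[-1,-\varepsilon]\cup[\varepsilon,1]$ is built in; $\mathfrak{F}(\gamma_s)(0)=\Id$ holds because the reparametrization fixes $0$ and, being $\infty$--flat there, does not change the value, the unit tangent direction, or the normal at $t=0$; convexity away from $0$ is preserved by Lemma \ref{lem:affine} together with the convex--composition remark; and the $\infty$--order tangency with $\{z=0\}$ at $\gamma_s(0)$ is exactly the statement that all derivatives of $f_0 \circ \lambda_s$ vanish at $0$.

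The one subtlety to handle with care, and the step I expect to be the main obstacle, is arranging the reparametrizations $\lambda_s$ to depend smoothly on $s$ jointly, to genuinely preserve convexity (not merely the sign of $f''$ but the full no--inflection condition, which in the chart means the plane curve has nonvanishing curvature), and to interpolate smoothly from $\lambda_0 = \operatorname{id}$ — where $f_0 \circ \lambda_0 = f_0$ has $f_0''(0) > 0$, i.e. $0$ is \emph{not} yet a tangency — to $\lambda_1$ flat at $0$. The point is that $\lambda_s'(0)$ must go to $0$ as $s \to 1$ while $\lambda_s' > 0$ on the rest of the interval and $\lambda_s$ stays a diffeomorphism of $[-\varepsilon,\varepsilon]$ for $s < 1$; a standard construction is to fix a model $\infty$--flat diffeomorphism $\mu$ of $[-\varepsilon,\varepsilon]$ fixing $\pm\varepsilon$ and $0$, and set $\lambda_s$ to be the time--$\phi(s)$ of an isotopy from $\operatorname{id}$ to $\mu$, with $\phi$ a smooth reparametrization of $[0,1]$, $\infty$--flat at both ends; convexity is then preserved throughout because the isotopy can be taken through orientation--preserving diffeomorphisms and $f_0$ itself is strictly convex on $[-\varepsilon,\varepsilon]$ away from, and convex at, the origin. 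Once this model is fixed the remaining verifications are the routine estimates sketched above, and then Lemma \ref{lem:affine} transports everything back to $\NS^2$.
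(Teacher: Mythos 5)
Your construction does not prove the lemma, because it only \emph{reparametrizes} the curve instead of deforming its image. Setting $\gamma_s(t)=\pi^{-1}\bigl(\lambda_s(t),f_0(\lambda_s(t))\bigr)$ means $\gamma_s=\gamma_0\circ\lambda_s$ on $[-\varepsilon,\varepsilon]$: the image of the curve is unchanged. The original image is strictly convex, so it meets the equator $\{z=0\}$ at $(1,0,0)$ with tangency of order exactly one; no choice of $\lambda_s$ can raise the order of contact of the \emph{image}. All your construction achieves at $s=1$ is that $\lambda_1'(0)=0$ to infinite order, i.e.\ the parametrization comes to an $\infty$--flat stop at $t=0$, so $z\circ\gamma_1$ is flat at $0$ only because the whole velocity is. This degenerate tangency is useless for what the lemma is for: the flattened point must afterwards be stretched along the equator (Lemma \ref{lem:stretch}) and the resulting curves must stay regular and tame (Condition (1) of Definition \ref{def:tameComb}, and $\overset{.}X\neq 0$ in Proposition \ref{prop:EngelCurves}). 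With $\gamma_1'(0)=0$ the curve is not immersed at the flattening point, the Frenet frame there is no longer given by the usual formula, one cannot smoothly concatenate an equatorial arc of non--vanishing speed, and in addition the uniform bound $\|\gamma_s-\gamma_0\|_{C^1}\leq\varepsilon$ fails for $s$ near $1$, since killing the velocity at $t=0$ is a $C^1$--perturbation of size $|\gamma_0'(0)|$, not $\varepsilon$.

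The paper's proof avoids exactly this trap: in the affine chart it keeps the curve \emph{graphical}, $t\mapsto (t,f\circ g_s(t))$, and precomposes only the height function $f$ with a family of maps $g_s(t)=\int_0^t[(1-s)+sh]$, where $h$ is an even function, $\infty$--flat at $0$ and equal to $1$ away from $0$. Then the first coordinate still has derivative $1$, so the curve stays immersed and $C^1$--close to $\gamma_0$ (the derivative only changes through $f'$, which is small near the tangency point), while the height $f\circ g_1$ vanishes to infinite order at $t=0$, i.e.\ the \emph{image} genuinely osculates the equator to infinite order. Convexity away from $t=0$ is then checked via the differential inequality $(f''\circ g)(g')^2+(f'\circ g)\,g''>0$, using bounds $0<c_0\leq f''$, $|f'|\leq c_1$ and the constraints imposed on $h'$. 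If you want to salvage your approach, replace the reparametrization $\lambda_s$ of the curve by such a precomposition of the height function only, and verify the convexity inequality rather than invoking ``composition of convex functions'' (which in any case does not apply verbatim on the side where $f$ is decreasing).
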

\begin{proof}
Consider the affine chart in Lemma \ref{lem:affine} and describe the image curve $\pi \circ \gamma$ near the midpoint $t=0$ as the graph of a convex function $f:[-\delta, \delta]\longrightarrow\R^{+}$, for a sufficiently small fixed $\delta>0$. There exist constants $c_0,c_1\in\R^+$ such that
$$0<c_0\leq f''(t),\quad 0 \leq \|f'(t)\|\leq c_1\quad\forall t\in[-\delta,\delta].$$

Given a smooth function $g:[-\delta,\delta]\longrightarrow[-\delta,\delta]$, a condition for $f \circ g$ to be convex is the differential inequality
$$F'' = (f''\circ g)(g')^2 + (f'\circ g)g'' > 0.$$
The bounds given by $c_0,c_1$ above imply that it is sufficient that $g$ satisfies the inequality
$$c_0(g')^2 - |c_1g''| > 0.$$
Let us construct a family $g_s$ of such functions. Consider a function $h:[-\delta,\delta]\longrightarrow[0,1]$ such that
\begin{wideitemize}
\item[a.] $h(-t)=h(t)$.
\item[b.] $h^{(k)}(0)=0$ for $k\in\mathbb{N}$, $h|_{[3\delta/4,1]}= 1$, $h'|_{(0,\delta/4)}>0$ and $h'|_{[\delta/4,\delta/2]}=0$.
\item[c.] $\int_0^\delta h(t)dt = \delta$ and $c_0>|c_1\cdot h'_{|[\delta/2,3\delta/4)}|\geq0$. (by a. we also obtain $c_0>|c_1\cdot h'_{|(-3\delta/4,-\delta/2]}|\geq0$)
\end{wideitemize}
See Figure \ref{fig:graph} for a pictorial description.

We construct the linear interpolation $g_s(t)= \int_0^t [(1-s) +  sh(t)] dt$, with $s\in[0,1]$, and then the family of curves $\pi^{-1} \circ (t, f \circ g_s(t))$ can be glued with the initial curve $\gamma$ in the region $t \in [-\delta, -3\delta/4] \cup [3\delta/4,\delta]$. \end{proof}

\begin{figure}
\centering
\includegraphics[scale=0.4]{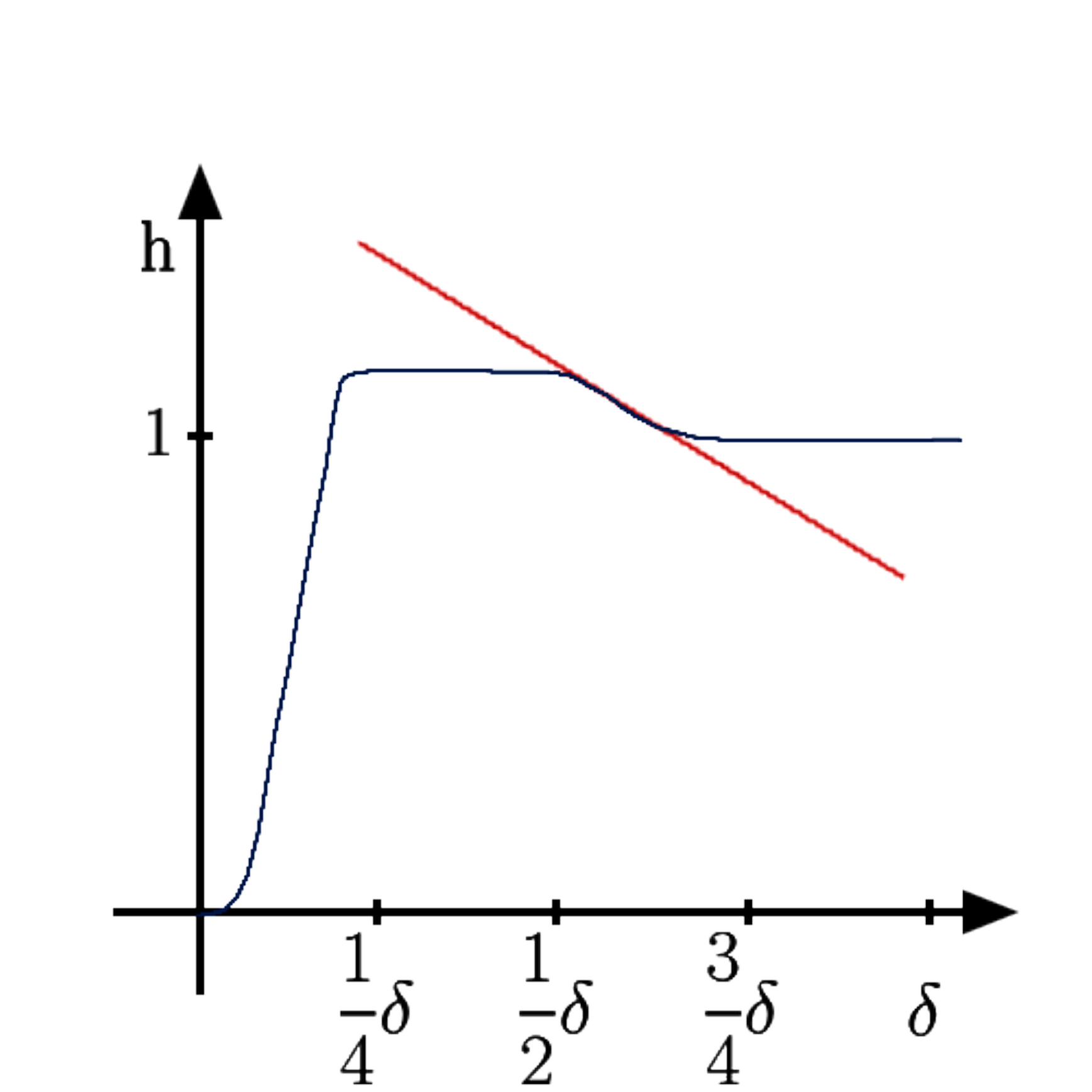}
\caption{Graph of $h$.}
\label{fig:graph}
\end{figure}

\subsubsection{The stretching lemma}
The following lemma concerns the stretching of a flattened point into a segment, the details of the proof are left to the reader.
\begin{lemma} \label{lem:stretch}
Consider a smooth curve $\gamma:[0,1]\longrightarrow\NS^2$ be a curve such that the point $\gamma(1)$ has $\infty$--order of contact with the equator $\{z=0\}$, and the Frenet frame at the endpoint is
$$\FF(\gamma)(1) = \left(\begin{array}{ccc}1&0&0\\0&-1&0\\0&0&1\end{array}\right).$$
Given a smooth function $f:[0,1]\longrightarrow\R$ with $f(0) = 0$ and $f'(0)<0$, the family of curves
\[ \left\{ \begin{array}{l l}
\gamma(t/(1-s/2)), & t \in [0,1-s/2] \\
\Rot(f(s(2t-2+s)))\gamma(1), & t \in [1-s/2,1] 
\end{array} \right. \]
can be reparametrized by a smooth family of smooth curves $\gamma_s: [0,1]\longrightarrow\NS^2$, $s \in [0,1]$.\hfill$\Box$
\end{lemma}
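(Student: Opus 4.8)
The plan is to establish Lemma \ref{lem:stretch} by working in the affine chart of Lemma \ref{lem:affine} and reducing the claim to a one--dimensional reparametrization statement. First I would observe that the family displayed in the statement is continuous but has a corner at the gluing point $t = 1-s/2$; the content of the lemma is that, after a small reparametrization, this corner is smoothed and one obtains a genuinely smooth family of smooth curves $\gamma_s$ in $s \in [0,1]$. The starting point is the hypothesis that $\gamma(1)$ has $\infty$--order of contact with the equator $\{z=0\}$: this means that in the affine chart $\pi$ around $\pi(\gamma(1)) = (1,0)$ (after the rotation normalising $\FF(\gamma)(1)$, whose second entry $-1$ records that the curve is traversing the equator in the negative direction at the endpoint), the curve is, to $\infty$--order, parametrized along the horizontal line $\pi(\{z=0\})$. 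So near $t=1$ one may write $\pi\circ\gamma(t) = (u(t), 0) + (\text{flat remainder})$ with $u'(1)<0$, and similarly the piece $\Rot(f(s(2t-2+s)))\gamma(1)$ stays on the equator and hence, in the chart, moves horizontally by a flat amount near the junction because $f(0)=0$ and $f$ is smooth.

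The key steps, in order, would be: (1) fix a small $\delta>0$ so that $\gamma|_{[1-\delta,1]}$ lies in the affine chart $H^2$ and is graphical over the horizontal axis; (2) in that window, express both pieces of the family as graphs $(\sigma, \varphi_s(\sigma))$ over a common horizontal parameter $\sigma$, using the $\infty$--order tangency to guarantee the functions $\varphi_s$ and all their derivatives match the equatorial value at the junction $\sigma$ corresponding to $t = 1-s/2$; (3) choose a smooth, $s$--dependent orientation--preserving reparametrization $\psi_s : [0,1] \to [0,1]$, equal to the identity away from a neighbourhood of $t = 1-s/2$, which interpolates smoothly between the two affine parametrizations — explicitly a partition--of--unity interpolation between $t \mapsto t/(1-s/2)$ and $t \mapsto s(2t-2+s)$, glued through the flat junction; (4) set $\gamma_s = (\pi^{-1}$ applied to the graph of $\varphi_s \circ \psi_s)$ on the window and $\gamma_s = \gamma(\psi_s(t))$ outside, and check smoothness in $(s,t)$ jointly at the junction, where all obstructions vanish to $\infty$--order; (5) verify the two displayed identities in the statement hold after this reparametrization, which is immediate from the construction since $\psi_s$ was built to equal the prescribed parameter changes on $[0,1-s/2-\varepsilon]$ and $[1-s/2+\varepsilon,1]$ respectively.

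The main obstacle I expect is step (4): checking that the spliced family is smooth \emph{jointly} in the parameter $s$ and the curve variable $t$ across the seam $t = 1-s/2$, which is itself a moving point. The reason this works is precisely the $\infty$--order hypothesis: both the tail of $\gamma$ and the rotated--endpoint piece differ from the flat equatorial curve by functions all of whose derivatives vanish at the seam, so any smooth interpolation between the two parametrizations produces a curve whose $t$--derivatives of every order agree from both sides, uniformly in $s$; one then invokes a standard lemma (of Whitney/Seeley type, or simply Borel's lemma combined with Hadamard's) to conclude the glued function is $C^\infty$ in $(s,t)$. The convexity assertions familiar from Lemmas \ref{lem:glueing1} and \ref{lem:glueing2} are not needed here, since the stretched region lies on the equator and carries no convexity claim, which is why the analytic details are genuinely routine and, as the authors note, safely left to the reader.
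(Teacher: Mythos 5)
The paper gives no proof of this lemma (the details are explicitly left to the reader), and your argument supplies exactly the intended one: since the tail of $\gamma$ is $\infty$--order tangent to the equator at $\gamma(1)$ while the second piece lies exactly on the equator, the image near the moving seam $t=1-s/2$ is a single smooth curve (a flat graph glued to the zero graph in the affine chart), so the only defect of the displayed family is a corner in its parametrization, which you remove by a jointly smooth family of reparametrizations agreeing with the prescribed ones away from the seam. This is correct; the only slip is cosmetic -- $\pi(\gamma(1))=(0,0)$, not $(1,0)$, in the chart of Lemma \ref{lem:affine} -- and it affects nothing.
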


\section{Reducing to the angular model} \label{sec:red}

The proof of Theorem \ref{thm:main} consists of a reduction process and an extension problem. Section \ref{sec:universalHole} defined a particular germ of Engel structure on the boundary of the 4--disk, which Corollary \ref{cor:dominationSymmetric} and Theorem \ref{thm:fill} then extended to an Engel structure on the interior. Hence, in order to conclude Theorem \ref{thm:main}, it is sufficient to homotope a formal Engel structure to a genuine Engel structure except at finitely many 4--disks having such an Engel germ on their boundaries.

The main result of this section is this reduction process, which we state in the following:

\begin{theorem}\label{thm:red}
Let $(\SW_0,\SD_0,\SE_0)$ be a formal Engel structure on a closed 4--fold $M$ and $K\in\R^+$ a constant. Then there exists a homotopy of formal Engel structures $(\SW_t,\SD_t,\SE_t)$, $t\in[0,1]$, and a collection of 4--disks $B_1,\ldots,B_p\subseteq M$ such that:
\begin{wideitemize}
\item[a.] $(\SW_1,\SD_1,\SE_1)$ is a genuine Engel structure in the complement $M\setminus\bigcup_{i=1}^p B_i$.
\item[b.] For each $i\in\{1,\ldots,p\}$, the restriction of the formal Engel structure $(\SW_1,\SD_1,\SE_1)$ to each 4--disk $B_i$ is a $K$--radial shell.
\end{wideitemize}
\end{theorem}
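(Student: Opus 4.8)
\textbf{Proof proposal for Theorem \ref{thm:red}.}

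The plan is to reduce the formal Engel structure to a genuine one over the $3$--skeleton of a suitably chosen triangulation, and then to normalize the remaining obstructed $4$--cells into $K$--radial shells using Corollary \ref{cor:dominationSymmetric}. First I would apply Theorem \ref{thm:hprincipleEvenContact}: the pair $(\SE_0, \text{any defining data})$ is in particular an almost even--contact structure, so we may homotope the flag $(\SW_0,\SD_0,\SE_0)$ through formal Engel structures to one in which $\SE$ is a genuine even--contact structure and $\SW$ is its kernel line field $\SW = \ker(d\alpha|_{\ker\alpha})$. (The $h$--principle is applied with the flag datum $\SD \subseteq \SE$ carried along as a formal section; since the space of $2$--distributions inside a fixed $3$--distribution containing a fixed line field is a fibration with connected fibres, this does not obstruct anything.) From this point on $\SE$ and $\SW$ are fixed and only $\SD$, equivalently the Legendrian line field $X$ inside the contact distribution $\xi = \SE \cap TN$ transverse to $\SW$, is to be deformed.

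Next I would triangulate $M$ in a way adapted to the flow of $\SW$: choose a triangulation fine enough that each simplex lies in a flow box for $\SW$, so that on a neighbourhood of each simplex the pair $(\SE,\SW)$ is isomorphic to the stabilization by $\R$ (the $\partial_t$--direction) of a contact Darboux chart, exactly the local model $\SE = \xi \oplus \langle\partial_t\rangle$ appearing in Definition \ref{def:engelShell}. The reduction over the skeleton then proceeds by induction on dimension. Over the $0$-- and $1$--skeleton there is no obstruction: a genuine Engel structure in these flow boxes is the same as a choice of Legendrian curve $X_p$ in $\NS^2$ with nonzero velocity and controlled inflections (Proposition \ref{prop:EngelCurves}), e.g. a contact or Lorentzian prolongation, and these can be chosen compatibly. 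Over a $k$--simplex with $k \le 3$, the formal datum restricted to the boundary is already genuine, and the flexibility of Proposition \ref{prop:EngelCurves} together with the parametric version of Little's theorem (Theorem \ref{thm:Little}) let us extend, after possibly enlarging the rotation of $X$ along each vertical segment. This is where I expect the main subtlety: one must extend across each $3$--cell while (i) keeping $\SD$ genuinely Engel, (ii) matching the genuine structure already built on its boundary, and (iii) arranging that the total angular rotation $c(p,1)-c(p,0)$ of $X$ along every vertical segment over the dual $4$--cells exceeds the prescribed constant $K$ (in fact $\geq 6\pi$). Achieving (iii) costs nothing homotopically because we may spin $X$ around the equator freely while staying Engel in the flow box, using the kink curves $\beta_\theta$ of Subsection \ref{ssec:clover}; the bookkeeping to do this consistently across adjacent simplices is the technical heart of the argument.

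Finally, having produced a genuine Engel structure outside a neighbourhood of finitely many $4$--cells $B_1,\dots,B_p$ (one for each $4$--simplex, or one per top cell of a handle decomposition dual to the triangulation), I would identify each $B_i$ with $\D^3\times[0,1]$ compatibly with the flow--box coordinates, so that the restricted formal Engel structure is an Engel shell in the sense of Definition \ref{def:engelShell}, in fact an angular shell once we have used the even--contact normalization to force $\SW=\langle\partial_t\rangle$ and $X\in\xi$ on all of $B_i$. Its angle function $c$ satisfies $\min_{p\in\partial\D^3}(c(p,1)-c(p,0)) > K$ by step (iii). Corollary \ref{cor:dominationSymmetric} then supplies a $K$--radial function $c'$ with $\SD(c)$ dominating $\SD(c')$, and Proposition \ref{prop:domination} lets us replace $\SD(c)$ by the $K$--radial shell $\SD(c')$ up to homotopy through Engel shells relative to the genuine structure outside $B_i$; doing this in each $B_i$ independently yields the homotopy $(\SW_t,\SD_t,\SE_t)$ and the collection of $4$--disks as claimed. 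The whole construction is manifestly natural in parameters, which is what Section \ref{sec:proof} will exploit.
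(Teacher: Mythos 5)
Your outline matches the paper's at the level of strategy (even--contact $h$--principle first, a triangulation adapted to $\SW$, induction over skeleta, and finally Corollary \ref{cor:dominationSymmetric} to normalize the $4$--cells into $K$--radial shells), but the inductive step --- which you yourself flag as ``the technical heart'' and then defer --- is precisely where your argument has a gap, and the tool you reach for there is the wrong one. Extending a \emph{genuine} Engel structure across $2$-- and $3$--cells relative to its boundary is not delivered by Proposition \ref{prop:EngelCurves} plus a parametric Little theorem; extension of genuine Engel germs across cells is exactly the hard filling problem that the paper only solves for $4$--cells with a $6\pi$--radial boundary germ, via the clover. If your induction really needed Little--type arguments at every skeleton level, you would be re-encountering the main difficulty of the paper before ever reaching the $4$--cells. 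Likewise, the kink curves $\beta_\theta$ you invoke to ``spin $X$'' belong to the filling argument of Section \ref{sec:universalHole}, not to the reduction.

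What makes the induction unobstructed in the paper is a pair of ingredients your proposal omits. First, the adapted triangulation of Proposition \ref{prop:triang} is not just ``each simplex in a flowbox'': its dynamical property (2) guarantees that inside the flowbox $S(\sigma)\cong\D^3\times[0,1]$ of a $j$--simplex, the region already treated (the union $\cup_{\tau\subsetneq\sigma}S(\tau)$) is saturated by vertical segments, i.e.\ of the form $A\times[-\varepsilon,1+\varepsilon]$. Second, once $(\SE,\SW)$ are fixed by the even--contact normalization, $\SD$ is encoded by an angle function $c$ and the Engel condition is the open convex condition $\partial_t c>0$ (Lemma \ref{lem:positive}, quantified by the Engel energy and Lemma \ref{lem:EngelEnergyBounds}). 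The inductive step then consists of prescribing a function $h$ on the flowbox with $h>K_0\cdot C_{\phi(\sigma)}$ everywhere and $h=\partial_t c$ on the saturated set $A\times[-\varepsilon,1+\varepsilon]$, integrating it to a new angle function, and interpolating with a cut--off; this never destroys the Engel condition where it already holds, so no bookkeeping across adjacent simplices beyond the saturation property is needed, and no convex--curve theory enters. This same mechanism, together with property (b) of the triangulation ($\partial\sigma\subseteq\cup_{\tau\subsetneq\sigma}S(\tau)$) and the freedom to take $K_0$ large relative to $\min C_{\phi(\sigma)}$, is what yields your step (iii): the total rotation $c(p,1)-c(p,0)>K$ over $\partial\D^3$ in each $4$--cell, after which your final appeal to Corollary \ref{cor:dominationSymmetric} and Proposition \ref{prop:domination} agrees with the paper. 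As written, however, the proposal replaces the actual engine of the proof with an appeal to Little's theorem that would not go through.
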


The argument for Theorem \ref{thm:red} uses an adequate triangulation $\Sigma$ of the 4--manifold $M$, and then deforms the formal Engel structure along the 3--skeleton of $\Sigma$ to conform to the two properties in the statement. These two steps are detailed in Subsections \ref{ssec:triang} and \ref{ssec:3sk}, respectively.

\subsection{An adequate triangulation}\label{ssec:triang} 

Consider a 4--manifold $M$ with a formal Engel structure $(\SW,\SD,\SE)$. We construct a triangulation of $M$ adapted to the flowlines of the line field $\SW$. To it, we associate a collection of flowboxes -- closed 4--disks $D^3\times[0,1]\subseteq M$ with coordinates $(x,y,z;t)$ where the line field $\SW$ has the linear description $\partial_t$ -- satisfying a certain nesting property. The specific dimension of the manifold is not important for this argument and hence we will keep it general for later use in the parametric case.

\begin{proposition}\label{prop:triang}
Let $M$ be an $n$--dimensional manifold, $n \geq 2$, endowed with a line field $\SW$. Then there exists a triangulation $\Sigma=\{\sigma\}$ of $M$ and a finite collection $\{S(\sigma)\}_{\sigma \in \Sigma}$ of closed $n$--disks such that
\begin{wideitemize}
\item[a.] Each simplex $\sigma$ is contained in the union $\cup_{\tau\subseteq\sigma}S(\tau)$
\item[b.] The boundary of a simplex $\sigma$ satisfies $\partial\sigma\subset \cup_{\tau\subsetneq\sigma}S(\tau)$.
\item[c.] For each pair of simplices $\sigma,\sigma'$, neither of them containing the other, we have $S(\sigma)\cap S(\sigma')=\emptyset$.

\item[1.] For each simplex $\sigma\in\Sigma$, $\exists\phi(\sigma):S(\sigma)\longrightarrow D^{n-1}\times[0,1]$ such that $\phi(\sigma)_*\SW= \langle \partial_t \rangle$.
\item[2.] For each simplex $\sigma\in\Sigma^{(j)}$, $j<n$, any orbit of the line field $\SW$ in the disk $S(\sigma)$ either avoids the set $\cup_{\tau\subsetneq\sigma}S(\tau)$, or it is entirely contained on it.
\end{wideitemize}
\end{proposition}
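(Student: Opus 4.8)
The plan is to build the triangulation and the disks $S(\sigma)$ simultaneously by induction on the dimension of the skeleton, using the fact that $\SW$ is locally a trivial line field (flowbox theorem) to arrange conditions (1) and (2). First I would fix an auxiliary Riemannian metric on $M$ and cover $M$ by finitely many flowboxes, i.e.~charts $U_\alpha \cong D^{n-1}\times[0,1]$ in which $\SW = \langle\partial_t\rangle$; this is possible since $\SW$ is a $1$-dimensional distribution and the flowbox (straightening) theorem applies. Taking a triangulation $\Sigma$ subordinate to this cover, and sufficiently fine (via barycentric subdivision), guarantees that each simplex $\sigma$ together with a small neighbourhood is contained in one flowbox, so that $\SW$ is linear there. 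The disk $S(\sigma)$ will be chosen inside that flowbox as a thin closed neighbourhood of $\sigma$; the subordinate chart restricted to $S(\sigma)$ provides the diffeomorphism $\phi(\sigma)$ in condition (1).

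Next I would carry out the construction of the $S(\sigma)$ carefully so that the nesting conditions (a), (b), (c) and the orbit condition (2) hold. I would proceed skeleton by skeleton: having chosen disks $S(\tau)$ for all $\tau$ of dimension $<j$, for each $j$-simplex $\sigma$ choose $S(\sigma)$ to be a neighbourhood of $\sigma$ so thin that it meets only the $S(\tau)$ with $\tau \subseteq \sigma$ and, for faces $\tau \subsetneq \sigma$, only near $\tau$ itself; this is possible because the simplices not containing $\sigma$ (and their already-chosen disks) stay a definite distance away once the $S(\tau)$ are thin enough, and one can further shrink. Condition (b), that $\partial\sigma \subseteq \cup_{\tau\subsetneq\sigma}S(\tau)$, follows because $\partial\sigma$ is the union of the proper faces $\tau$ and each $\tau$ lies in its own $S(\tau)$; condition (a) then follows inductively. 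The key point for condition (2) is to choose $S(\sigma)$ of the product form $D^{n-1}_\sigma \times [0,1]$ with respect to the flowbox coordinates, i.e.~a union of full $\SW$-orbit segments spanning the flowbox; then an orbit arc of $\SW$ inside $S(\sigma)$ either never enters the union $\cup_{\tau\subsetneq\sigma}S(\tau)$, or, since that union is itself built from flowbox-product pieces transverse in the $D^{n-1}$-directions, the orbit lies in it for its whole length in $S(\sigma)$. Concretely, I would first pick the flowbox for $\sigma$, then choose the $D^{n-1}_\tau$ for the faces $\tau$ to be small product sub-disks and let $S(\tau)$ extend as full vertical segments, arranging by transversality that the vertical projections of the various $S(\tau)$ are either disjoint or nested in the base $D^{n-1}$.

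The main obstacle I anticipate is reconciling the combinatorial nesting requirements (a)--(c) with the dynamical requirement (2) in a single consistent choice: one wants each $S(\sigma)$ to be a ``vertical'' product neighbourhood in a flowbox, but different simplices of the same dimension generically live in different flowboxes, so one must arrange, on overlaps, that a $S(\sigma)$ chosen vertical in one flowbox is still ``vertical enough'' where it meets the $S(\tau)$'s of its faces, which may have been set up in other flowboxes. I would handle this by working relative to a single flowbox at a time: first triangulate so finely that the closed star of every simplex $\sigma$ lies in one flowbox $U_\sigma$; then do the skeleton-by-skeleton construction of the $S(\sigma)$ entirely inside the $U_\sigma$'s, always choosing $S(\sigma)$ as a product $D^{n-1}_\sigma\times[0,1]$ in $U_\sigma$'s coordinates and using genericity (small perturbations of the base disks $D^{n-1}_\sigma$, which do not affect the $\partial_t$-description of $\SW$) to make the base projections of $S(\sigma)$ and of the previously constructed $S(\tau)$ mutually in ``general position'' — either disjoint or one contained in the other. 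A final routine check confirms that with these choices (1) holds by construction and (2) holds because each $S(\sigma)$ and each $\cup_{\tau\subsetneq\sigma}S(\tau)$ is $\SW$-saturated inside $S(\sigma)$.
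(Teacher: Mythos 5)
There is a genuine gap, and it sits exactly at the point you flagged as the ``main obstacle''. Your mechanism for condition (2) is to take each $S(\sigma)$ to be $\SW$--saturated in its flowbox, i.e.\ a full vertical product $D^{n-1}_\sigma\times[0,1]$ spanning the box, so that $S(\sigma)$ and $\cup_{\tau\subsetneq\sigma}S(\tau)$ are unions of whole orbit segments. But this is incompatible with condition (c): in any triangulation the simplices stack along the flow direction, so inside a single flowbox there are many pairs of incomparable simplices $\sigma,\sigma'$ (e.g.\ two disjoint edges lying one above the other along the orbits) whose projections to the base $D^{n-1}$ overlap on open sets. Their full--height saturated neighbourhoods then necessarily intersect, and no ``general position'' perturbation of the base disks can help, since each base disk must contain the (overlapping) projection of its own simplex. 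Conversely, if you retreat to neighbourhoods that are thin in the flow direction (as in the first half of your second paragraph), disjointness (c) is easy but your argument for (2) evaporates: an orbit arc can then enter a face disk $S(\tau)$ through its top or bottom and leave $S(\sigma)$ before traversing it, and nothing in your construction rules this out. So neither branch of your proposal yields all of (a)--(c) together with (2).

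The missing ingredient is transversality of the simplices to $\SW$, obtained via Thurston's Jiggling Lemma: after jiggling, every simplex meets the line field at an angle bounded away from zero. The paper then builds $S(\sigma)$ not as a saturated slab but as $\wt\sigma\times D^{n-1-j}(r_1)\times[-r_2,r_2]$, a slightly shrunk copy of the simplex thickened by a small radius $r_1$ in the directions normal to $T\sigma\oplus\SW$ and flowed for a short time $r_2$ along $\SW$, and property (2) is extracted from an aspect--ratio argument: choosing the ratios $r_1/r_2$ (and $r_2$ itself) suitably, and using the uniform transversality, one arranges that $\sigma$ and $S(\sigma)$ meet the boundary of each face disk $S(\tau)$ only in its lateral (flow--parallel) region, so orbits inside $S(\sigma)$ cannot partially enter $\cup_{\tau\subsetneq\sigma}S(\tau)$. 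Without the jiggling step there is no lower bound on the angle between simplices and $\SW$, and this quantitative choice of radii is impossible. A separate, smaller omission: for the top--dimensional simplices a thin neighbourhood of $\sigma$ is not a flowbox as required by (1); the paper instead takes the forward $\SW$--flow of the union of faces where $\SW$ points inward and smooths it, which your sketch does not address.
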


Note that the first three properties are of a topological nature, whereas the remaining two requirements belong to a dynamical setting. See Figures \ref{fig:tri2d} and \ref{fig:tri3d} for two and three dimensional examples of the required triangulations.


\begin{figure}
\begin{minipage}[hl]{0.47\textwidth}
\centering
\includegraphics[scale=0.40]{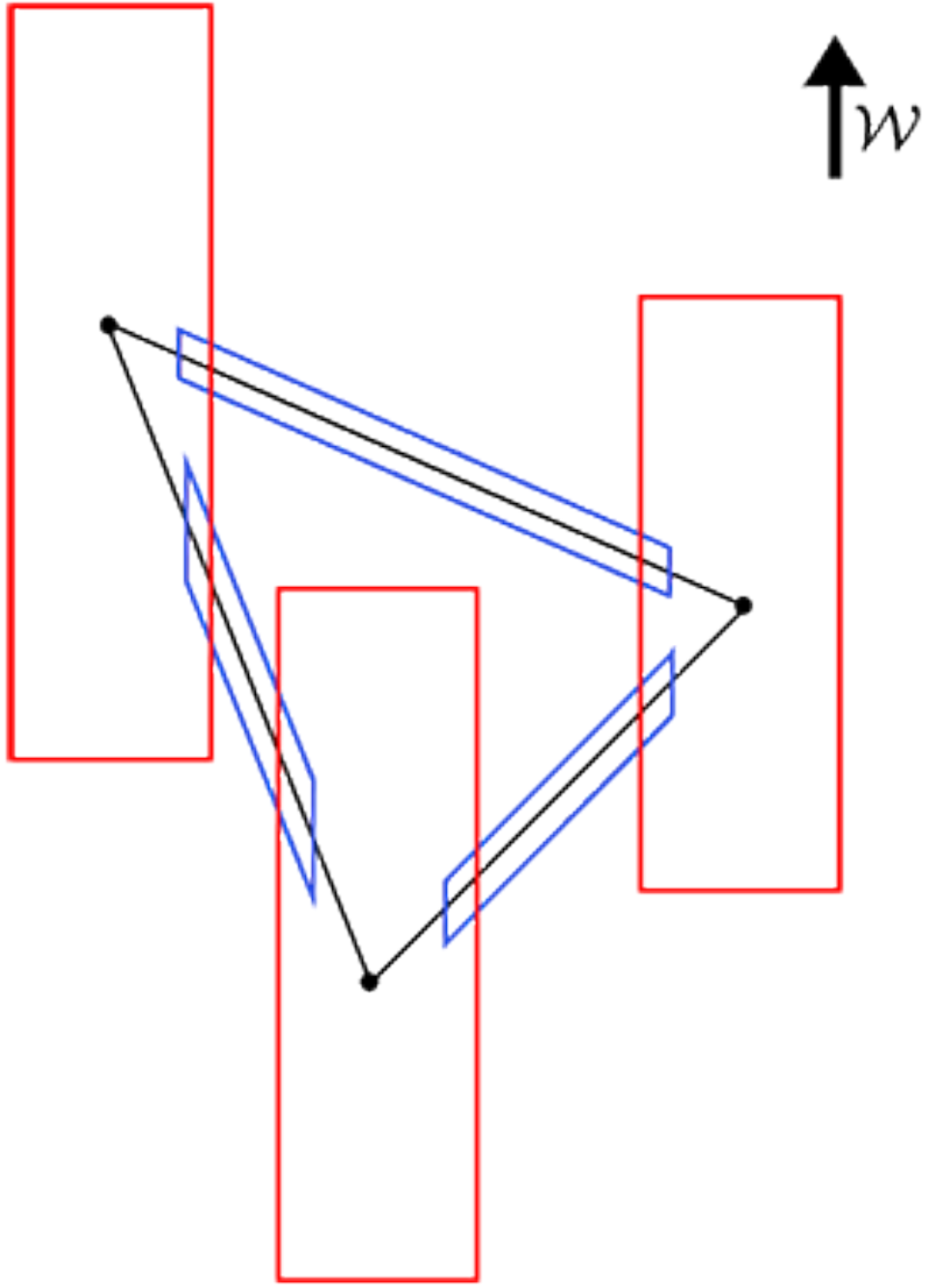}
\caption{Case $n=2$. Red: closed disks for the $0$-simplices. Blue: closed disks for the $1$-simplices.}
\label{fig:tri2d}
\end{minipage}
\begin{minipage}[hr]{0.5\textwidth}
\centering
\includegraphics[scale=0.35]{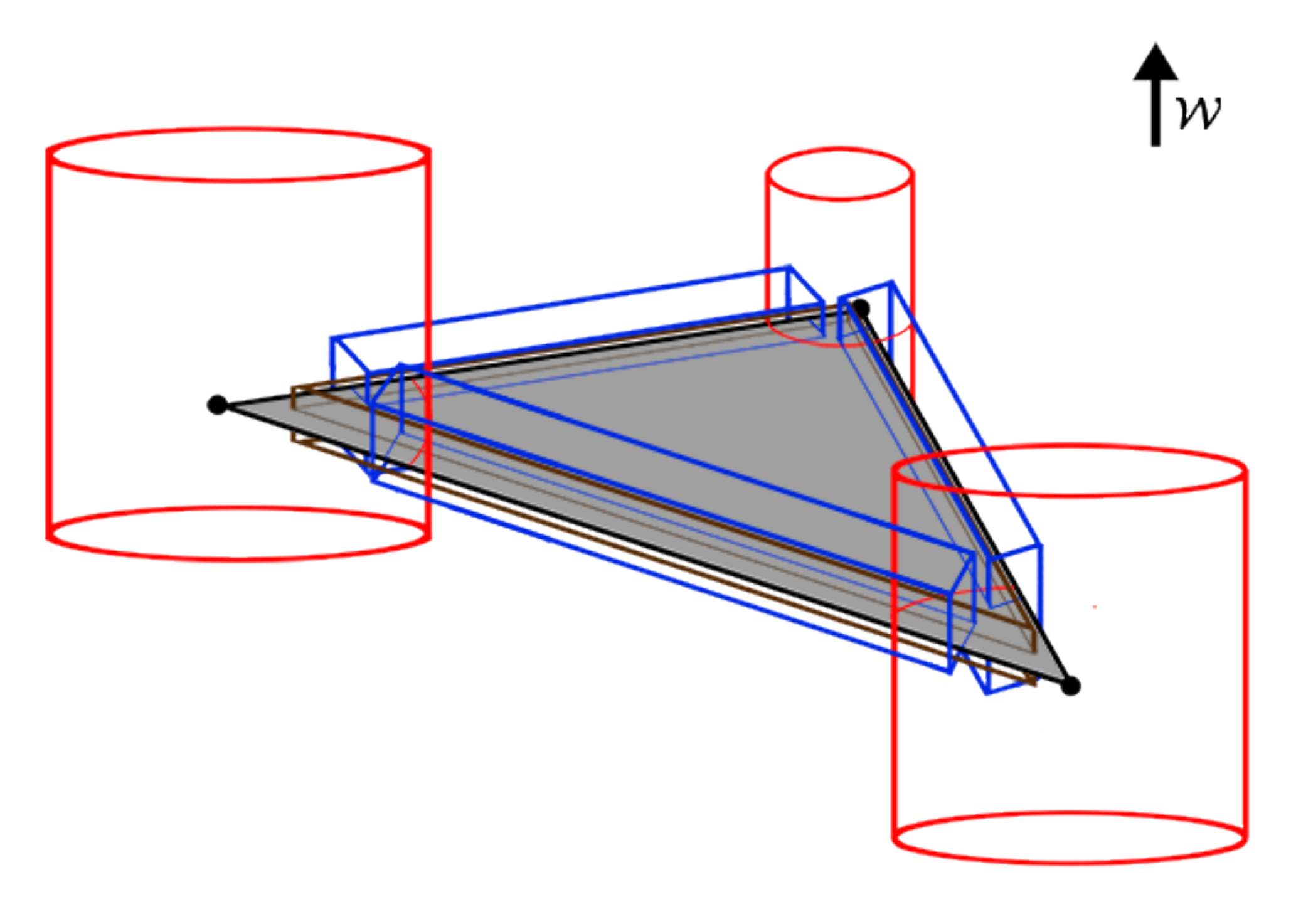}
\caption{ Case $n=3$. Red: closed disks for the $0$-simplices. Blue: closed disks for the $1$-simplices. Brown: closed disks for the $2$-simplices.}
 \label{fig:tri3d}
\end{minipage}
\end{figure}

%

\begin{proof}
Fix a Riemannian metric $g$ on the $n$--manifold $M$ and consider a cover of $M$ by open disks such that each disk is a flowbox for the line field $\SW$; we then trivialize $\SW$ in each flowbox by a unitary vector field which we still denote $\SW$. Apply Thurston's Jiggling Lemma \cite[Section 5]{Th} (see also \cite{Thom}) to the 1--distribution $\SW$ in order to find a triangulation $\Sigma$ adapted to the cover such that the line field $\SW$ is transverse to each simplex, i.e.~the angle between the line field and a simplex is strictly positive. 

For each $j$--simplex $\sigma\in\Sigma^{(j)}$, $j<n$, we fix a triple of positive real numbers $(r_0,r_1,r_2)\in\R^+\times\R^+\times\R^+$ on which the set $S(\sigma) \subseteq M$ will depend. Consider a $j$--dimensional disk $\wt\sigma\subseteq\sigma$ such that the distance $r_0< d_g(\partial\wt\sigma,\partial\sigma)\leq 2r_0$. Use the time--$r_1$ exponential map on an orthonormal basis of $(T\sigma\oplus\SW)^\perp\subseteq TM$ and the time--$r_2$ flow of $\SW$ to construct the set:
$$S(\sigma)\cong\wt\sigma\times D^{n-1-j}(r_1)\times[-r_2,r_2]$$
The region of the boundary $\partial(\tilde\sigma\times D^{n-1-j}(r_1))\times[-r_2,r_2]$ will be called the lateral boundary of $S(\sigma)$.
Let us prove that suitable choices of $(r_0,r_1,r_2)$ create a collection $S(\sigma)$ satisfying all the properties required in the statement for $j<n$. The sets $S(\sigma)$ can be chosen to additionally satisfy the following property:
\begin{wideitemize}
\item[3a.] For any $j$--simplex $\sigma$, $j<n$, and any $\tau \subsetneq \sigma$, $\sigma$ intersects the boundary of the set $S(\tau)$ in its lateral region.
\item[3b.] $S(\sigma)$ also intersects the boundary of $S(\tau)$ in its lateral region.
\end{wideitemize}
Which can be readily seen to imply Property (2). We proceed by induction in the dimension of the simplices.

For $j=0$, the first radius $r_0$ is not defined; but observe that the five properties in the statement are satisfied by choosing $r_1,r_2 > 0$ small enough. Further, Property (3) can be satisfied by choosing $r_2 \to 0$ and $r_1/r_2 \to 0$. Indeed, if for each sequence of pairs $(r_1,r_2)$ satisfying $r_2 \to 0$ and $r_1/r_2 \to 0$, Property (3a) does not hold, then the angle between some simplex $\tau$ containing the point $\sigma$ and $\SW$ would be zero, and this is impossible.

Let us explain the inductive step: we suppose that the six properties hold for the $k$--simplices, $k= 0,\ldots,j-1$, and we consider a $j$--dimensional simplex $\sigma$. Choose the first two radii $(r_0,r_1)$ small enough such that
$$\partial(\tilde\sigma \times  D^{n-1-j}(r_1))\subseteq\cup_{\tau\subsetneq\sigma}S(\tau),$$
and shrink $(r_1, r_2)$ to guarantee Property (c). Property (3a) is achieved by choosing the quotient $r_2/r_1$ to be large enough and then Property (3b) is guaranteed if $r_2$ is chosen small enough.

It remains to consider the $n$--dimensional simplices $\sigma\in\Sigma^{(n)}$: for each such $\sigma$ we consider the PL--smooth disk $D_+$ constructed as the union of the faces of $\sigma$ where $\SW$ is inward pointing. This yields a flowbox for $\SW$ contained in $\sigma$ by considering the forward flow (for differing times) of a disk contained in $D_+$; this flowbox can be smoothed and assumed to have boundary $C^0$--close to $\partial\sigma$.
\end{proof}

\subsection{Engel energy}\label{ssec:3sk}
The starting point in this subsection is that of Theorem \ref{thm:red}, a formal Engel structure $(\SW,\SD,\SE)$ on a smooth 4--fold $M$. By applying Theorem \ref{thm:hprincipleEvenContact}, we can suppose that the 3--distribution $\SE^3$ is even contact with the line field $\SW^1$ being its kernel. The deformations considered henceforth maintain both $\SE$ and $\SW$.

Let us introduce a measure of the Engelness of a formal Engel structure, which we refer to as the Engel energy; the argument for Theorem \ref{thm:red} is phrased in terms of the creation of such Engel energy.

Consider an auxiliary Riemannian metric $g$ and the Riemannian orthogonal $\SW^{\perp_g}\subseteq\SE$ of the line field $\SW$ inside the 3--distribution $\SE$. Given a point $p$, fix unitary vectors $W\in\SW$, $X\in \SD \cap \SW^{\perp_g}$ and $Y\in\SW^{\perp_g}$ in a neighbourhood $\Op(p)$ such that $\{ W, X, Y\}$ is an oriented unitary local basis of the 3--distribution $\SE$.

\begin{definition}
The Engel Energy of the 2--distribution $\SD$ at the point $p\in M$ is
\[ \SH(\SD)(p)= \langle \SL_ W X, Y \rangle. \]
\end{definition}
The convention on orientations makes this quantity well--defined; this captures analytically the geometric intuition that in order for $(\SW,\SD,\SE)$ to define an Engel structure, the Legendrian vector field $X$ should rotate (towards $Y$) when we flow along the line field $\SW$. One can also verify the following
\begin{lemma}
Let $(M;\SW,\SD,\SE)$ be a formal Engel structure with $(\SE,\SW)$ even--contact. Then
$$\SH(\SD)(p)>0\Longleftrightarrow (\SW,\SD,\SE)\mbox{ is Engel at }p.$$
\end{lemma}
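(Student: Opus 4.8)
The plan is to unwind both sides of the claimed equivalence into the same condition on Lie brackets, using the local frame $\{W,X,Y\}$ introduced just before the statement. Recall that $\SW = \langle W\rangle$, $\SD = \langle W, X\rangle$ (since $X\in\SD\cap\SW^{\perp_g}$ is unitary and not proportional to $W$), and $\SE = \langle W,X,Y\rangle$. By definition $\SD$ is Engel at $p$ iff $[\SD,\SD]_p$ has rank $3$ and $[\SE,\SE]_p = T_pM$; but the second condition holds everywhere by the standing assumption that $(\SE,\SW)$ is even--contact. So the content to verify is: $\SH(\SD)(p)>0$ iff $[\SD,\SD]_p = \SE_p$ has rank $3$.

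First I would expand $[\SD,\SD]$. Since $\SD = \langle W, X\rangle$, the relevant bracket is $[W,X]$. Writing $[W,X] = aW + bX + cY + (\text{component transverse to }\SE)$, the even--contact assumption together with $\SW = \ker(d\alpha|_{\ker\alpha})$ needs to be used to kill the transverse component: because $W$ generates the characteristic line field of $\SE$, its flow preserves $\SE$, so $[W,\Gamma(\SE)]\subseteq\Gamma(\SE)$; in particular $[W,X]\in\SE$. Hence $[W,X] = aW + bX + cY$ and $[\SD,\SD]_p$ has rank $3$ iff the $Y$--coefficient $c$ is nonzero. Now $c = \langle [W,X], Y\rangle = \langle \SL_W X, Y\rangle = \SH(\SD)(p)$ (here $\{W,X,Y\}$ is orthonormal, so the $Y$--coefficient is exactly the inner product, and $[W,X]=\SL_W X$). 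Thus $\SH(\SD)(p)\neq 0$ iff $\SD$ is Engel at $p$.

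It remains to upgrade $\neq 0$ to $>0$, i.e.\ to check the sign. This is where the orientation conventions enter: the bundle $\SE$ carries the orientation for which $\{X, \dot X, \ddot X\}$ — or in the even--contact picture, $\{W,X,[W,X]\}$ when $X$ rotates positively — is positive, and the frame $\{W,X,Y\}$ was chosen to be an \emph{oriented} orthonormal basis of $\SE$. When $\SD$ is genuinely Engel, $\{W, X, [W,X]\}$ is by definition a positive frame of $\SE$; comparing with $\{W,X,Y\}$ shows that the $Y$--component $c$ of $[W,X]$ must be positive. Conversely $c>0$ forces $\{W,X,[W,X]\}$ to be positively oriented and of rank $3$. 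So $\SH(\SD)(p)>0\iff\SD$ Engel at $p$.

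The main obstacle is the sign bookkeeping: one must be careful that the orientation of $\SE$ used to define Engelness (via $\{X,Y,[X,Y]\}$ in the original definition, or equivalently via the rotation direction of the Legendrian field along $\SW$) matches the orientation used to fix $\{W,X,Y\}$ as a positive frame, and that the identification $\SL_W X = [W,X]$ is the correct one with no stray sign. Everything else — that $[W,X]\in\SE$, that rank $3$ is equivalent to $c\neq 0$, that the transverse even--contact condition is automatic — is routine once the even--contact structure of $(\SE,\SW)$ is invoked. I would organize the proof so that the sign is pinned down by reducing to the model of Example (contact prolongation), where $W=\partial_t$, $X=\cos t\,Y_0+\sin t\,Z_0$, and $\SL_W X = -\sin t\,Y_0 + \cos t\,Z_0$ rotates positively, making the sign manifest.
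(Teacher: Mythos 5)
Your argument is correct. The paper actually leaves this lemma as an unproved verification (``One can also verify the following''), and the route it implicitly has in mind is the angular one: pass to an adapted chart where $\SW=\langle\partial_t\rangle$, write $\SD$ through its angle function $c$, and combine Lemma \ref{lem:positive} ($\partial_t c>0$ iff Engel) with Lemma \ref{lem:EngelEnergyBounds} ($\SH(\SD)=\partial_t c$ up to a positive factor $C_\phi$). Your proof is the coordinate--free version of the same computation, and it is complete: the even--contact hypothesis gives $d\alpha(W,X)=0$ for $X\in\Gamma(\SE)$, hence $\alpha([W,X])=W\alpha(X)-X\alpha(W)-d\alpha(W,X)=0$, so $\SL_W X=[W,X]\in\SE$; writing $[W,X]=aW+bX+cY$ in the orthonormal oriented frame, $[\SD,\SD]_p=\SD_p+\langle[W,X]_p\rangle$ equals $\SE_p$ iff $c\neq 0$, the condition $[\SE,\SE]=TM$ being automatic; and $c=\SH(\SD)(p)$. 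What it buys over the paper's implicit route is that it makes transparent exactly where the even--contact hypothesis is used and avoids introducing a chart. One point worth making explicit (you gesture at it correctly in the sign discussion): ``$(\SW,\SD,\SE)$ is Engel at $p$'' must be read as including that the fixed orientation of $\SE$, the one rendering $\{W,X,Y\}$ positive, is the orientation induced by $\SD$ via $\{W,X,[W,X]\}$; with only the rank conditions one gets the equivalence with $\SH(\SD)(p)\neq 0$, and it is precisely this orientation convention (the same one as item (d) of Definition \ref{def:engelShell}) that upgrades it to $\SH(\SD)(p)>0$, exactly as your determinant comparison of the frames $\{W,X,Y\}$ and $\{W,X,[W,X]\}$ shows. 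The closing reduction to the contact prolongation model is a reasonable sanity check but not needed once that convention is stated.
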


For a closed domain $U\subseteq M^4$, a chart $\phi:U \longrightarrow \D^3\times[0,1]$ is said to be adapted if $\phi_*\SW= \langle \partial_t \rangle$; the charts associated to the triangulation provided by Proposition \ref{prop:triang} are adapted. Then the Engel energy can be described in terms of the local angle functions introduced in Section \ref{sec:universalHole}:

\begin{lemma} \label{lem:EngelEnergyBounds}
Fix the pair $(\SW,\SE)$ and let $\phi:U\longrightarrow\D^3\times[0,1]$ be an adapted chart. Then there exists a strictly positive function $C_{\phi}(p,t):\D^3\times[0,1]\longrightarrow\R^+$ such that
$$ C_{\phi}(p,t) \cdot \SH(\SD)(\phi^{-1}(p,t)) = \partial_tc(\phi_*\SD)(p,t) $$
for any $2$--plane $\SD$ such that $\SW \subset \SD \subset \SE$.\hfill$\Box$
\end{lemma}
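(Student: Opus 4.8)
The statement to prove is Lemma~\ref{lem:EngelEnergyBounds}: for a fixed pair $(\SW,\SE)$ with $(\SE,\SW)$ even--contact and an adapted chart $\phi:U\longrightarrow\D^3\times[0,1]$, there is a strictly positive function $C_\phi(p,t)$ with $C_\phi(p,t)\cdot\SH(\SD)(\phi^{-1}(p,t))=\partial_t c(\phi_*\SD)(p,t)$ for every $2$--plane field $\SD$ with $\SW\subset\SD\subset\SE$.

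\medskip

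The plan is to work entirely inside the chart $\D^3\times[0,1]$, where $\SW=\langle\partial_t\rangle$ and $\SE$ is a fixed even--contact $3$--distribution containing $\partial_t$ whose kernel is $\partial_t$; on each level set $\D^3\times\{t\}$ the plane field $\xi_t=\SE\cap T(\D^3\times\{t\})$ is then a $t$--family of contact structures, but the key point is that since $\SW=\ker(d\alpha|_{\ker\alpha})$ is exactly $\partial_t$, a defining form $\alpha$ for $\SE$ can be taken $t$--independent, so $\xi_t=\xi$ is a \emph{fixed} contact structure. First I would fix, as in Section~\ref{ssec:angular_shell}, an orthonormal Legendrian frame $\{Y,Z\}$ for $(\D^3,\xi)$ with $\{\partial_t,Y,Z\}$ positively oriented; then any admissible $\SD$ is $\langle\partial_t, X\rangle$ with $X=\cos(c)Y+\sin(c)Z$ for its angle function $c=c(\phi_*\SD)$. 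Differentiating along $\SW=\partial_t$ gives $\SL_{\partial_t}X=[\partial_t,X]=\partial_t(c)\,(-\sin(c)Y+\cos(c)Z)$, using that $Y,Z$ are themselves $t$--independent vector fields on $\D^3$. The vector $X^\perp:=-\sin(c)Y+\cos(c)Z$ is the unit vector in $\xi$ orthogonal to $X$ (with the right orientation), so $\SL_{\partial_t}X=\partial_t(c)\,X^\perp$.

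\medskip

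The next step is to compare this with the definition $\SH(\SD)(p)=\langle\SL_W\widetilde X,\widetilde Y\rangle$, where the frame in the definition uses the \emph{euclidean} (or auxiliary) metric $g$: $W\in\SW$, $\widetilde X\in\SD\cap\SW^{\perp_g}$, $\widetilde Y\in\SW^{\perp_g}$ are unit vectors with $\{W,\widetilde X,\widetilde Y\}$ oriented. Here $W=\pm\partial_t/\|\partial_t\|_g$; I would note $\partial_t\in\SW$ is positively oriented on $\SW$ by the orientation conventions (the Legendrian direction of $X$ must rotate towards $Y$, i.e.\ the orientation of $\SE$ is $\{\partial_t,X,[\partial_t,X]\}$), and $\|\partial_t\|_g>0$ is a positive function on $\D^3\times[0,1]$. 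Similarly $\widetilde X$ is a positive unit multiple of $X$ and $\widetilde Y$ is the unit vector in $\SW^{\perp_g}\cap\SE$ orthogonal to $\widetilde X$; both $X^\perp$ and $\widetilde Y$ lie in the oriented $2$--plane $\SE\cap(\ker dt)$... but one must be slightly careful: $\widetilde Y\in\SW^{\perp_g}$, not necessarily tangent to the level sets, so I would instead decompose and use that $\langle\SL_W\widetilde X,\widetilde Y\rangle$ only depends on the component of $\SL_W\widetilde X$ along $\widetilde Y$, and since $\SL_{\partial_t}X$ is proportional to $X^\perp\in\xi$ the positivity of the pairing $\langle X^\perp,\widetilde Y\rangle_g$ follows from both being oriented normals to $\widetilde X$ inside $\SE$ (modulo $\SW$). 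Collecting all the positive scalar factors — $\|\partial_t\|_g^{-1}$, the rescaling from $X$ to $\widetilde X$, and $\langle X^\perp,\widetilde Y\rangle_g$, each a smooth strictly positive function on $\D^3\times[0,1]$ depending only on $\phi$ and the fixed data $(\SW,\SE,g)$ — gives $\SH(\SD)(\phi^{-1}(p,t))=C_\phi(p,t)^{-1}\partial_t c(p,t)$ with $C_\phi>0$, which is the claim.

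\medskip

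The main obstacle I anticipate is the bookkeeping around the two different normal framings — the one tangent to the level sets $\D^3\times\{t\}$ (natural for computing the angle function and $[\partial_t,X]$) versus the $g$--orthogonal one $\SW^{\perp_g}$ (used in the definition of $\SH$) — and checking that the change of frame between them is by a strictly positive factor rather than just a nonzero one; this is where the orientation conventions on $\SE$ (the frame $\{\partial_t, X, [\partial_t,X]\}$) and the fact that $X^\perp$ points in the direction of increasing $c$ must be used carefully to pin down signs. Once that is settled the independence of $C_\phi$ from the choice of $\SD$ is automatic, since $C_\phi$ is assembled purely from $\|\partial_t\|_g$, the metric, and the fixed Legendrian frame $\{Y,Z\}$, none of which involve $\SD$. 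The linearity in $\SD$ of both sides (both are, up to the common factor, the rate of rotation of the Legendrian direction along the flow of $\SW$) is then the conceptual reason the identity holds uniformly.
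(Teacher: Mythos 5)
The paper itself gives no argument for this lemma (it is stated with a closing $\Box$), so the only issue is whether your argument holds. Most of it does: the observation that the flow of $\SW$ preserves the even--contact structure (from $\SL_{\partial_t}\alpha=g\alpha$), so that $\xi_t=\SE\cap T(\D^3\times\{t\})$ and the frame $\{Y,Z\}$ can be chosen $t$--independent, is exactly what guarantees $[\partial_t,X]=\partial_t c\cdot X^{\perp}$ with no spurious frame--rotation term; and your positivity claim for the pairing is correct (writing $\widetilde Y=\alpha\partial_t+\beta X+\gamma X^{\perp}$, positivity of the change--of--basis determinant forces $\gamma>0$, and $\langle X^{\perp},\widetilde Y\rangle_g=1/\gamma$ since $\partial_t$ and $X$ are $g$--orthogonal to $\widetilde Y$).

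The gap is the last step, where you declare the independence of $C_\phi$ from $\SD$ to be ``automatic''. It is not: the vectors $\widetilde X$ and $\widetilde Y$ are built from $X$, so the rescaling factor $\|X-\langle X,W\rangle_g W\|_g^{-1}$ and the pairing $\langle X^{\perp},\widetilde Y\rangle_g$ depend on the direction of $X$, i.e.\ on the value $c(p,t)\bmod 2\pi$, whenever the pushed--forward metric $\phi_*g$ is not the standard product metric on $\langle\partial_t\rangle\oplus\xi$ (recall the angle function is defined with the \emph{euclidean} metric of the chart, while $\SH$ uses the auxiliary metric $g$ of $M$). Concretely, if at a point $\partial_t\perp_g\xi$, $\|\partial_t\|_g=\|Y\|_g=1$, $\|Z\|_g=2$, $Y\perp_g Z$, your own scheme of computation yields $\SH(\SD)=\frac{2}{\cos^2 c+4\sin^2 c}\,\partial_t c$, so no single function of $(p,t)$ works for every $\SD$. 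What the computation actually establishes is $\SH(\SD)(\phi^{-1}(p,t))=F(p,t,c(p,t)\bmod 2\pi)\,\partial_t c(p,t)$ with $F$ smooth and strictly positive; this is the honest content of the lemma and suffices for its use in Theorem \ref{thm:red}, since by compactness $F$ is pinched between positive functions of $(p,t)$ alone. Alternatively, the statement holds verbatim with $C_\phi=\|\partial_t\|_{\phi_*g}$ if one defines the angle function using a $\phi_*g$--orthonormal Legendrian frame with $\xi\perp_g\SW$, for then $\widetilde X=X$ and $\widetilde Y=X^{\perp}$. One of these fixes must be made explicit; as written, the independence is asserted precisely where the only nontrivial bookkeeping lies.
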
 
This concludes the discussion on Engel energy, which is used in the forthcoming proof of Theorem \ref{thm:red}.

\subsection{Proof of Theorem \ref{thm:red}} Consider a formal Engel structure $(\SW_0,\SD_0,\SE_0)$ on a closed 4--fold $M$ and $K\in\R^+$ a constant. By applying Theorem \ref{thm:hprincipleEvenContact} we suppose that the 3--distribution $\SE$ is even--contact and the line field $\SW$ is its kernel. Proposition \ref{prop:triang} provides a triangulation $\Sigma = \{\sigma\}$ and a covering of closed disks $\{S(\sigma)\}$ with useful properties.

The first step is to deform the formal Engel structure $(\SW_0,\SD_0,\SE_0)$ to a formal Engel structure which is Engel near the 3--skeleton $\Sigma^{(3)}$ and contains enough Engel energy; this geometrically translates into the Legendrian vector field rotating sufficiently fast. This is achieved by creating Engel energy inductively on the skeleta of the triangulation $\Sigma$.

\underline{Engel Energy in the lower skeleta.} Consider a positive constant $K_0\in\R^+$. Let us construct a deformation $\SD'$ of $\SD$ satisfying $\SH(\SD')|_{S_3}>K_0$, where we denote
$$S_j:=\bigcup_{\sigma\in\Sigma^{(j)}} S(\sigma).$$
This is achieved by induction over the dimension $j$ of the simplices.

Suppose that $\SD$ has already been deformed on $S_{j-1}$ suitably. For each $j$--simplex $\sigma$, we thicken $S(\sigma)$ into a bigger flowbox and we consider an adapted chart $\phi(\sigma)$ on this thickening, which identifies it with the 4--disk
$$\D^3_{1+\varepsilon} \times [-\varepsilon, 1+\varepsilon],$$
for some small $\varepsilon\in\R^+$, and identifies $S(\sigma)$ with the 4--subdisk $\D^3 \times [0,1]$.

Consider the image through $\phi(\sigma)$ of the finite union $\cup_{\tau\subsetneq\sigma}S(\tau)$. Property (2) of the triangulation $\Sigma$ implies that this closed set can be described as $A\times [-\varepsilon, 1+\varepsilon]$, for some closed set $A$, if the thickening is small enough. The inductive hypothesis $\SH(\SD)|_{S_{j-1}}>K_0$ translates into the inequality
$$\partial_tc(\phi(\sigma)_*\SD)|_{A\times [-\varepsilon, 1+\varepsilon]}>K_0\cdot C_{\phi(\sigma)}|_{A\times [-\varepsilon, 1+\varepsilon]}.$$

Consider a function $h:\D^3_{1+\varepsilon} \times [-\varepsilon, 1+\varepsilon]\longrightarrow\R$ such that
$$h|_{A\times [-\varepsilon, 1+\varepsilon]}=\partial_tc(\phi(\sigma)_*\SD)|_{A\times [-\varepsilon, 1+\varepsilon]}, \quad 
\mbox{ and } h >K_0\cdot C_{\phi(\sigma)}.$$

This function $h$ is the derivative of an angular function for an Engel shell with Engel energy greater than $K_0$, and it agrees with the function $\partial_tc(\phi(\sigma)_*\SD)$ on $S_{j-1}$, where the Engel energy of the 2--distribution $\SD$ is already greater than $K_0$. 

The linear interpolation serves now as the required deformation of $\SD$. In detail, consider a cut--off function $\beta: \D^3_{1+\varepsilon} \times [-\varepsilon,1+\varepsilon] \to [0,1]$ such that
$$\beta|_{\D^3 \times [0,1]}\equiv1,\quad \beta|_{\Op(\partial(\D^3_{1+\varepsilon}\times[-\varepsilon,1+\varepsilon]))}\equiv0,$$
and the angle function $d:\D^3_{1+\varepsilon} \times [-\varepsilon,1+\varepsilon] \to \R$ defined as the linear interpolation
$$d(p,t) = (1-\beta(p,t))c(p,t) + \beta(p,t)\left(c(p,0) + \int_0^t h(p,t) dt\right).$$

Then the two angle functions $c$ and $d$ are isotopic relative to the boundary, and hence $d$ induces a deformation $\SD'$ of the 2--distribution $\SD$ through structures contained in $\SE$ and transverse to $\SW$. By applying this deformation to each $j$--simplex $\sigma\in\Sigma^{(j)}$ and the inductive character of the argument, we obtain a deformation $\SD'$ such that $\SH(\SD')|_{S_3}>K_0$.\hfill$\Box$

This provides a deformation satisfying Property (a) in the statement of Theorem \ref{thm:red}. The second step in the proof of Theorem \ref{thm:red} is thus to translate the Engel energy in the neighbourhood $S_3$ of the 3--skeleton into a $K$--radial shell model for the $4$--cells; note that the constant $K\in\R^+$ is given, whereas the constant $K_0\in\R^+$ in the previous argument can be chosen arbitrarily.

\underline{Engel Energy in the 4-cells.} Consider a $4$--simplex $\sigma\in\Sigma^{(4)}$, a constant $K_0\in\R^+$, and a $2$--plane $\SD$ with $\SW \subset \SD \subset \SE$ such that $\SH(\SD)|_{S_3}>K_0$. Such an $\SD$ exists by the previous inductive argument in the neighbourhood $S_3$.

Property (b) of the triangulation $\Sigma$ ensures that $\partial\sigma\subseteq\cup_{\tau\subsetneq\sigma}S(\tau)$, which implies
$$\partial_tc(\phi(\sigma)_*\SD)\mbox{ }|_{\partial \D^3} > K_0 \cdot C_{\phi(\sigma)}.$$
Choose the constant $K_0\in\R^+$ such that $K_0\cdot\min C_{\phi(\sigma)}>K$: the number of $4$--cells is finite, and thus such a constant $K_0$ exists because the function $C_{\phi(\sigma)}$ is strictly positive. This implies the inequality $c(\phi(\sigma)_*\SD)|_{\partial \D^3} > K$ for the angle function and we can then apply Corollary \ref{cor:dominationSymmetric} to obtain a deformation into a $K$--radial shell. This concludes the proof of Theorem \ref{thm:red}.\hfill$\Box$

\section{Proof of Theorem \ref{thm:main} and its corollaries} \label{sec:proof}

In this section we first detail the proofs of Theorems \ref{thm:main} and \ref{cor:folia}, and then deduce Corollary \ref{cor:cobordism}.

\subsection{Proof of Theorem \ref{thm:main} and Theorem \ref{cor:folia}}\label{ssec:proof}

The $\pi_0$--statement of Theorem \ref{thm:main}, that is, {\em every formal Engel structure can be deformed through formal Engel structures to an Engel structure}, is a consequence of the reduction result Theorem \ref{thm:red} and the extension result Theorem \ref{thm:fill}. Let us introduce the appropriate language for the parametric versions of these results.

Consider a $\S^k$--family of formal foliated Engel structures $(\SW_x,\SD_x,\SE_x)$, $x \in \NS^k$, in a smooth foliated manifold $(M^{m+4}, \SF^4)$. The Cartesian product manifold $W = M \times \NS^k$ is endowed with the product foliation $\SF_W=\coprod_{x \in \NS^k} \SF \times \{x\}$ and then the family $\{(\SW_x,\SD_x,\SE_x)\}_{x \in \NS^k}$ can be understood as a formal foliated Engel structure $(\SW,\SD,\SE)$ in the foliated manifold $(W^{4+m+k}, \SF_W^4)$. Homotoping this formal Engel flag to a genuine Engel flag amounts to deforming the original family of formal foliated Engel structures to a family of genuine foliated Engel structures. 

In consequence, the $\pi_0$--surjectivity of Theorem \ref{cor:folia} applied to the formal foliated Engel structure $(W^{m+4+k},\SF_W^4,\SW,\SD,\SE)$ implies the higher $\pi_k$--surjectivity for the formal foliated Engel structure $(M^{m+4},\SF^4,\SW,\SD,\SE)$. Note that the statement of Theorem \ref{cor:folia} in the case $m=0$ implies Theorem \ref{thm:main}, and thus it suffices to discuss the proof of Theorem \ref{cor:folia}.

The two central ingredients in the proof for the $\pi_0$--surjectivity in Theorem \ref{cor:folia} are Theorem \ref{thm:red} and Theorem \ref{thm:fill} (in order of application). Let us discuss their parametric analogues; the definitions of Engel, angular and $K$--radial shells can be generalized to the foliated case:

\begin{definition}
A formal foliated Engel structure $(\D^3 \times [0,1] \times \D^m,\coprod_{x \in \D^m} \D^3 \times [0,1] \times \{x\};\SW, \SD, \SE)$ is said to be a foliated Engel $($angular or $K$--radial$)$ shell if:
\begin{wideitemize}
\item[a.] $(\D^3 \times [0,1] \times \{x\},\SW,\SD,\SE)$ if an Engel $($angular or $K$--radial$)$ shell for all $x \in \D^m$,
\item[b.] $(\D^3 \times [0,1] \times \{x\},\SW,\SD,\SE)$ is solid for $x \in \Op(\partial \D^m)$.
\end{wideitemize}
A foliated Engel shell is solid if its formal foliated Engel structure is a foliated Engel structure.
\end{definition}
Note that the parameter space in these foliated definitions is the $m$--disk $\D^m$. The parametric generalization of the reduction result Theorem \ref{thm:red} can be stated as follows:

\begin{theorem} \label{prop:redParametric}
Let $(W^{4+m}, \SF^4;\SW_0,\SD_0,\SE_0)$ be a formal foliated Engel structure and $K\in\R^+$ a constant. Then there exists a homotopy of formal foliated Engel structures $(\SW_t,\SD_t,\SE_t)$, $t\in[0,1]$, and a collection of $(4+m)$--disks $B_1,\ldots,B_p\subseteq M$ such that:

\begin{wideitemize}
\item[1.] $(\SW_1,\SD_1,\SE_1)$ is a foliated Engel structure in the complement of $W\setminus\bigcup_{i=1}^p B_i$.
\item[2.] For each $i\in\{1,\ldots,p\}$, $(B_i,\SF|_{B_i};\SW_1,\SD_1,\SE_1)$ is a foliated $K$--radial shell.
\end{wideitemize}
\end{theorem}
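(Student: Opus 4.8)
The plan is to run the proof of Theorem~\ref{thm:red} with the transverse direction of the foliation $\SF$ playing the role of a parameter: every construction there is carried out leafwise, with smooth dependence on the normal coordinate, so that the cell $\D^3\times[0,1]$ is systematically replaced by $\D^3\times[0,1]\times\D^m$, foliated by the slices $\D^3\times[0,1]\times\{x\}$. Three ingredients must be upgraded to foliated form: Theorem~\ref{thm:hprincipleEvenContact}, Proposition~\ref{prop:triang}, and the Engel--energy argument of Subsection~\ref{ssec:3sk} together with Corollary~\ref{cor:dominationSymmetric}. The first is immediate, since the convex--integration proof of Theorem~\ref{thm:hprincipleEvenContact} is parametric and localizes, hence has a foliated version; so we may assume from the outset that $\SE\subseteq T\SF$ is leafwise even--contact with leafwise kernel $\SW$, and all subsequent homotopies will preserve the pair $(\SW,\SE)$.

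The second and central step is a foliated analogue of Proposition~\ref{prop:triang}: a triangulation $\Sigma$ of $W^{4+m}$ together with a family of closed $(4+m)$--disks $\{S(\sigma)\}_{\sigma\in\Sigma}$ satisfying properties (a)--(c) and (1)--(2), where in addition each chart $\phi(\sigma)\colon S(\sigma)\longrightarrow\D^3\times[0,1]\times\D^m$ is at once a flowbox for $\SW$, so $\phi(\sigma)_*\SW=\langle\partial_t\rangle$, \emph{and} a foliation box for $\SF$, carrying the leaves to the slices $\D^3\times[0,1]\times\{x\}$ (which is consistent, $\SW$ being contained in $T\SF$). This is obtained from a foliated version of Thurston's jiggling lemma \cite{Th,Thom}: working inside foliation charts one jiggles so that every simplex is simultaneously transverse to $\SW$ and in general position with respect to $\SF$, and then the flowbox construction of Proposition~\ref{prop:triang} -- exponentiating in the directions orthogonal to $T\SF$, then flowing along $\SW$ -- produces boxes of the required product type; the inductive choice of the radii $(r_0,r_1,r_2)$ is unchanged.

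With such a triangulation the energy argument is verbatim the one in the proof of Theorem~\ref{thm:red}, the parameter $x\in\D^m$ playing no active role. On each flowbox the leafwise formal Engel structure is recorded by an angle function depending smoothly on $x$ as well, its leafwise Engel energy is controlled by the $\partial_t$--derivative of that function through the leafwise form of Lemma~\ref{lem:EngelEnergyBounds} (with $C_\phi$ now depending on $x$ too), and interpolating this derivative with a function $h>K_0\cdot C_\phi$ that agrees with it on the region already treated gives a deformation, through formal foliated Engel structures, making the leafwise energy exceed $K_0$ on a neighbourhood of the $(m+3)$--skeleton. Because the interpolation only raises the leafwise energy, leafwise Engelness is preserved wherever it held. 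Choosing $K_0$ large enough -- there are only finitely many top cells -- and applying the parametric version of Corollary~\ref{cor:dominationSymmetric} on each top cell $S(\sigma)\cong\D^3\times[0,1]\times\D^m$ puts the leafwise structure into $K$--radial form for all $x\in\D^m$; for $x\in\Op(\partial\D^m)$, where the leafwise angle function is already strictly increasing in $t$ (its leafwise energy exceeding $K_0$ there), we choose the dominated $K$--radial model to be strictly increasing as well, so that the shell is solid there by Lemma~\ref{lem:positive}, and the parametric construction interpolates this with the general $K$--radial model used for interior $x$. Since the complement of the resulting disks $B_i$ lies in the neighbourhood of the $(m+3)$--skeleton, on which the structure is genuinely leafwise Engel, conclusion (1) holds.

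The main obstacle is the foliated adapted triangulation of the second step: one has to verify that the nesting properties (c) and (2) of Proposition~\ref{prop:triang} can be achieved simultaneously with the requirement that every $\phi(\sigma)$ be a foliation box for $\SF$ adapted to $\SW$, which is where the interaction between the combinatorics, the line field $\SW$ and the foliation $\SF$ is concentrated. Everything else is the argument of Theorem~\ref{thm:red} with an inert parameter, the only additional care being the smoothness of all cut--off and interpolation data in the transverse variable.
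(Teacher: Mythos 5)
Your proposal follows essentially the same route as the paper: apply the (parametric, leafwise) even--contact $h$--principle, adapt Proposition~\ref{prop:triang} so that each $S(\sigma)\cong\D^3\times[0,1]\times\D^m$ is simultaneously a flowbox for $\SW$ and a foliation chart for $\SF$ (the paper achieves this by exponentiating first inside the leaf and then in the ambient directions), and then run the Engel--energy argument of Theorem~\ref{thm:red} verbatim with the angle functions $c(p,t,x)$ depending smoothly on the transverse variable. Your extra remarks on solidity of the shells for $x\in\Op(\partial\D^m)$ and on the foliated jiggling step only make explicit points the paper leaves implicit, so the argument is correct and matches the paper's proof.
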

\begin{proof}
Theorem \ref{thm:hprincipleEvenContact} provides a deformation of the formal foliated Engel structure $(\SW_0,\SD_0,\SE_0)$ into a formal foliated Engel structure such that $\SE$ is a leafwise even--contact structure and $\SW$ is its leafwise kernel. Proposition \ref{prop:triang} applied to the pair $(W,\SW)$ provides a triangulation $\Sigma$ and an associated cover by sets $\{S(\sigma)\}_{\sigma \in \Sigma}$ such that the closed neighbourhoods $S(\sigma)$ are of the form $\D^3 \times [0,1] \times \D^m$, and are at the same time flowboxes for the line field $\SW$ and foliated charts for the foliation $\SF$. This can be achieved by requiring in its proof that we first follow the exponential flow in the leaf and then in the ambient manifold.

The proof for the non--parametric case works verbatim by observing that in each closed neighbourhood $S(\sigma)$, the angular functions of the leafwise Engel structures can be described by a smooth function
$$c(p,t,x):\D^3\times[0,1]\times\D^m\longrightarrow\R$$
to which the deformations in the non--parametric Theorem \ref{thm:red} can be applied.
\end{proof}
The foliated generalization of the extension result Theorem \ref{thm:fill} reads as follows:
\begin{theorem} \label{prop:fillParametric}
A foliated $6\pi$--radial shell is homotopic through foliated Engel shells to a solid foliated Engel shell.
\end{theorem}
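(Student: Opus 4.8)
The plan is to reduce Theorem~\ref{prop:fillParametric} to the non--parametric Theorem~\ref{thm:fill} by a careful unpacking of the definitions, exploiting the fact that every construction in the proof of Theorem~\ref{thm:fill} was already written so as to depend smoothly on auxiliary parameters. First I would note that a foliated $6\pi$--radial shell on $\D^3\times[0,1]\times\D^m$ is, by definition, a $\D^m$--family of $6\pi$--radial shells $\SD_x$ on $\D^3\times[0,1]$ which are moreover solid for $x\in\Op(\partial\D^m)$. Associated to such a family is a smooth angle function $c(p,t,x):\D^3\times[0,1]\times\D^m\longrightarrow\R$, exactly as in the proof of Theorem~\ref{prop:redParametric}, and correspondingly a $\D^m$--family of $6\pi$--Engel combs $\gamma_{r,x}$ via Equation~\eqref{eq:radialComb}, with $\gamma_{r,x}$ a tame (indeed solid) comb for $x$ near $\partial\D^m$.

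Next I would run the proof of Theorem~\ref{thm:fill} fibrewise over $\D^m$, checking that each ingredient is already parametric. The kink curves $\beta_\theta$ and the four--leaf clover $\kappa$ are fixed curves, so no parameters intrude there; Lemma~\ref{lem:kink2clover} gives a fixed homotopy $\tau_s$. The three technical lemmas~\ref{lem:glueing1}, \ref{lem:glueing2}, \ref{lem:stretch} are each stated for families of curves indexed by a compact parameter space $K$ (Lemma~\ref{lem:glueing1} explicitly; the other two extend verbatim), so they apply with $K=\D^m$, or rather with the parameter space of the proof of Theorem~\ref{thm:fill} enlarged by the extra factor $\D^m$. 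Thus the $[0,3]$--family of curves $f_s$ built in the proof of Theorem~\ref{thm:fill} becomes a $[0,3]\times\D^m$--family $f_{s,x}$, and the cut--off $\chi:[0,1]\to[0,3]$ and the constant $C<0$ can be chosen uniformly in $x$ since $\D^m$ is compact and the relevant quantities ($\min_{\partial\D^3}(c(p,1,x)-c(p,0,x))$, the region where $c$ is increasing in $t$, the positive functions $C_\phi$) vary continuously and are bounded below. Inserting $f_{\chi(r),x}$ into $\gamma_{r,x}^c$ on $t\in[\rho,2\rho]$ and performing the glueing and stretching produces a $\D^m$--family of tame Engel combs $\Gamma^{\widetilde c}_{r,x}$, homotopic through foliated Engel shells to the original family, and Proposition~\ref{prop:solidTameComb} applied leafwise makes the result solid; Lemma~\ref{lem:positive} applied leafwise in $t\in[2\rho,1]$ finishes, once $C$ is chosen with $\widetilde c(p,2\rho,x)<\widetilde c(p,1,x)$ for all $(p,x)$.

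The one point that needs genuine care, and which I expect to be the main obstacle, is the boundary condition~(b) in the definition of a foliated shell: for $x\in\Op(\partial\D^m)$ the shell $\SD_x$ is already \emph{solid}, and the deformation must be supported away from $\partial\D^m$, i.e.\ it must be the constant (trivial) homotopy near $\partial\D^m$. This is handled by composing the cut--off $\chi$ with a further cut--off in the $x$--variable: one chooses a function $\psi:\D^m\to[0,1]$ equal to $1$ on a slightly smaller disk and equal to $0$ near $\partial\D^m$, and replaces the deformation parameter $s\in[0,3]$ by $s\cdot\psi(x)$ (more precisely one interpolates the whole family $f_{s,x}$ back to $f_{0,x}=\gamma^c_{r,x}|_{[\rho,2\rho]}$ as $x\to\partial\D^m$). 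Since for $x$ near $\partial\D^m$ the original $\gamma^c_{r,x}$ is already tame, this truncated deformation is still a homotopy \emph{through foliated Engel shells} and is stationary near $\partial\D^m$, so the result is again solid there. With this the proof is complete: the fibrewise mechanism of Theorem~\ref{thm:fill}, combined with the compactness of $\D^m$ to obtain uniform constants and the extra $x$--cut--off to respect the boundary, yields the required homotopy through foliated Engel shells to a solid foliated Engel shell. \hfill$\Box$
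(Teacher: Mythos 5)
Your proposal is correct and drives the same machine as the paper --- a fibrewise application of the construction of Theorem \ref{thm:fill} --- but it does noticeably more work than the paper's own proof, which rests on a single observation you did not exploit: because every leaf is a $6\pi$--\emph{radial} shell, the angle function on $t\in[\rho,2\rho]$ is the fixed model $c(p,t,x)-c(p,\rho,x)=6\pi(t-\rho)/\rho$ for every $p\in\D^3$ and every $x\in\D^m$, so the kink/clover deformation inserted there is literally one and the same family of curves on all leaves; hence no parametric versions of Lemmas \ref{lem:glueing1}--\ref{lem:stretch} and no additional parameter dependence are needed at all. Your route of carrying $x$ through the technical lemmas (Lemma \ref{lem:glueing1} is indeed stated for compact parameter spaces) and extracting uniform constants by compactness is valid, but it obscures the reason the reduction to radial shells was performed before this step. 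Two remarks on your handling of condition (b): a homotopy through foliated Engel shells is not required to be stationary near $\partial\D^m$, only to preserve solidity there at every stage, and the uniform, parameter--independent deformation already does so --- on $[0,2\rho]$ the intermediate curves are convex, $\infty$--tangent to the equator, or run along it, all of which are Engel by Proposition \ref{prop:EngelCurves}, while on $[2\rho,1]$ the glueing at $t=2\rho$ is a positive rotation, so an increasing angle function stays increasing --- so your extra cut--off $\psi$ is unnecessary. If you do keep it, be aware that for $x$ in the transition annulus the clockwise pulling is only partially (or not at all) applied, so the inequality $\widetilde c(p,2\rho,x)<\widetilde c(p,1,x)$ cannot be arranged there by the choice of $C$; it holds because the original shell is already solid on $\Op(\partial\D^m)$, which forces the support of $1-\psi$ to lie inside that region --- your write--up gestures at this but should state it explicitly rather than attributing the inequality for all $(p,x)$ to the choice of $C$.
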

\begin{proof}
Since all the $6\pi$--radial shells have the same model in the interval $t \in [\rho,2\rho]$, the construction in Theorem \ref{thm:fill} can be applied without introducing additional parameters and we obtain Engel shells with four--leaf clover curves in the interval $t \in [\rho,2\rho]$.
%
%
\end{proof}
Theorem \ref{prop:redParametric} and Theorem \ref{prop:fillParametric} imply the $\pi_0$--statement of Theorem \ref{cor:folia}, which suffices to prove Theorem \ref{thm:main} and the remaining $\pi_k$--surjectivity in Theorem \ref{cor:folia}.\hfill$\Box$

\subsection{Proof of Corollary \ref{cor:cobordism}}
This cobordism statement requires a proof of the reduction Theorem \ref{thm:red} with a relative character; once a relative reduction can be performed, Theorem \ref{thm:fill} implies the statement. Let us explain the relative reduction.

Consider a collar neighbourhood $\Op(\partial M)\cong\partial M \times [0,1)$ and thicken the filling $M$ to
$$\overline{M}:=M\cup_{\partial M\times\{0\}}\partial M\times[-\varepsilon,0];$$
this allows us to modify the formal Engel structure in $\Op(\partial M\times\{0\})$ as an interior open set of the manifold $\overline{M}$. Triangulate $\partial{M}$ and extend this triangulation to the interior of $M$. Proposition \ref{prop:triang} also holds restricted to triangulations of this form, because the simplices contained in the boundary $\partial{M}$ are already transverse to the triangulation and Thurston's Jiggling Lemma has a relative character. This provides suitable neighbourhoods $S(\sigma) \subseteq \overline{M}$ for each simplex $\sigma$ of the triangulation. Then the rest of the proof of Theorem \ref{thm:red} goes through and provides an Engel structure in a neighbourhood $\Op(M)\subseteq\overline{M}$. By construction, the Engel structure close to $\partial M$ is still an angular model that induces the given contact structure.\hfill$\Box$

\end{document}